\newdimen\bibspace
\renewenvironment{thebibliography}[1]{%
 \section*{\refname 
       \@mkboth{\MakeUppercase\refname}{\MakeUppercase\refname}}%
     \list{\@biblabel{\@arabic\c@enumiv}}%
          {\settowidth\labelwidth{\@biblabel{#1}}%
           \leftmargin\labelwidth
           \advance\leftmargin\labelsep
           \itemsep\bibspace
           \parsep\z@skip     %
           \@openbib@code
           \usecounter{enumiv}%
           \let\p@enumiv\@empty
           \renewcommand\theenumiv{\@arabic\c@enumiv}}%
     \sloppy\clubpenalty4000\widowpenalty4000%
     \sfcode`\.\@m}
    {\def\@noitemerr
      {\@latex@warning{Empty `thebibliography' environment}}%
     \endlist}
\newtheorem{thm}{Theorem}[section]
\newtheorem{lem}[thm]{Lemma}
\newtheorem{prop}[thm]{Proposition}
\newtheorem{rem}[thm]{Remark}
\newtheorem*{rem*}{Remark}
\def\Xint#1{\mathchoice
  {\XXint\displaystyle\textstyle{#1}}%
  {\XXint\textstyle\scriptstyle{#1}}%
  {\XXint\scriptstyle\scriptscriptstyle{#1}}%
  {\XXint\scriptscriptstyle\scriptscriptstyle{#1}}%
  \!\int}
\def\XXint#1#2#3{{\setbox0=\hbox{$#1{#2#3}{\int}$}
  \vcenter{\hbox{$#2#3$}}\kern-.5\wd0}}
\def\dashint{\Xint-}
\newcommand{\al}{\alpha}                \newcommand{\lda}{\lambda}
\newcommand{\om}{\Omega}                \newcommand{\pa}{\partial}
\newcommand{\va}{\varepsilon}           \newcommand{\ud}{\mathrm{d}}
\newcommand{\be}{\begin{equation}}      \newcommand{\ee}{\end{equation}}
\newcommand{\w}{\omega}                 
\newcommand{\Lda}{\Lambda}              \newcommand{\B}{\mathcal{B}}
\newcommand{\R}{\mathbb{R}}
\begin{document}

\title{\textbf{Asymptotic symmetry and local behavior of solutions of higher order conformally invariant equations with isolated singularities}
\bigskip}

\author{\medskip  Tianling Jin\footnote{T. Jin was partially supported by Hong Kong RGC grants ECS 26300716 and GRF 16302217.}\quad and \quad
Jingang Xiong\footnote{J. Xiong was partially supported by NSFC 11501034, a key project of NSFC 11631002 and NSFC 11571019.}}

\date{\today}

\fancyhead{}
\fancyhead[CO]{\textsc{Conformally invariant equations with isolated singularities}}
\fancyhead[CE]{\textsc{Tianling Jin and Jingang Xiong}}

\fancyfoot{}
\fancyfoot[CO, CE]{\thepage}

\renewcommand{\headrule}{}

\maketitle

\begin{abstract} We prove sharp blow up rates of solutions of higher order conformally invariant equations in a bounded domain with an isolated singularity, and show the asymptotic radial symmetry of the solutions near the singularity. This is an extension of the celebrated theorem of Caffarelli-Gidas-Spruck for the second order Yamabe equation with isolated singularities to higher order equations.  Our approach uses blow up analysis for local integral equations, and is unified for all critical elliptic equations of order smaller than the dimension. We also prove the existence of Fowler solutions to the global equations, and establish a sup$\,*\,$inf type Harnack inequality of Schoen for integral equations.
\end{abstract}


\section{Introduction}

In the classical paper  \cite{CGS}, Caffarelli, Gidas and Spruck proved asymptotic symmetry of positive smooth solutions to the critical semilinear elliptic equation with an isolated singularity
\be\label{eq:f-2}
-\Delta u=n(n-2)u^{\frac{n+2}{n-2}} \quad \mbox{in }B_R\setminus \{0\},
\ee
where $n\ge 3$, $B_R\subset \R^n$ is the open ball centered at the origin $\{0\}$ with radius $R>0$, and $\Delta$ is the Laplacian operator. More precisely, they proved
\begin{itemize}
\item[(i).] If $R=\infty$, i.e., $B_R=\R^n$, and the singularity $\{0\}$ is removable, then
\[
u(x)=\left(\frac{\lda}{1+\lambda^2 |x-x_0|^2}\right)^{\frac{n-2}{2}} \quad \mbox{for some }  \lambda>0, ~x_0\in \R^n.
\]

\item[(ii).] If $R=\infty$, i.e., $B_R=\R^n$, and $u $ is singular at the origin, then $u$ is radially symmetric, and all solutions are classified by ODE analysis, which are usually called the Fowler solutions or Delaunay type solutions.

\item[(iii).] If $R<\infty$ and $u$ is singular at the origin, then
\[
u(x)=\bar u(|x|)(1+O(|x|)) \quad \mbox{as }x\to 0
\]
with $
\bar u(|x|)= \dashint_{\partial B_{|x|}(0)} u(|x|\theta)\,\ud \theta$ being  the spherical average of $u$ over the sphere $\partial B_{|x|}(0)$. Furthermore, there
exists a Fowler solution $u^*$ such that
\begin{equation}\label{caf-r-1}
u(x)=u^*(|x|)(1+O(|x|^{\al}))\quad \mbox{as }x\to 0
\end{equation}
for some $\al>0$.
\end{itemize}

The motivation and importance of studying the solutions of \eqref{eq:f-2} and characterizing the singular set of $u$ were highlighted in the classical work of Schoen and Yau \cite{Schoen, SYau} on conformally flat manifolds and the Yamabe problem. Subsequent to \cite{CGS}, there have been many papers related to this result, including Chen-Lin \cite{ChLin, ChenLin2}, C. Li \cite{Lc}, Korevaar-Mazzeo-Pacard-Schoen \cite{KMPS}, Marques \cite{marques-1}, among others. In particular, \cite{KMPS} gives a proof of \eqref{caf-r-1} and provides an expansion of $u$ after the order of $u^*$. The main theorems of \cite{CGS} and \cite{KMPS}  were completely extended  to solutions of second order fully nonlinear Yamabe equations by Li-Li \cite{LL}, Y.Y. Li \cite{Li06}, Chang-Han-Yang \cite{CHY}  and Han-Li-Teixeira \cite{HLT}.

There are also works on studying similar problems for singular solutions of higher order conformally invariant equations with isolated singularities. The first part of the results in \cite{CGS}, i.e., the $R=\infty$ case with $\{0\}$ being removable, was extended to  fourth order equations by Lin \cite{Lin}, to other higher order equations by Wei-Xu \cite{WX}, and to conformally invariant integral equations by Chen-Li-Ou \cite{CLO} and Y.Y. Li \cite{Li04}. 

The second part of the results in \cite{CGS}, i.e., the $R=\infty$ case with $\{0\}$ being not removable, has also been extended to fourth order equations. It was known from  Lin \cite{Lin} that all the singular solutions of $\Delta^2u=u^{\frac{n+4}{n-4}}$ in $\R^n\setminus\{0\}$, $n\ge 5$, are radially symmetric. Recently, Frank-K\"onig \cite{FK} classified all the singular solutions, i.e., the Fowler solutions, by ODE analysis. See also some earlier results (including the existence of Fowler solutions, among others) by Guo-Huang-Wang-Wei \cite{GHWW}.

In this paper, we are interested in the third part of the results in \cite{CGS}, i.e., the $R<\infty$ case with $\{0\}$ being not removable, for higher order equations.

The first main result of the paper is as follows.

\begin{thm}\label{thm:higher-order} Suppose  that $1\le m<n/2$ and $m$ is an integer.   Let $u\in C^{2m}(B_1\setminus \{0\})$ be a positive solution of
\begin{align} 
(-\Delta)^m u&= u^{\frac{n+2m}{n-2m}}\quad \mbox{in }B_1\setminus \{0\}.\label{eq:h-1}
\end{align}
Suppose 
\begin{align} 
(-\Delta)^k u &\ge 0 \quad \mbox{in }B_1\setminus \{0\} , \quad  k=1,\dots, m-1. \label{eq:h-2}
\end{align}
Then either $0$ is a removable singularity of $u$, or there exists $C>0$ such that
\be\label{eq:blowuprate0}
\frac{1}{C}|x|^{-\frac{n-2m}{2}} \le u(x) \le C |x|^{-\frac{n-2m}{2}},
\ee
and
\be\label{eq:asymptoticsymmetry0}
u(x)= \bar u(|x|)(1+O(|x|)) \quad \mbox{as }x\to 0.
\ee
\end{thm}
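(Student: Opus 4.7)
I would follow the framework of Caffarelli-Gidas-Spruck, but at every step where they use the second-order maximum principle I would substitute a positive integral representation, so that the argument works uniformly for all integers $m$ with $2m<n$. The super-polyharmonic hypothesis \eqref{eq:h-2} plays the role of the maximum principle.

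\textbf{Step 1 (local integral equation).} Set $v_k=(-\Delta)^k u$, so $v_0=u$, $v_m=u^{(n+2m)/(n-2m)}$, and $v_1,\dots,v_{m-1}\ge 0$. Iteratively representing $-\Delta v_{k-1}=v_k$ on a ball $B_r\subset B_1$, using the positivity of each $v_{k-1}$ to absorb the boundary data into a regular part, I would obtain
\[
u(x)=\int_{B_r}G_m(x,y)\,u(y)^{(n+2m)/(n-2m)}\,dy+h(x),\qquad x\in B_{r/2}\setminus\{0\},
\]
with a positive kernel $G_m(x,y)$ comparable to $|x-y|^{2m-n}$ and $h\in C^\infty(B_{r/2})$. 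A Brezis-Lions type estimate on this representation also gives $u^{(n+2m)/(n-2m)}\in L^1(B_r)$.

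\textbf{Step 2 (sharp two-sided bound).} To prove the upper bound in \eqref{eq:blowuprate0} by contradiction, assume $\sup_{B_r\setminus\{0\}}|x|^{(n-2m)/2}u(x)=\infty$. The Pol\'a\v{c}ik-Quittner-Souplet doubling lemma applied to $|x|^{(n-2m)/2}u(x)$ produces centers $x_j\to 0$ with $\lambda_j:=u(x_j)^{-2/(n-2m)}\to 0$ along which the rescalings
\[
U_j(y)=\lambda_j^{(n-2m)/2}u(x_j+\lambda_j y)
\]
are uniformly bounded on compact sets and satisfy the rescaled form of the equation in Step 1. Passing to the limit yields a positive entire solution $U$ of $(-\Delta)^m U=U^{(n+2m)/(n-2m)}$ on $\R^n$ with $(-\Delta)^k U\ge 0$; the classification of Chen-Li-Ou and Wei-Xu forces $U$ to be the standard bubble, with a definite Sobolev mass $S_0>0$. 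Choosing the $x_j$ on a geometric scale makes the concentration balls essentially disjoint, which contradicts the finiteness of $\int_{B_r}u^{2n/(n-2m)}$. The matching lower bound then follows from the sup$\,\cdot\,$inf Harnack inequality for the integral equation (announced in the abstract and proved later in the paper) applied to annular shells $\{|x|\sim 2^{-j}\}$.

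\textbf{Step 3 (asymptotic symmetry and main obstacle).} With $u\asymp|x|^{-(n-2m)/2}$ in hand, I would apply the integral version of the method of moving spheres, in the spirit of Y.Y.\ Li, centered at each $x\in B_{1/2}\setminus\{0\}$, comparing $u$ to its Kelvin transform
\[
u_{x,\lambda}(y)=\left(\frac{\lambda}{|y-x|}\right)^{n-2m}u\!\left(x+\frac{\lambda^2(y-x)}{|y-x|^2}\right)
\]
inside the integral equation of Step 1. Conformal covariance of the critical nonlinearity makes the principal part monotone in $\lambda$, while the smooth $h$ and the boundary terms, thanks to the blow-up rate, yield errors of size $O(|x|)$ relative to the leading profile and are absorbable. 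The resulting stopping radius $\lambda(x)\ge |x|(1-C|x|)$ translates into \eqref{eq:asymptoticsymmetry0}. The principal obstacle is Step 2: to ensure that the blow-up limit $U$ is the \emph{regular} entire bubble rather than a Fowler-type singular solution, one must use the doubling lemma (so that the rescaled domain exhausts $\R^n$ around a regular point of the limit) instead of naive maximum-point rescaling. A secondary obstacle is making the moving-spheres step quantitative enough to yield the $O(|x|)$ remainder in \eqref{eq:asymptoticsymmetry0}; this relies on a Lipschitz estimate for $h$ with constant controlled by $\|u^{(n+2m)/(n-2m)}\|_{L^1(B_r)}$, which is precisely where the super-polyharmonic hypothesis \eqref{eq:h-2} enters most essentially.
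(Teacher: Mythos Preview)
Your Step 1 (reduction to a local integral equation with positive smooth remainder $h$) matches the paper's Lemma \ref{lem:integrable} and the subsequent reduction to Theorem \ref{thm:A}; this is correct. Step 3 (moving spheres on the integral equation to get the $O(|x|)$ symmetry) is also essentially the paper's Proposition \ref{prop:symmetry}.

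The genuine gaps are in Step 2. For the \emph{upper bound}, your energy-contradiction argument needs a scale-invariant quantity: the only candidate is $\int u^{2n/(n-2m)}$, but you have no a priori reason for this to be finite (Lemma \ref{lem:integrable} gives only $u^{(n+2m)/(n-2m)}\in L^1$, and even the weighted improvement there falls short). Under rescaling by $\lambda_j$, the $L^{(n+2m)/(n-2m)}$-mass of one bubble is $\sim\lambda_j^{(n-2m)/2}\to 0$, so disjoint bubbles do not accumulate and no contradiction arises. The paper blows up at the same doubling centers, but obtains the contradiction differently (Proposition \ref{prop:upbound}): it runs the moving-sphere method on the approximating sequence $w_j$ in the integral equation, carefully tracking the error term $J(\lambda,w_j,y)$ coming from the complement of the rescaled domain, to conclude $w_\lambda\le w$ for every $\lambda>0$, which is incompatible with $w$ being a bubble.

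For the \emph{lower bound}, the $\sup*\inf$ Harnack inequality points the wrong way: Theorem \ref{thm:C} gives $\sup_{B_R}u\cdot\inf_{B_{2R}}u\le CR^{2\sigma-n}$, an \emph{upper} bound on the product, so it cannot force $u\gtrsim|x|^{-(n-2m)/2}$. The paper instead proves the dichotomy via a Pohozaev identity (Propositions \ref{prop:pohozaev} and \ref{prop:zero}): assuming $\liminf_{x\to 0}|x|^{(n-2\sigma)/2}u=0$ along $r_i\to 0$, one rescales $\varphi_i(y)=u(r_iy)/u(r_ie_n)$ and shows $\varphi_i\to \tfrac12|y|^{2\sigma-n}+\tfrac12$; plugging this limit into the scale-invariant Pohozaev boundary functional forces an identity that is explicitly checked to fail, so in fact $\liminf>0$, and the lower bound then follows from the annular Harnack inequality of Lemma \ref{lem:harnack}. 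This Pohozaev step is what your outline is missing, and it is also the place where the additional hypothesis $(-\Delta)^\sigma h=0$ in Theorem \ref{thm:A} is actually used.
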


\begin{rem}\label{rem:signconditions}
If a positive solution $u$ of \eqref{eq:h-1}  satisfies  either the lower bound in \eqref{eq:blowuprate0}, or the upper bound in \eqref{eq:blowuprate0} and that $0$ is non-removable, then the sign assumptions \eqref{eq:h-2} hold in a small punctured ball $B_\tau\setminus\{0\}$ with some $\tau>0$. See Proposition \ref{prop:sufficientcondition1} and Proposition \ref{prop:sufficientcondition2}.
 \end{rem}
 
Singular local solutions of $\Delta^2 u=u^p$ has been studied by Soranzo \cite{S}  with $p<\frac{n}{n-4}$ which extends earlier results of Brezis-Lions \cite{BL} and Lions \cite{Lions} for $m=1$, and has been studied by Yang \cite{YangHui} with $\frac{n}{n-4}<p<\frac{n+4}{n-4}$ which extends several results of  Gidas-Spruck \cite{GS} for $m=1$.

The equation \eqref{eq:h-1} is conformally invariant in the sense that if $u$ is a solution then its Kelvin transform
\begin{equation}\label{eq:kelvintransform}
u_{x,\lda }(\xi)= \left(\frac{\lda}{|\xi-x|}\right)^{n-2m} u\left( x+\frac{\lda^2(\xi-x)}{|\xi-x|^2}\right)
\end{equation}
is also a solution in the corresponding region. In the classification of entire solutions of \eqref{eq:h-1} in $\R^n$ by Lin \cite{Lin} and Wei-Xu \cite{WX}, one important step is to show that every entire positive solution of \eqref{eq:h-1} satisfies the Laplacian sign conditions \eqref{eq:h-2}. This implies that the sign conditions \eqref{eq:h-2} are kept under the Kelvin transforms \eqref{eq:kelvintransform} for entire solutions. These sign conditions will ensure the  maximum principle and are essential for applying the moving plane method. See also Gursky-Malchoidi \cite{GM} and Hang-Yang \cite{HY} for the recent progresses on the fourth order Q-curvature problem on Riemannian manifolds, where the positivity of the scalar curvature and the positivity of the Yamabe invariant are assumed, respectively. However, in our local situation \eqref{eq:h-1}, the sign conditions \eqref{eq:h-2} may change when performing the Kelvin transforms \eqref{eq:kelvintransform}.  We also note when $m=2$, the positivity of the scalar curvature of the metric $u^{\frac{4}{n-4}}\delta_{ij}$, which implies $-\Delta u>0$, is not invariant under the Kelvin transform \eqref{eq:kelvintransform}, either.

 We overcome this difficulty by rewriting the differential equations \eqref{eq:h-1} into the integral equation \eqref{eq:main-3} below, and we will work in the integral equation setting. This is inspired by a unified approach to the Nirenberg problem and its generalizations studied by the authors with Y.Y. Li in \cite{JLX}.

Assume that the dimension $n\ge 1$  and $0<\sigma<n/2$ is a real number. We consider the integral equation involving the Riesz potential
\be\label{eq:main-3}
u(x)=\int_{B_1}\frac{u(y)^{\frac{n+2\sigma}{n-2\sigma}}}{|x-y|^{n-2\sigma}}\,\ud y+h(x), \quad  x\in B_1\setminus\{0\}, \quad u>0,
\ee
where $u\in L^{\frac{n+2\sigma}{n-2\sigma}}(B_1)\cap C(B_1\setminus\{0\})$ and $h\in C^1(B_1)$ is a positive function. Under the assumptions of Theorem \ref{thm:higher-order}, one can show $u \in L^{\frac{n+2\sigma}{n-2\sigma}}_{loc}(B_1)$ and  can rewrite the equation \eqref{eq:h-1} as \eqref{eq:main-3} (after some scaling, see details in Section \ref{sec:poly}).

Our second main result is for the solutions of \eqref{eq:main-3}. Note that whenever we say $h$ is a solution of $(-\Delta)^\sigma h=0$ in some open set $\om$ for any non-integer $\sigma$, we assume that $h$ is defined in $\R^n$, is smooth in $\om$, and
\begin{equation}\label{eq:assumptionsh}
h\in W^{2m,1}_{loc}(\R^n) \mbox{ satisfying}\ \int_{\R^n}\frac{|\nabla^k h(y)|}{1+|y|^{n+2\sigma-k}}\,\ud y<\infty\ \mbox{for all }k=0,1,\cdots,2m,
\end{equation}
where $m=[\sigma]$ is the integer part of $\sigma$. 

\begin{thm}\label{thm:A} Suppose $n\ge 1$, $0<\sigma<n/2$, $u\in C(B_1\setminus \{0\}) \cap L^{\frac{n+2\sigma}{n-2\sigma}}(B_1)$ is a positive solution of \eqref{eq:main-3}, and $h\in C^1(B_1)$ is a positive function.
Then
\[
u(x)\le C_1 |x|^{-\frac{n-2\sigma}{2}}, \quad x\in B_{1/2},
\]
for some constant $C_1>0$, and
\[
u(x)= \bar u(|x|)(1+O(|x|)), \quad \mbox{as }x\to 0.
\]
Suppose in addition that $h$ is smooth in $B_1$,  $h$ is a positive function in $\R^n$ satisfying \eqref{eq:assumptionsh} if $\sigma$ is not an integer, and $h$ satisfies  $(-\Delta)^\sigma h=0$ in $B_1$.  Then either $0$ is a removable singular point of $u$ and $u\in C^\infty(B_1)$, or
\[
 u(x)\ge \frac{1}{C_2} |x|^{-\frac{n-2\sigma}{2}}
\]
for  some positive constant $C_2>0$. 
\end{thm}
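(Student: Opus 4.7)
The overall plan is to work entirely at the level of the integral equation \eqref{eq:main-3} and adapt the Caffarelli-Gidas-Spruck template to the nonlocal setting by combining blow-up analysis with the classification of positive entire solutions of the critical Riesz-potential equation on $\R^n$ (Chen-Li-Ou, Y.Y.~Li) and with the integral form of the moving sphere method. Since $h\in C^1(B_1)$ is bounded and positive, it behaves as an $O(|x|)$ perturbation near the origin, which is exactly the error in \eqref{eq:asymptoticsymmetry0}.

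\textbf{Upper bound.} I would argue by contradiction: suppose some sequence $x_j\to 0$ satisfies $u(x_j)|x_j|^{(n-2\sigma)/2}\to\infty$. A standard doubling/selection step produces near-maximum points $\tilde x_j\to 0$ and scales $\lambda_j= u(\tilde x_j)^{-2/(n-2\sigma)}$ with $\lambda_j\ll|\tilde x_j|$ on which $u\le 2u(\tilde x_j)$. The rescalings $v_j(y)=\lambda_j^{(n-2\sigma)/2}u(\tilde x_j+\lambda_j y)$ satisfy an integral equation whose $h$-contribution $\lambda_j^{(n-2\sigma)/2}h(\tilde x_j+\lambda_j y)$ tends to $0$ and whose domain exhausts $\R^n$; the global integrability $u\in L^{(n+2\sigma)/(n-2\sigma)}(B_1)$ controls the Riesz tail. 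The limit is a positive entire solution of the critical equation normalized by $v(0)=1$, hence a bubble by Chen-Li-Ou. Writing $p=(n+2\sigma)/(n-2\sigma)$, for any fixed $R>0$ the finite positive integral $\int_{B_R}v^{p}\,\ud z$ forces $\int_{B_{R\lambda_j}(\tilde x_j)}u^p\,\ud x$ to stay above a positive constant for $j$ large, contradicting absolute continuity of the finite measure $u^p\,\ud x$ on $B_1$ since $|B_{R\lambda_j}(\tilde x_j)|\to 0$.

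\textbf{Asymptotic symmetry.} With the upper bound I would execute the integral moving-sphere method centered at points $x$ near $0$, writing $\eta^{x,\lambda}:=x+\lambda^2(\eta-x)/|\eta-x|^2$ for the Kelvin inversion. The identity
\[
u(\xi)-u_{x,\lambda}(\xi)=\int_{B_1\setminus B_\lambda(x)}\!\Bigl(\tfrac{1}{|\xi-\eta|^{n-2\sigma}}-\bigl(\tfrac{\lambda}{|\eta-x|}\bigr)^{n-2\sigma}\tfrac{1}{|\xi-\eta^{x,\lambda}|^{n-2\sigma}}\Bigr)\bigl(u(\eta)^{p}-u_{x,\lambda}(\eta)^{p}\bigr)\ud\eta+(h-h_{x,\lambda})(\xi),
\]
combined with the pointwise nonnegativity of the kernel difference on the inverted region, lets one start the comparison $u\ge u_{x,\lambda}$ at all sufficiently small $\lambda$ (by the upper bound) and push $\lambda$ up to a critical scale $\bar\lambda(x)\asymp|x|$. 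The $C^1$ regularity of $h$ makes the inhomogeneous term $(h-h_{x,\lambda})(\xi)$ of order $O(|x|)$ uniformly on the relevant annulus. Angular averaging of the resulting pointwise comparison then yields \eqref{eq:asymptoticsymmetry0}.

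\textbf{Dichotomy under $\sigma$-harmonic $h$.} Under the added hypothesis, \eqref{eq:main-3} is equivalent to $(-\Delta)^\sigma u=c_{n,\sigma}u^{p}$ in $B_1\setminus\{0\}$, and standard nonlocal regularity makes $u$ smooth wherever it is bounded. Suppose the lower bound fails along $y_j\to 0$; combined with the asymptotic symmetry this forces $\bar u(r_j)\,r_j^{(n-2\sigma)/2}\to 0$ along a sequence of radii, and the sup$\,*\,$inf Harnack inequality announced in the abstract propagates this to all radii, so $u(x)=o(|x|^{-(n-2\sigma)/2})$ near $0$. A Brezis-Lions type bootstrap applied to \eqref{eq:main-3} then upgrades $u$ to $L^\infty_{\mathrm{loc}}(B_1)$, and regularity removes the singularity. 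The main obstacle is the sup$\,*\,$inf inequality itself: formulating and proving a Schoen-type result at the level of the integral equation in the presence of the inhomogeneity $h$ is exactly what departs from the classical local template, and I expect this to occupy the technical heart of this part of the proof.
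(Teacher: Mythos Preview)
Your outline has two genuine gaps, one in each of the endpoints of the argument.

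\textbf{Upper bound.} The mass argument does not close. With $p=\frac{n+2\sigma}{n-2\sigma}$ and $v_j(y)=\lambda_j^{(n-2\sigma)/2}u(\tilde x_j+\lambda_j y)$ one computes
\[
\int_{B_{R\lambda_j}(\tilde x_j)} u^{p}\,\ud x=\lambda_j^{\,n-\frac{n+2\sigma}{2}}\int_{B_R} v_j^{\,p}\,\ud y=\lambda_j^{\frac{n-2\sigma}{2}}\int_{B_R} v_j^{\,p}\,\ud y\longrightarrow 0,
\]
so concentrating bubbles carry \emph{vanishing} $L^{p}$-mass and there is no contradiction with absolute continuity. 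The scale-invariant quantity would be the $L^{2n/(n-2\sigma)}$ norm, which is not assumed finite. The paper obtains the contradiction by a different route: after passing to the blow-up limit $w$ (a bubble), it runs the integral moving-sphere method \emph{on the rescaled solutions $w_j$ themselves}, proving that for every fixed $\lambda_0>0$ one has $(w_j)_{\lambda_0}\le w_j$ on the relevant domain for $j$ large, hence $w_\lambda\le w$ for all $\lambda>0$, which is incompatible with $w$ being a bubble. The delicate point is controlling the ``remainder'' term $J(\lambda,w_j,y)$ coming from the portion of the integral outside the rescaled unit ball; this is where the analysis of $K(0,\lambda;y,z)$ near $|y|=\lambda$ enters.

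\textbf{Dichotomy/lower bound.} Schoen's $\sup*\inf$ inequality cannot propagate smallness across scales. The inequality $\sup_{B_R}u\cdot\inf_{B_{2R}}u\le CR^{2\sigma-n}$ applied on balls avoiding the origin says only that if the supremum on a dyadic annulus is large then the infimum on a comparable annulus is small; if along some $r_j$ one has $r_j^{(n-2\sigma)/2}\bar u(r_j)\to 0$, the inequality is trivially satisfied there and says nothing about other radii. The paper's mechanism is a Pohozaev identity: there is a quantity $P(u,r)$ (built from the differential operator when $\sigma\in\mathbb N$, and from the Caffarelli--Silvestre/Yang extension when $\sigma\notin\mathbb N$) which is \emph{constant} in $r$. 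One first shows (via the usual Harnack plus a blow-down) that along a carefully chosen subsequence of local minima of $r^{(n-2\sigma)/2}\bar u(r)$ the rescalings $\varphi_i(y)=u(r_iy)/u(r_ie_n)$ converge to $\tfrac12|y|^{2\sigma-n}+\tfrac12$; this forces $P(u,r_i)\to 0$, hence $P\equiv 0$. On the other hand, evaluating the Pohozaev functional on the limit profile $\tfrac12|y|^{2\sigma-n}+\tfrac12$ gives a nonzero value (the cross term $\partial_\nu\Delta^{\sigma-1}|x|^{2\sigma-n}$, or its extension analogue, does not vanish), a contradiction. Once $\lim_{x\to 0}|x|^{(n-2\sigma)/2}u(x)=0$ is established, your barrier/bootstrap step (the paper's Lemma on removability) is fine. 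The asymptotic-symmetry step via moving spheres is essentially as in the paper.
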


Theorem \ref{thm:A} recovers Theorem \ref{thm:higher-order}. It also recovers the previous work of
Caffarelli-Jin-Sire-Xiong \cite{CJSX} for the fractional Yamabe equation ($0<\sigma<1$). Fractional Yamabe equations with a higher dimensional singular set have been studied by the authors with de Queiroz and Sire in \cite{JQSX}, and by Ao-Chan-DelaTorre-Fontelos-Gonz\'alez-Wei \cite{A+18} which extends the Mazzeo-Pacard \cite{MazzeoPard} program to a nonlocal setting.

Some blow up analysis arguments developed in Jin-Li-Xiong \cite{JLX} will be used in proving Theorem \ref{thm:A}. A difference from the nonlinear integral equations studied in \cite{CLO,Li04,JLX} is that our integral equation is locally defined, and we need to establish several delicate error estimates during the blow up. 
 When $\sigma$ is not an integer, the lower bound is much subtler. It relies on the localization formula for fractional Laplacian operators developed by Caffarelli-Silvestre \cite{CS2}, Chang-Gonz\'alez \cite{CG}, Yang \cite{Y}, Case-Chang \cite{CC}, as well as some technical analysis of Poisson integrals.

Note that if $u$ is a solution of \eqref{eq:main-3} as in Theorem \ref{thm:A} and $\sigma$ is not an integer, then one can verify that the function $\tilde h:=\int_{B_1\setminus B_{1/2}}\frac{u(y)^{\frac{n+2\sigma}{n-2\sigma}}}{|x-y|^{n-2\sigma}}\,\ud y$ is smooth in $B_{1/2}$, is positive in $\R^n$, satisfies the condition \eqref{eq:assumptionsh} and  $(-\Delta)^\sigma \tilde h=0$ in $B_{1/2}$. Thus, one always can consider the equation \eqref{eq:main-3} in a smaller ball with the same assumptions.

We are also interested in the global singular solutions of
\be\label{eq:main-6}
u(x)= \int_{\R^n}\frac{u(y)^{\frac{n+2\sigma}{n-2\sigma}}}{|x-y|^{n-2\sigma}}\,\ud y \quad \mbox{for }x\in \R^n\setminus \{0\}.
\ee
We know that $c|x|^{-\frac{n-2\sigma}{2}}$ is a solution of \eqref{eq:main-6} for a positive constant $c$ depending only on $n$ and $\sigma$. It has been proved by Chen-Li-Ou \cite{CLO1} that every smooth positive solution $u$ of \eqref{eq:main-6} with a non-removable singularity at the origin is radially symmetric. One may wonder whether there are other solutions and whether one can classify all the solutions. Let $t=\ln |x|$ and $\psi(t)=|x|^{\frac{n-2\sigma}{2}}u(|x|)$. Then we have
\be \label{eq:main-7}
\psi(t)=\int_{-\infty}^\infty K(t-s)\psi(s)^{\frac{n+2\sigma}{n-2\sigma}} \,\ud s,
\ee
where
\[
K(t)=\frac{1}{2^{\frac{n-2\sigma}{2}}}\int_{\mathbb{S}^{n-1}}\frac{\ud \xi}{|\cosh t-\xi_1|^{\frac{n-2\sigma}{2}}}.
\]
Non-constant periodic solutions of \eqref{eq:main-7} will be conventionally called the Fowler solutions. As mentioned earlier, Frank-K\"onig \cite{FK} classified all the singular solutions of $\Delta^2u=u^{\frac{n+4}{n-4}}$ in $\R^n\setminus\{0\}$, $n\ge 5$, corresponding to $\sigma=2$ in \eqref{eq:main-6}, where some existence results were obtained earlier by Guo-Huang-Wang-Wei \cite{GHWW}.  When $\sigma\in (0,1)$ and $n\ge 2$, the existence of Fowler solutions of $(-\Delta)^\sigma u=u^{\frac{n+2\sigma}{n-2\sigma}}$ was proved by DelaTorre-del Pino-Gonz\'alez-Wei \cite{D+}, which corresponds to $\sigma\in (0,1)$ and $n\ge 2$ in \eqref{eq:main-6}. See also Ao-DelaTorre-Gonz\'alez-Wei \cite{A+} for a gluing approach for the fractional Yamabe problem with multiple isolated singularities for $\sigma\in (0,1)$. 

Our next theorem asserts that  Fowler solutions exist for all $n\ge 1$ and $\sigma\in (0, n/2)$. 

\begin{thm} \label{thm:B} 
Suppose $n\ge 1$  and $0<\sigma<n/2$. 
There exists $T^*>0$ such that for every $T\ge T^*$, there exists a positive smooth non-constant periodic solutions of \eqref{eq:main-7} with period $T$.
\end{thm}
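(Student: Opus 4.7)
My plan is to construct, for each sufficiently large period $T$, a non-constant positive $T$-periodic solution of \eqref{eq:main-7} as a maximizer of a scale-invariant Rayleigh quotient on the circle $\R/T\mathbb{Z}$. Set $p = \frac{n+2\sigma}{n-2\sigma}$, and introduce the periodized kernel
$$K_T(t) = \sum_{k\in \mathbb{Z}} K(t+kT),$$
which converges because $K(t)\sim c\,e^{-(n-2\sigma)|t|/2}$ at infinity; moreover $\int_0^T K_T\,\ud t = \int_\R K\,\ud t =: A < \infty$. For $T$-periodic $\psi$, equation \eqref{eq:main-7} reduces to $\psi(t) = \int_0^T K_T(t-s)\psi(s)^p\,\ud s$. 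Consider the functional
$$\mathcal{R}_T(\psi) = \frac{\displaystyle\int_0^T\!\int_0^T K_T(t-s)\,\psi^p(t)\,\psi^p(s)\,\ud s\,\ud t}{\left(\int_0^T \psi^{p+1}\,\ud t\right)^{2p/(p+1)}}$$
on nonnegative, nontrivial $\psi \in L^{p+1}(\R/T\mathbb{Z})$. A Lagrange multiplier computation shows any maximizer satisfies $K_T*\psi^p = \mu\,\psi$ for some $\mu > 0$, and the scalar rescaling $\tilde\psi = \mu^{-1/(p-1)}\psi$ yields a solution of $\tilde\psi = K_T*\tilde\psi^p$, which is the periodic version of \eqref{eq:main-7}.

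Next I would prove existence of a maximizer by the direct method. The kernel $K$ has at most the mild Riesz-type singularity $|t|^{2\sigma-1}$ at the origin (with $2\sigma - 1 > -1$) and decays exponentially, so the one-dimensional Hardy--Littlewood--Sobolev inequality combined with H\"older's inequality on the compact interval $[0,T]$ shows that $\mathcal{R}_T$ is bounded above on $\{\|\psi\|_{L^{p+1}}=1\}$. For a maximizing sequence $\{\psi_j\}$, I would extract a weakly convergent subsequence in $L^{p+1}$ and exploit the smoothing effect of $\psi \mapsto K_T*\psi^p$ (a one-dimensional Riesz potential composed with a bounded convolution) to upgrade to strong convergence, producing a nonnegative maximizer $\psi_T$. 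Smoothness and strict positivity of $\psi_T$ follow by bootstrapping in the integral equation, with strict positivity using $K_T > 0$ everywhere on the circle.

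To certify that $\psi_T$ is non-constant for $T$ large, I would compare $\mathcal{R}_T$ at the constant solution $c_0 = A^{-1/(p-1)}$ with a fixed bump. A direct computation yields
$$\mathcal{R}_T(c_0) = A\,T^{(1-p)/(p+1)} \longrightarrow 0 \quad\text{as } T \to \infty,$$
since $p > 1$. Fix any nonnegative $\phi_0 \in C_c^\infty(\R)$ with $\operatorname{supp}\phi_0 \subset [-1,1]$ and $\phi_0 \not\equiv 0$; for $T \ge 2$, viewing $\phi_0$ as $T$-periodic and noting that the periodic copies of $K$ are exponentially small on $\operatorname{supp}\phi_0$, one has $\mathcal{R}_T(\phi_0) \to \mathcal{R}_\infty(\phi_0) > 0$. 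Thus there exists $T^* > 0$ such that for all $T \ge T^*$ the maximum of $\mathcal{R}_T$ strictly exceeds $\mathcal{R}_T(c_0)$, forcing the maximizer to be non-constant. The hardest step will be the compactness used to produce the maximizer: the exponent $(p+1)/p = 2n/(n+2\sigma)$ is strictly subcritical with respect to the 1D HLS dual exponent $2/(1+2\sigma)$ when $n \ge 2$ (making the direct method routine), but at $n = 1$ the embedding is critical and one must exclude translation/concentration of maximizing sequences by exploiting the strict monotonicity of $K(t)$ in $|t|$, for instance via symmetric-decreasing rearrangement on the circle.
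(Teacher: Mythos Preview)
Your variational setup, the functional $\mathcal{R}_T$, the comparison of its value at constants versus a fixed compactly supported bump, and the subcritical compactness argument for $n\ge 2$ are all essentially the paper's approach (Proposition~\ref{prop:fowler}); the paper works with $f=\psi^p$ and obtains strong convergence of $K_T*f_i$ in $L^{2n/(n-2\sigma)}$ via the Kolmogorov--Riesz--Fr\'echet theorem, matching your outline.

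The gap is in the critical case $n=1$. Symmetric-decreasing rearrangement on the circle, together with strict monotonicity of $K$ in $|t|$, pins down the location of a maximizer but does \emph{not} rule out concentration: a maximizing sequence can still collapse at a single point, and along such a sequence the value of $\mathcal{R}_T$ tends to exactly the sharp one-dimensional Hardy--Littlewood--Sobolev constant $S(\sigma)$, since the bounded part of $K$ becomes invisible after rescaling. What one actually needs is the strict inequality $\sup\mathcal{R}_T>S(\sigma)$, and the relevant feature of $K$ is not monotonicity but the \emph{positive-mass-type} lower bound $K(t)\ge |t|^{2\sigma-1}+A_0$ for some $A_0>0$ near $t=0$, coming from the $|\cosh(t)+1|^{-(1-2\sigma)/2}$ term in \eqref{eq:KKK}. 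The paper (Proposition~\ref{prop:fowler3}) implements this via subcritical approximation and blow-up analysis: if the subcritical maximizers $g_i$ blow up as $p_i\to\frac{1-2\sigma}{1+2\sigma}$, the rescaled limit solves the global critical equation on $\R$, forcing $J_T\le S(\sigma)$; this is then contradicted by a direct test-function computation with HLS bubbles $u_\lambda$, which, thanks to the positive-mass term, gives $J_T\ge S(\sigma)+c\lambda^{1-2\sigma}-C\lambda>S(\sigma)$ for small $\lambda$. Your proposal needs to replace the rearrangement/monotonicity idea with this positive-mass mechanism (or an equivalent one) to close the $n=1$ case.
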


The proof of this existence result uses a variational method, inspired by the work DelaTorre-del Pino-Gonz\'alez-Wei \cite{D+}. One difference is that they deal directly with the fractional Laplacian operator $(-\Delta)^\sigma$ for $\sigma\in (0,1)$, while we work with its dual form, the integral equation \eqref{eq:main-7}, which allows us to unify the result for all $n\ge 1$ and $0<\sigma<n/2$. If $n\ge 2$, then the exponent $\frac{n+2\sigma}{n-2\sigma}$ in \eqref{eq:main-7} is subcritical in 1-D, and thus, we can obtain a solution by maximizing  an energy functional using compact embeddings. The $n=1$ case is a critical problem. Nevertheless, the kernel $K(t)$ has a positive mass type property near the origin, and one can still obtain a solution as in the Yamabe problem.

The classification of all solutions to \eqref{eq:main-7} remains as an open question.

Finally, we would like to extend Schoen's Harnack inequality to the integral equation setting.

\begin{thm} \label{thm:C} Suppose $n\ge 1$, $0<\sigma<n/2$, $R>0$, and $u\in C(B_{3R})$ is a positive solution of
\[
u(x)=\int_{B_{3R}}\frac{u(y)^{\frac{n+2\sigma}{n-2\sigma}}}{|x-y|^{n-2\sigma}}\,\ud y+h(x), \quad  x\in B_{3R}, \ u>0\mbox{ in }B_{3R}.
\]
 Suppose that $h\in C^1(B_{3R})$ is a positive function such that
\[
R^{\frac{n-2\sigma}{2}}\|h\|_{L^\infty(B_{5R/2})}+R\|\nabla \ln h\|_{L^\infty(B_{5R/2})} \le A
\]
for some constant $A>0$. Then
\be\label{eq:thmcharnack}
\sup_{B_R} u \cdot \inf_{B_{2R}} u\le CR^{2\sigma-n}, 
\ee
where $C>0$ depends only on $n,\sigma$ and $A$.
\end{thm}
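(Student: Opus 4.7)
The plan is to argue by contradiction, first reducing to scale $R = 1$. The scaling $\tilde u(y) = R^{(n-2\sigma)/2}u(Ry)$, $\tilde h(y) = R^{(n-2\sigma)/2}h(Ry)$ preserves the integral equation on $B_3$, transforms the hypotheses on $h$ into the same $A$-bounds for $\tilde h$, and rewrites the sought estimate as $\sup_{B_1}\tilde u \cdot \inf_{B_2}\tilde u \le C$. Suppose for contradiction there exist sequences $\{u_k\}, \{h_k\}$ satisfying all hypotheses with $A$ fixed while $\sup_{B_1} u_k \cdot \inf_{B_2} u_k \to \infty$. Since $\inf \le \sup$, necessarily $M_k := \sup_{B_1} u_k \to \infty$. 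Pick $x_k \in \overline{B_1}$ attaining $M_k$, set $\lambda_k = M_k^{-2/(n-2\sigma)} \to 0$, and introduce the rescaled function $v_k(y) = \lambda_k^{(n-2\sigma)/2}u_k(x_k + \lambda_k y)$, normalized so that $v_k(0) = 1$.

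Next I carry out the standard blow-up analysis. The function $v_k$ satisfies the integral equation on the growing domain $\Omega_k = \lambda_k^{-1}(B_3 - x_k) \nearrow \R^n$ with source $\tilde h_k(y) = \lambda_k^{(n-2\sigma)/2}h_k(x_k + \lambda_k y)$, which tends to $0$ locally uniformly since $\|h_k\|_\infty \le A$. Testing the original equation at $x_k$ gives the automatic a priori bound $\int_{B_R} v_k^{(n+2\sigma)/(n-2\sigma)}\,\ud y \le CR^{n-2\sigma}$ for every $R > 0$; combined with $v_k(0) = 1$ and standard Riesz potential regularity, a subsequence converges in $C^0_{loc}(\R^n)$ to a positive limit $v_\infty$ satisfying
\begin{equation*}
v_\infty(y) = \int_{\R^n}\frac{v_\infty(z)^{(n+2\sigma)/(n-2\sigma)}}{|y-z|^{n-2\sigma}}\,\ud z.
\end{equation*}
By the classification of Chen-Li-Ou \cite{CLO} (see also Y.Y.~Li~\cite{Li04}), $v_\infty$ is a standard bubble; in particular $\int_{\R^n} v_\infty^{(n+2\sigma)/(n-2\sigma)} < \infty$.

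The key step, which I expect to be the main obstacle, is to upgrade this qualitative convergence to a quantitative bubble-type pointwise upper bound for $u_k$ away from $x_k$. I would apply the integral version of the method of moving spheres in the spirit of~\cite{CLO, Li04, JLX}, centered at $x_k$: for $\mu \in (0,\mu_0]$ compare $u_k$ with its Kelvin transform
\begin{equation*}
(u_k)_{x_k,\mu}(x) = (\mu/|x-x_k|)^{n-2\sigma}\,u_k\bigl(x_k + \mu^2(x-x_k)/|x-x_k|^2\bigr).
\end{equation*}
When $h_k \equiv 0$ this leads to an exact integral inequality enabling the sphere to be moved; with $h_k$ present, the non-invariance of $h_k$ under Kelvin reflection produces an error term whose size is controlled by $\|h_k\|_{L^\infty}\le A$ together with the Lipschitz bound $\|\nabla \ln h_k\|_{L^\infty}\le A$ (the latter ensuring $h_k$ and $(h_k)_{x_k,\mu}$ differ by only a small multiplicative amount for $\mu$ below a fixed scale). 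The bubble mass carried by $v_\infty$ and the smallness of $\lambda_k$ then allow one to start the moving-sphere procedure at a $\mu$ close to $\lambda_k$ and proceed past a radius $\mu_0 > 0$ independent of $k$, yielding
\begin{equation*}
u_k(x) \le C\,M_k^{-1}|x-x_k|^{-(n-2\sigma)},\qquad x \in B_{5/2}\setminus B_{\epsilon_0}(x_k),
\end{equation*}
for some fixed $\epsilon_0 > 0$. Since $x_k \in \overline{B_1}$, I can then choose $y_k \in B_2$ with $|y_k - x_k| \ge \epsilon_0$, and this bound gives $\inf_{B_2} u_k \le u_k(y_k) \le C M_k^{-1}$, so $\sup_{B_1} u_k \cdot \inf_{B_2} u_k \le C$, contradicting the standing assumption that the product diverges.
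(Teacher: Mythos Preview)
Your overall architecture---reduce to $R=1$, argue by contradiction, blow up, apply moving spheres---is the paper's, but two pieces are missing.

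First, you skip the doubling step. Choosing $x_k\in\overline{B_1}$ with $u_k(x_k)=M_k$ only gives $v_k\le 1$ on $\lambda_k^{-1}(\overline{B_1}-x_k)$; if $x_k$ drifts to $\partial B_1$ this is essentially a half-space, not a ball of growing radius, and your integral bound $\int_{B_R}v_k^{(n+2\sigma)/(n-2\sigma)}\le CR^{n-2\sigma}$ is too weak to rule out a second concentration point on the uncontrolled side. The paper fixes this with the standard doubling trick: maximize $(1/2-|x-x_k|)^{(n-2\sigma)/2}u_k(x)$ over $\overline{B_{1/2}(x_k)}$, recentre at the maximizer $\bar x_k$, and rescale there; then the rescaled function is bounded by $2^{(n-2\sigma)/2}$ on a ball of radius $R_k\to\infty$, and compactness gives a bubble limit.

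Second, and this is the heart of the matter, your moving-sphere sketch never uses the contradiction hypothesis $\sup_{B_1}u_k\cdot\inf_{B_2}u_k\to\infty$ inside the argument. After extending $u_k$ by zero outside $B_3$, the difference $w_k-(w_k)_\lambda$ contains, besides the $h_k$-error that Lemma~\ref{lem:fix_error} handles via $\|\nabla\ln h_k\|\le A$, a \emph{negative} tail
\[
-\int_{\Omega_k^c}K(0,\lambda;y,z)\,(w_k)_\lambda(z)^{\frac{n+2\sigma}{n-2\sigma}}\,\ud z
\]
from the region where $w_k\equiv 0$. In Proposition~\ref{prop:upbound} this is beaten by the positive annular term $\int_{\Omega_k\setminus\Sigma_k}K\cdot\big(w_k^{p}-(w_k)_\lambda^{p}\big)$ because there $u$ has a fixed lower bound $c_0>0$. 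In Theorem~\ref{thm:C} no such lower bound is available a priori; the paper instead exploits the contradiction hypothesis to write
\[
w_k(y)\ge\frac{\min_{\overline B_2}u_k}{u_k(\bar x_k)}\ge\frac{k}{u_k(\bar x_k)\,u_k(x_k)}\ge c\,\frac{k}{u_k(\bar x_k)^{2}}\quad\text{on }\Omega_k,
\]
and the diverging factor $k$ is precisely what makes the positive annular contribution dominate the negative tail (compare \eqref{eq:lower-J-harnack} with \eqref{eq:lower-J}). Without this, the sphere cannot be pushed to an arbitrary $\lambda_0$, and your claimed decay $u_k(x)\le CM_k^{-1}|x-x_k|^{2\sigma-n}$ does not follow. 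The paper does not in fact aim for that decay; it shows directly that $(w_k)_{\lambda_0}\le w_k$ on $\Sigma_k$ for every fixed $\lambda_0$, passes to the limit, and concludes $w_{\lambda_0}\le w$ for all $\lambda_0>0$, contradicting the bubble form of $w$.
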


This paper is organized as follows. In Section \ref{sec:poly}, we show that Theorem \ref{thm:higher-order} follows from Theorem \ref{thm:A}. In Section \ref{sec:ub}, we prove the upper bound and asymptotic radial symmetry in  Theorem \ref{thm:A}. In Section \ref{sec:lowerbound}, we show the lower bound in Theorem \ref{thm:A}, and complete our proof of Theorem \ref{thm:A}. In Section \ref{sec:Fowler}, we prove the existence of Fowler solutions as in Theorem \ref{thm:B}. In Section \ref{sec:harnack}, we show the Harnack inequality stated in Theorem \ref{thm:C}. In Appendix \ref{appendix:pohozaev}, we include two Pohozaev type identities that were used in proving the lower bound in Theorem \ref{thm:A}. In Appendix \ref{appendix:poisson} and Appendix \ref{appendix:technical}, we obtain some technical estimates for the Poisson extension, as well as its blow up analysis, that are used to prove the lower bound in Theorem \ref{thm:A} for the non-integer cases.

\bigskip

\noindent \textbf{Acknowledgement:} Part of this work was completed while the second named author was visiting the Department of Mathematics at the Hong Kong University of Science and Technology, to which he is grateful for providing  the very stimulating research environment and supports. Both authors would like to thank Professor YanYan Li for his interests and constant encouragement.

\section{An integral representation for the integer order cases}\label{sec:poly}

For $n\ge 3$, recall that the Green function of $-\Delta $ on the unit ball is given by
\[
G_1(x,y)=\frac{1}{(n-2)\w_{n-1}} \left(\Big|x-y\Big|^{2-n}- \Big|\frac{x}{|x|}-|x|y\Big |^{2-n}\right),
\]
where $\w_{n-1}$ is the surface area of the unit sphere in $\R^n$.  Namely, for any $u\in C^2(B_1)\cap C(\overline B_1)$,
\[
u(x)= \int_{B_1} G_1(x,y)(-\Delta) u(y) \,\ud y+ \int_{\pa B_1} H_1(x,y) u(y)\,\ud S_y,
\]
where
\[
H_1(x,y)=-\frac{\pa }{\pa \nu_y}G_1(x,y)= \frac{1-|x|^2}{\w_{n-1} |x-y|^n} \quad \mbox{for }x\in B_1, ~y\in \pa B_1.
\]
By induction, we have, for $2m<n$ and $u\in C^{2m}(B_1)\cap C^{2m-2}(\overline B_1)$,
\[
u(x)= \int_{B_1} G_m (x,y) (-\Delta )^m u(y) \,\ud y+\sum_{i=1}^{m} \int_{\pa B_1} H_i(x,y) (-\Delta)^{i-1} u(y)\,\ud S_y,
\]
where
\[
G_m(x,y) =\int_{B_1\times \cdots \times B_1} G_1(x,y_1) G_1(y_1,y_2) \dots G_1(y_{m-1}, y)\,\ud y_1\dots \ud y_{m-1},
\]
and 
\[
H_i(x,y)= \int_{B_1\times \cdots \times B_1} G_1(x,y_1) G_1(y_1,y_2) \dots G_1(y_{i-2}, y_{i-1}) H_1(y_{i-1}, y)\,\ud y_1\dots \ud y_{i-1}
\]
for $2\le i\le m$.

By direct computations, we have
\be\label{eq:Green-expand}
G_m(x,y)=c(n,m) |x-y|^{2m-n} +A_m(x,y),
\ee
$c(n,m)= \frac{\Gamma(\frac{n-2m}{2})}{2^{2m} \pi^{n/2} \Gamma(m)}$, $ A_m(\cdot,\cdot)$ is smooth in $B_1\times B_1$,  and
\[
H_i(x,y)\ge 0, \quad i=1,\dots, m.
\]

\begin{lem}\label{lem:integrable} Let $1\le m <n/2$ be an integer. Let $u\in C^{2m}(\overline B_1\setminus \{0\})$ be a positive  solution of
\be \label{eq:integer-1}
(-\Delta)^m u= u^p \quad \mbox{in }B_1\setminus \{0\},
\ee
where $p> \frac{n}{n-2m}$. Then $u(x)^p |x|^{-\al} \in L^1(B_{1})$  for every $\al<n-\frac{2mp}{p-1}$. Moreover,
\be \label{eq:dual-form}
u(x)= \int_{B_1} G_m (x,y) u(y)^p \,\ud y +\sum_{i=1}^m \int_{\pa B_1} H_i(x,y) (-\Delta)^{i-1} u(y)\,\ud S_y.
\ee
\end{lem}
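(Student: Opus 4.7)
The plan is to establish the weighted integrability of $u^p$ first, and then deduce the integral representation from it.

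For the integrability, I would fix $\alpha\in[0,n-\tfrac{2mp}{p-1})$ and exploit the identity
\[
(-\Delta)^m|x|^{-\alpha}=c_{n,m,\alpha}\,|x|^{-\alpha-2m},\qquad c_{n,m,\alpha}:=\prod_{k=0}^{m-1}(\alpha+2k)(n-2-\alpha-2k),
\]
with $c_{n,m,\alpha}>0$ for $\alpha\in(0,n-2m)$. Let $\zeta\in C_c^\infty(B_1)$ be a cutoff with $\zeta\equiv 1$ on $B_{1/2}$, and let $\eta_\delta$ be a radial cutoff vanishing on $B_\delta$ and equal to $1$ outside $B_{2\delta}$. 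I would test $(-\Delta)^m u=u^p$ against $\zeta\eta_\delta|x|^{-\alpha}$, which is smooth and compactly supported in $B_1\setminus\{0\}$ where $u$ is classically $C^{2m}$, so integration by parts produces no boundary contributions:
\[
\int_{B_1} u^p\,\zeta\eta_\delta|x|^{-\alpha}\,dx=\int_{B_1} u\cdot(-\Delta)^m(\zeta\eta_\delta|x|^{-\alpha})\,dx.
\]
Derivatives of $\zeta$ live in the regular annulus $\{1/2\le|x|\le 3/4\}$ and contribute $O(1)$; derivatives of $\eta_\delta$ live in $\{\delta\le|x|\le 2\delta\}$ with $|\nabla^j\eta_\delta|\lesssim\delta^{-j}$ and produce an error at most $C\delta^{-\alpha-2m}\int_{\{\delta\le|x|\le 2\delta\}} u\,dx$.

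I plan to close the argument by bootstrapping in $\alpha$. The base step $\alpha=0$ should yield $u^p\in L^1(B_1)$ by selecting a subsequence $\delta_j\to 0$ along which the correction $\delta_j^{-2m}\int_{\{\delta_j\le|x|\le 2\delta_j\}} u\,dx$ vanishes; such a subsequence exists by an a priori $L^q_{\mathrm{loc}}$-integrability of $u$ coming from standard polyharmonic theory for positive functions with $(-\Delta)^m u\ge 0$. At the inductive step with $\alpha>0$, Fubini in the radial variable gives
\[
\int_0^{1/2}\delta^{-1-\alpha-2m}\!\int_{\{\delta\le|x|\le 2\delta\}} u\,dx\,d\delta\;\lesssim\;\int_{B_1} u\,|x|^{-\alpha-2m}\,dx,
\]
so finiteness of the right-hand side again produces a subsequence along which the correction vanishes. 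Passing to this limit,
\[
\int_{B_1} u^p\,\zeta|x|^{-\alpha}\,dx\le c_{n,m,\alpha}\int_{B_1} u\,\zeta|x|^{-\alpha-2m}\,dx+O(1).
\]
Young's inequality $u|x|^{-\alpha-2m}\le\eta\,u^p|x|^{-\alpha}+C_\eta\,|x|^{-\alpha-\tfrac{2mp}{p-1}}$ with small $\eta$ then absorbs the leading right-hand term into the left; the residual weight $|x|^{-\alpha-2mp/(p-1)}$ lies in $L^1(B_1)$ exactly because $\alpha<n-\tfrac{2mp}{p-1}$. Iterating in finitely many increments raises $\alpha$ across the whole stated range.

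For the representation, I would set
\[
w(x):=\int_{B_1} G_m(x,y) u(y)^p\,dy+\sum_{i=1}^m\int_{\partial B_1} H_i(x,y)(-\Delta)^{i-1}u(y)\,dS_y.
\]
With $u^p\in L^1(B_1)$ in hand, $w$ is well-defined and is a distributional solution of $(-\Delta)^m w=u^p$ in $B_1$ satisfying the Navier data $(-\Delta)^i w=(-\Delta)^i u$ on $\partial B_1$ for $i=0,\ldots,m-1$. Then $v:=u-w$ is polyharmonic in $B_1\setminus\{0\}$, lies in $L^1_{\mathrm{loc}}(B_1)$, and has vanishing Navier data on $\partial B_1$; classical removable-singularity and uniqueness theory for polyharmonic functions then forces $v\equiv 0$. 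The hardest part will be controlling the boundary correction at $|x|\sim\delta$ during the bootstrap: it must vanish along a subsequence at each step, which in turn requires the target weighted integral to be already finite; the circularity is broken by initiating the bootstrap at $\alpha=0$ using the a priori $L^q_{\mathrm{loc}}$-bound from polyharmonic theory.
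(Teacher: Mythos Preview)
Your base step $\alpha=0$ has a genuine gap. You claim that ``an a priori $L^q_{\mathrm{loc}}$-integrability of $u$ coming from standard polyharmonic theory for positive functions with $(-\Delta)^m u\ge 0$'' furnishes a subsequence along which $\delta_j^{-2m}\int_{\{\delta_j\le|x|\le 2\delta_j\}} u\to 0$. No such theory exists: the condition $(-\Delta)^m u\ge 0$ on a punctured ball gives no $L^q$ control strong enough for this. Indeed $u(x)=|x|^{2m-n}$ satisfies $(-\Delta)^m u=0$ on $B_1\setminus\{0\}$, yet $\delta^{-2m}\int_{\{\delta\le|x|\le 2\delta\}}|x|^{2m-n}\,dx\sim 1$ for all $\delta$. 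By your own Fubini remark, the subsequence you need would require $\int_{B_1} u\,|x|^{-2m}\,dx<\infty$ a priori, which is precisely unavailable at this stage. The paper closes this loop differently: it tests against $\varphi_\varepsilon(x)=\eta(|x|/\varepsilon)^q$ with $q=\frac{2mp}{p-1}$, so that the annular error becomes $C\varepsilon^{-2m}\int_{\{\varepsilon\le|x|\le 2\varepsilon\}} u\,\eta^{q-2m}\le C\varepsilon^{-2m}\int u\,\varphi_\varepsilon^{1/p}$, and then H\"older against the \emph{equation itself} gives the self-bounding inequality $\int u^p\varphi_\varepsilon\le C\big(\int u^p\varphi_\varepsilon\big)^{1/p}+C$ (the prefactor $\varepsilon^{\,n-n/p-2m}$ is bounded exactly because $p>n/(n-2m)$). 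This is where the nonlinear structure enters; your scheme never uses it at the base step.

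Once $u^p\in L^1$ is in hand, your bootstrap is essentially the paper's (H\"older to push integrability of $u|x|^{-\gamma}$, then re-test with the weighted cutoff), and the Young-inequality absorption is actually superfluous since at each step the right-hand side is already finite. Your sketch of the representation is also too loose: $v:=u-w\in L^1_{\mathrm{loc}}(B_1)$ does \emph{not} by itself rule out a singular part of $(-\Delta)^m v$ supported at the origin, since the fundamental solution $|x|^{2m-n}$ is in $L^1_{\mathrm{loc}}$. The paper instead shows a vanishing weak-$L^{n/(n-2m)}$ condition near $0$ (for every $\varepsilon>0$ there is $\rho>0$ with $|\{x\in B_\rho:|v|>\lambda\}|\le C\varepsilon\lambda^{-n/(n-2m)}$ for large $\lambda$), and then invokes the generalized B\^ocher theorem for polyharmonic functions to remove the singularity.
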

\begin{proof}
The fact that $u^p\in L^1(B_1)$ was proved in Caffarelli-Gidas-Spruck  \cite{CGS}, Lin \cite{Lin} and Xu-Wei \cite{WX}. For completeness, we include a proof here, which uses some arguments in Yang \cite{YangHui}.

Let $\eta$ be a smooth function defined in $\R$ satisfying $\eta(t)= 0$ for $t\le 1$, $\eta(t)=1$ for $t\ge 2$ and $0\le \eta\le 1$ for $1\le t\le 2$. For small $\va>0$, let $\varphi_\va(x)=\eta(\va^{-1}|x|)^q$ with $q=\frac{2mp}{p-1}$. Multiplying both sides of \eqref{eq:integer-1} by $\varphi_\va(x)$  and using integration by parts, we have
\begin{align*}
\int_{B_1} u^p \varphi_\va&=   \int_{B_1} u (-\Delta)^m \varphi_\va + \int_{\pa B_1} \frac{\pa(-\Delta)^{m-1}u}{\pa\nu}\,\ud S,\\
&\le C \va^{-2m}\int_{\va\le|x|\le2\va} u \eta(\va^{-1}|x|)^{q-2m}+C\\
&\le C \va^{-2m}\int_{\va\le|x|\le2\va} u \varphi_\va^{\frac{1}{p}}+C\\
&\le C \va^{n-\frac{n}{p}-2m}\left(\int_{B_1} u^p \varphi_\va\right)^{1/p}+C\\
&\le C\left(\int_{B_1} u^p \varphi_\va\right)^{1/p}+C,
\end{align*}
from which it follows that 
\[
\int_{2\va\le|x|\le 1} u^p \le \int_{B_1} u^p \varphi_\va\le C.
\]
 By sending $\va\to 0$, we obtain 
 \[
 \int_{B_1} u^p\le C.
 \]
 
Next, we shall use some arguments in Sun-Xiong \cite{SX}. By H\"older's inequality, we have
\[
\int_{B_1} u|x|^{-\al_1}\le \Big(\int_{B_1} u^p\Big)^{1/p} \Big(\int_{B_1} |x|^{-\frac{p\al_1}{p-1}}\Big)^{(p-1)/p}<\infty
\]
if $0<\al_1<\frac{np-n}{p}$. Multiplying both sides of \eqref{eq:integer-1} by $\eta(\va^{-1}|x|) |x|^{2m-\al}$  and using integration by parts, we have
\be \label{eq:green-1}
\int_{B_1} u^p \eta(\va^{-1}|x|)  |x|^{2m-\al} =   \int_{B_1} u (-\Delta)^m \Big(\eta(\va^{-1}|x|) |x|^{2m-\al} \Big)  + \int_{\pa B_1} F(u)\,\ud S,
\ee
where $F(u)$ involves $u$ and its derivatives up to order $2m-1$. Now letting $\alpha=\alpha_1$ in \eqref{eq:green-1} and sending $\va\to 0$, we have 
\[
\int_{B_1} u^p |x|^{2m-\alpha_1}<\infty.
\]
Using H\"older's inequality again, we can show that 
\[
\int_{B_1} u|x|^{-\al_2}<\infty\quad\mbox{if}\quad \alpha_2<\frac{n(p-1)-2m}{p}+\frac{\alpha_1}{p}.
\]
Iterating this procedure, it leads to that 
\[
\int_{B_1} u|x|^{-\al_k}<\infty\quad\mbox{and}\quad\int_{B_1} u^p |x|^{2m-\alpha_k}<\infty
\]
if 
\begin{align*}
0<\al_k &<\frac{n(p-1)-2m}{p}+\frac{\alpha_{k-1}}{p}\\
&< (n(p-1)-2m) \left(\frac{1}{p}+\cdots +\frac{1}{p^{k-1}}\right)+\frac{\alpha_1}{p^{k-1}},\ k=1, 2,\cdots.
\end{align*}
We proved the first conclusion.

Let
\[
v(x)= \int_{B_1} G_m (x,y) u(y)^p \,\ud y +\sum_{i=1}^m \int_{\pa B_1} H_i(x,y) (-\Delta)^{m-i}u (y)\,\ud S_y.
\]
Let $w=u-v$. Then
\[
(-\Delta)^m w=0 \quad \mbox{in }B_1\setminus \{0\}. 
\]
Since $u^p\in L^1(B_1)$ and the Riesz potential $|x|^{2m-n}$ is weak type $\left(1,\frac{n}{n-2m}\right)$, $v\in L^{\frac{n}{n-2m}}_{weak} (B_{1})\cap L^1(B_1)$. Moreover, for every $\va>0$ we can choose $\rho>0$ such that $\int_{B_{2\rho}} u^p<\va$. Then for all sufficiently large $\lda$, we have
\[
|x\in B_{\rho}: |v(x)|>\lda| \le \left| x\in B_{\rho}: \int_{B_{2\rho}} G_m (x,y) u(y)^p \,\ud y>\lda/2 \right|\le C(n,m)\va \lda^{-\frac{n}{n-2m}}.
\]
Hence, $w\in L^{\frac{n}{n-2m}}_{weak} (B_{1})\cap L^1(B_1)$, and for every $\va>0$ there exists $\rho>0$ such that  for all sufficiently large $\lda$,
\begin{align*}
|x\in B_{\rho}: |w(x)|>\lda| &\le  |x\in B_{\rho}: |u(x)|>\lda/2| +|x\in B_{\rho}: |v(x)|>\lda/2| \\
&\le C(n,m)\va \lda^{-\frac{n}{n-2m}}.
\end{align*}
By the generalized B\^ocher's theorem for polyharmonic functions in \cite{FKM}, $(-\Delta )^m w(x)=0$ in $B_1$. Since
$w=\Delta w= \dots =\Delta^{m-1} w=0$ on $\pa B_1$, $w\equiv 0$ and thus $u=v$. This completes the proof of \eqref{eq:dual-form}.
\end{proof}

\begin{proof} [Proof of Theorem \ref{thm:higher-order} using Theorem \ref{thm:A}] 
We can suppose that $u\in C^{2m}(\overline B_1\setminus\{0\})$ and $u>0$ in $\overline B_1$, since otherwise we just consider the equation in a smaller ball.

Since $-\Delta u\ge 0$ in $B_1\setminus\{0\}$, and $u>0$ in $\overline B_1$, we know from the maximum principle that $c_1:=\inf_{B_1} u=\min_{\partial B_1}u>0$. By Lemma \ref{lem:integrable}, $u^\frac{n+2m}{n-2m}\in L^1(B_1)$. Thus, one can find $\tau<1/4$ such that
\[
\int_{B_\tau} |A_m(x,y)| u(y)^\frac{n+2m}{n-2m}\,\ud y \le \frac{c_1}{2} \quad \mbox{for }x\in B_{\tau},
\]
where $A_m(x,y)$ is as in \eqref{eq:Green-expand}. By Lemma \ref{lem:integrable}, we write
\[
u(x)= C \int_{B_\tau} \frac{u(y)^\frac{n+2m}{n-2m}}{|x-y|^{n-2m}} \,\ud y +h_1(x), 
\]
where
\begin{align*}
h_1(x)&=C\int_{B_\tau} A_m(x,y) u(y)^p\,\ud y +\int_{B_1\setminus B_\tau} G_m(x,y) u(y)^{p} \,\ud y \\
&\quad +  \sum_{i=1}^m \int_{\pa B_1} H_i(x,y) (-\Delta)^{i-1}u (y)\,\ud S_y \\
& \ge -\frac{c_1}{2} +\int_{\pa B_1} H_1(x,y) u (y)\,\ud S_y \\
&\ge -\frac{c_1}{2}+\inf_{B_1} u=\frac{c_1}{2}\quad \mbox{for }x\in B_{\tau},
\end{align*}
where we used the sign conditions \eqref{eq:h-2} in the first inequality. This is the only place that we used these sign conditions \eqref{eq:h-2}. On the other hand, $h_1$ is smooth in $B_\tau$ and satisfies $(-\Delta)^m h_1=0$ in $B_\tau$. Hence, the conclusion follows from Theorem \ref{thm:A}.
\end{proof}

Before proving Theorem \ref{thm:A} in the next sections, we would like to give two sufficient conditions for the sign assumptions \eqref{eq:h-2}.
\begin{prop}\label{prop:sufficientcondition1}
Assume as in Lemma \ref{lem:integrable}. If
\be\label{eq:blowupassumptionprop}
\lim_{\va\to 0}\int_{B_{1/2}\setminus B_\va}\frac{u(y)^{\frac{n+2m}{n-2m}}}{|y|^{n+2-2m}}\,\ud y=\infty,
\ee
then the sign conditions \eqref{eq:h-2} hold in some small punctured ball $B_\tau\setminus\{0\}$ for some $\tau>0$.
\end{prop}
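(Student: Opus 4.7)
The plan is to apply $(-\Delta)^k$ for $1\le k\le m-1$ to the integral representation \eqref{eq:dual-form} of Lemma \ref{lem:integrable}, isolate the leading singular Riesz-type contribution, and use the hypothesis \eqref{eq:blowupassumptionprop} to force $(-\Delta)^k u(x)\to +\infty$ as $x\to 0$; the positivity of $(-\Delta)^k u$ on a small punctured ball then follows immediately.

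Using the expansion \eqref{eq:Green-expand}, the radial identity $\Delta|x-y|^\al = \al(\al+n-2)|x-y|^{\al-2}$ gives, for $x\ne y$ and $1\le k\le m-1$,
\[
(-\Delta)^k_x|x-y|^{2m-n} = c_{n,m,k}\,|x-y|^{2(m-k)-n},
\]
where $c_{n,m,k}=\prod_{j=0}^{k-1}(n-2(m-j))\cdot 2(m-j-1) > 0$ because every factor is positive when $n>2m$ and $k<m$. The smooth remainder $A_m$ produces, after $k$ Laplacians in $x$, a function smooth on $B_1\times B_1$. Since the resulting singularity $|x-y|^{2(m-k)-n}$ is integrable (as $m-k\ge 1$) and $u^p\in L^1(B_1)$ by Lemma \ref{lem:integrable}, termwise differentiation of the Green integral in \eqref{eq:dual-form} is justified. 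Applied analogously to the boundary terms, using $(-\Delta)_x H_1=0$ and $(-\Delta)_x H_i=H_{i-1}$ for $i\ge 2$ from the convolution definition of $H_i$, I would arrive at
\[
(-\Delta)^k u(x) = c_{n,m,k}\int_{B_1} \frac{u(y)^p}{|x-y|^{n-2(m-k)}}\,\ud y + \Phi_k(x),
\]
where $\Phi_k$ is bounded on $\overline{B_{1/2}}$: the $A_m$-contribution is dominated by $\|(-\Delta)^k_x A_m\|_{L^\infty(B_1\times B_1)}\int_{B_1} u^p$, and the surviving boundary integrals $\sum_{i=k+1}^m \int_{\pa B_1} H_{i-k}(x,y)(-\Delta)^{i-1}u(y)\,\ud S_y$ depend smoothly on $x\in B_1$.

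For $|x|\le 1/8$ and $|y|\ge 2|x|$, the elementary bound $|x-y|\le 3|y|/2$ yields
\[
\int_{B_1} \frac{u(y)^p}{|x-y|^{n-2(m-k)}}\,\ud y \;\ge\; \Big(\tfrac{2}{3}\Big)^{n-2(m-k)} \int_{B_{1/2}\setminus B_{2|x|}} \frac{u(y)^p}{|y|^{n-2(m-k)}}\,\ud y.
\]
Because $1\le k\le m-1$ forces $n-2(m-k)\ge n+2-2m$ and $|y|\le 1/2$ implies $|y|^{-(n-2(m-k))}\ge |y|^{-(n+2-2m)}$, the integrand on the right dominates the integrand in \eqref{eq:blowupassumptionprop}. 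Thus the hypothesis makes the Riesz integral diverge as $x\to 0$, so $(-\Delta)^k u(x)\to +\infty$, and there exists $\tau>0$ with $(-\Delta)^k u>0$ on $B_\tau\setminus\{0\}$ simultaneously for every $k=1,\ldots,m-1$.

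The main technical point I anticipate is verifying the termwise differentiation under the integral sign and the boundedness of $\Phi_k$; this should be straightforward from the explicit expansion \eqref{eq:Green-expand} combined with $u^p\in L^1(B_1)$, but the bookkeeping with the iterated Green functions $G_{m-k}$ and the Poisson-type kernels $H_{i-k}$ — together with tracking that the constant $c_{n,m,k}$ is strictly positive all the way down to $k=m-1$ — must be done carefully.
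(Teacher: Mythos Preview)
Your proof is correct and follows essentially the same approach as the paper: both derive the representation $(-\Delta)^k u(x) = C\int_{B_1}|x-y|^{2(m-k)-n} u(y)^p\,\ud y + \Phi_k(x)$ with $\Phi_k$ bounded near the origin (the paper obtains this via the Green function $G_{m-k}$ directly, writing the remainder as $h_2$), and then use the hypothesis \eqref{eq:blowupassumptionprop} to show the Riesz integral dominates the bounded remainder. The only cosmetic difference is that the paper first rescales $v=r^{(n-2m)/2}u(rx)$ and then compares, whereas you compare directly via $|x-y|\le \tfrac{3}{2}|y|$ on $\{|y|\ge 2|x|\}$; both yield the same lower bound and the same conclusion.
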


\begin{proof}
By Lemma \ref{lem:integrable}, for $k=1,\cdots,m-1$, and $x\in B_1\setminus\{0\}$,
 \begin{align*}
 (-\Delta)^k u(x)&=\int_{B_1} G_{m-k} (x,y) u(y)^p \,\ud y +\sum_{i=k+1}^{m} \int_{\pa B_1} H_{i-k}(x,y) (-\Delta)^{i-1} u(y)\,\ud S_y\\
 &= C\int_{B_1} \frac{u(y)^\frac{n+2m}{n-2m}}{|x-y|^{n-2(m-k)}} \,\ud y +h_2(x), 
 \end{align*}
 where
 \[
 h_2(x)=C \int_{B_1} A_{m-k}(x,y)u(y)^\frac{n+2m}{n-2m}\,\ud y +\sum_{i=k+1}^m \int_{\pa B_1} H_{i-k}(x,y) (-\Delta)^{i-1} u(y)\,\ud S_y.
 \]
Then $h_2$ is smooth in $B_{1/2}$.

Let $r>0$ be small, which will be chosen below. Let $v=r^{\frac{n-2m}{2}}u(rx)$. Then
\begin{align*}
(-\Delta)^k v(x)&= C \int_{B_{1/r}} \frac{v(y)^\frac{n+2m}{n-2m}}{|x-y|^{n-2(m-k)}} \,\ud y +r^{\frac{n-2m}{2}+2k}h_2(rx)\\
&\ge C \int_{B_{1/r}\setminus B_1} \frac{v(y)^\frac{n+2m}{n-2m}}{|x-y|^{n-2(m-k)}} \,\ud y +r^{\frac{n-2m}{2}+2k}h_2(rx)\\
&=:\tilde h_2(x).
  \end{align*}
For every $x\in B_{1}$,
\begin{align*}
\tilde h_2(x)&\ge C \int_{B_{1/r}\setminus B_2} \frac{v(y)^\frac{n+2m}{n-2m}}{|x-y|^{n-2(m-k)}} \,\ud y +r^{\frac{n-2m}{2}+2k}h_2(rx)\\
&\ge C \int_{B_{1/r}\setminus B_2} \frac{v(y)^\frac{n+2m}{n-2m}}{|y|^{n-2(m-k)}} \,\ud y +r^{\frac{n-2m}{2}+2k}h_2(rx)\\
&=C r^{\frac{n-2m}{2}+2k}\int_{B_{1}\setminus B_{2r}} \frac{u(y)^\frac{n+2m}{n-2m}}{|y|^{n-2(m-k)}} \,\ud y +r^{\frac{n-2m}{2}+2k}h_2(rx)\\
&\ge C r^{\frac{n-2m}{2}+2k}\int_{B_{1}\setminus B_{2r}} \frac{u(y)^\frac{n+2m}{n-2m}}{|y|^{n-2(m-1)}} \,\ud y +r^{\frac{n-2m}{2}+2k}h_2(rx).
  \end{align*}
By the assumption \eqref{eq:blowupassumptionprop}, we can choose $r\in(0,1/4)$ small such that
\[
C\int_{B_{1}\setminus B_{2r}} \frac{u(y)^\frac{n+2m}{n-2m}}{|y|^{n-2(m-1)}} \,\ud y\ge \|h_2\|_{L^\infty(B_{1/4})}+1.
\]
Hence, $\tilde h_2$ is positive, and thus, $(-\Delta)^k v$ is positive in $B_1\setminus\{0\}$. So $(-\Delta)^k u$ is positive in $B_r\setminus\{0\}$.

\end{proof}

\begin{prop}\label{prop:sufficientcondition2}
Assume as in Lemma \ref{lem:integrable}. If $0$ is a non-removable singularity and 
\be\label{eq:blowupassumptionprop2}
u(x)=O(|x|^{2m-n-2})\quad\mbox{near }0,
\ee
then the sign conditions \eqref{eq:h-2} hold in some small punctured ball $B_\tau\setminus\{0\}$ for some $\tau>0$.
\end{prop}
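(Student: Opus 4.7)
The plan is to reduce Proposition \ref{prop:sufficientcondition2} to Proposition \ref{prop:sufficientcondition1} by showing that the hypotheses imply the divergence condition \eqref{eq:blowupassumptionprop}, after which the conclusion follows immediately. Equivalently, I will prove the contrapositive: if $\int_{B_{1/2}}u(y)^{p}/|y|^{n+2-2m}\,dy<\infty$ (where $p=(n+2m)/(n-2m)$) together with the pointwise bound $u(x)=O(|x|^{2m-n-2})$ near $0$, then $0$ must be a removable singularity, contradicting the hypothesis. Absolute continuity of the integral yields $\int_{B_r}u^p\,dy=o(r^{n+2-2m})$ as $r\to 0$, which is the key quantitative input.

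Starting from the integral representation in Lemma \ref{lem:integrable},
$$u(x)=c_{n,m}\int_{B_1}\frac{u(y)^p}{|x-y|^{n-2m}}\,dy+h_1(x)$$
with $h_1$ smooth, I estimate $u(x)$ as $x\to 0$ by splitting the integral into three regions. The far region $\{|y|\geq 2|x|\}$ contributes $O(1)$: using $|x-y|\geq|y|/2$ and the pointwise identity $|y|^{2m-n}\leq|y|^{2}\cdot|y|^{2m-n-2}$, this part is bounded by $C\int u^p/|y|^{n+2-2m}\,dy<\infty$. The annular middle region $\{|y|\leq 2|x|,\ |y-x|\geq|x|/2\}$ contributes $o(|x|^2)$ via the kernel bound $|x|^{2m-n}$ combined with the $o$-estimate $\int_{B_{2|x|}}u^p\,dy=o(|x|^{n+2-2m})$. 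For the delicate close region $\{|y-x|<|x|/2\}$, I rescale by setting $w_x(z):=(|x|/2)^{(n-2m)/2}u(x+(|x|/2)z)$ for $z\in B_1$; this function is a positive smooth solution of $(-\Delta)^m w_x=w_x^p$ on $B_1$ satisfying $\int_{B_1}w_x^p\,dz=o(|x|^{(n+4-2m)/2})\to 0$ and the pointwise bound $w_x(z)\leq C(|x|/2)^{(n-2m)/2-(n+2-2m)}$, which blows up as $|x|\to 0$. The close contribution equals $(|x|/2)^{-(n-2m)/2}\int_{B_1}w_x^p(z)|z|^{2m-n}\,dz$.

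The heart of the argument is to show $\int_{B_1}w_x^p|z|^{2m-n}\,dz=o(|x|^{(n-2m)/2})$, which will yield $u(x)=o(|x|^{-(n-2m)/2+0^+})$ and hence an improved pointwise bound. For this, I interpolate via H\"older's inequality with exponent $q$ slightly above $n/(2m)$ (so that $|z|^{2m-n}\in L^{q'}(B_1)$), using the estimate $\|w_x^p\|_{L^q}\leq \|w_x^p\|_{L^\infty}^{(q-1)/q}\|w_x^p\|_{L^1}^{1/q}$ to combine the vanishing $L^1$-norm with the pointwise upper bound. Iterating this bootstrap -- at each step feeding the improved pointwise bound on $u$ back into the rescaling -- the singular exponent decreases, and once it passes below the critical threshold $2m(n-2m)/(n+2m)$, the Riesz potential mapping $I_{2m}\colon L^{q}\to L^\infty$ (valid for $q>n/(2m)$) shows $u$ is bounded in a neighborhood of $0$. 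This contradicts the non-removability hypothesis.

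The main obstacle is making the bootstrap rigorous. The pointwise bound on $w_x$ is not uniform as $x\to 0$, so one cannot directly pass to a limit; and applying the integral representation to $w_x$ on $B_1$ introduces boundary contributions from $(-\Delta)^{i-1}w_x$ on $\partial B_1$ that must be controlled via interior elliptic estimates on the original equation on balls of radius $\sim|x|$ (where the equation is smooth, being away from $0$). Tracking how the $L^\infty$ bounds propagate through this iteration -- and verifying that the improvement at each step is strict enough to reach the Sobolev-subcritical regime in finitely many iterations -- is the technical crux of the argument.
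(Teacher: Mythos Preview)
Your reduction to Proposition \ref{prop:sufficientcondition1} via the contrapositive is the right strategy, and the observation $\int_{B_r}u^p\,\ud y=o(r^{n+2-2m})$ is correct. However, the bootstrap you outline has a genuine gap, and the paper's argument is far more direct.

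\textbf{The gap.} In the close-region estimate you interpolate $\|w_x^p\|_{L^q}\le\|w_x^p\|_{L^\infty}^{1-1/q}\|w_x^p\|_{L^1}^{1/q}$ with $q>n/(2m)$. Write $A=(n-2m+4)/2$; your own bounds give $\|w_x^p\|_{L^\infty}\le C|x|^{-pA}$ and $\|w_x^p\|_{L^1}=o(|x|^{A})$. Using $p+1=2n/(n-2m)$ one computes that the resulting exponent $A\big((p+1)/q-p\big)$ equals exactly $-A$ at the endpoint $q=n/(2m)$, so the close region contributes
\[
(|x|/2)^{-(n-2m)/2}\int_{B_1}\frac{w_x^p}{|z|^{n-2m}}\,\ud z \;\le\; C\,\epsilon(|x|)^{1/q}\,|x|^{-(n+2-2m)-\delta(q)},
\]
with $\delta(q)>0$ whenever $q>n/(2m)$. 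Thus the first iteration returns precisely the power $|x|^{-(n+2-2m)}$ you started from, worsened by $|x|^{-\delta(q)}$ and improved only by the non-quantitative factor $\epsilon^{1/q}$ coming from absolute continuity. Since $\epsilon(r)\to0$ carries no rate, it cannot compensate the fixed power loss $|x|^{-\delta(q)}$. The singular exponent never strictly decreases; the bootstrap does not start.

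\textbf{The paper's argument.} The two hypotheses combine in one line: since $\frac{2n}{n-2m}=p+1$ and $u(y)\le C|y|^{-(n+2-2m)}$ near $0$,
\[
\int_{B_{1/2}}u(y)^{\frac{2n}{n-2m}}\,\ud y=\int_{B_{1/2}}u^{p}\cdot u\,\ud y\le C\int_{B_{1/2}}\frac{u(y)^{p}}{|y|^{n+2-2m}}\,\ud y<\infty.
\]
Hence $u\in L^{2n/(n-2m)}(B_{1/2})$. As $u$ satisfies the integral representation \eqref{eq:dual-form}, the regularity result of Li (Corollary~1.1 in \cite{Li04}) yields $u\in L^\infty_{loc}(B_1)$, so $0$ is removable --- contradiction. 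The point you missed is that the integrability improvement to the critical level $L^{2n/(n-2m)}$ is immediate from multiplying the two hypotheses; no pointwise iteration is needed.
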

\begin{proof}
We will show that \eqref{eq:blowupassumptionprop} holds. If not, then
\[
\int_{B_{1/2}}\frac{u(y)^{\frac{n+2m}{n-2m}}}{|y|^{n+2-2m}}\,\ud y<\infty.
\]
Then by the assumption \eqref{eq:blowupassumptionprop2}, we have
\[
\int_{B_{1/2}} u(y)^{\frac{2n}{n-2m}}\,\ud y\le C\int_{B_{1/2}}\frac{u(y)^{\frac{n+2m}{n-2m}}}{|y|^{n+2-2m}}\,\ud y<\infty.
\]
Since $u$ satisfies \eqref{eq:dual-form}, we know from the regularity result, Corollary 1.1 in \cite{Li04}, that $u$ is locally bounded in $B_1$, and thus, $0$ is removable. This is a contradiction to the assumptions.
\end{proof}

\section{The upper bound and asymptotic radial symmetry}
\label{sec:ub}

For $x\in \R^n$ and $\lda>0$, we denote
\[
\xi^{x,\lda}= x+\frac{\lda^2(\xi-x)}{|\xi-x|^2} \quad \mbox{for } \xi\neq x, \quad \om^{x,\lda}=\{\xi^{x,\lda}: \xi\in \om\},
\]
and
\[
u_{x,\lda }(\xi)= \left(\frac{\lda}{|\xi-x|}\right)^{n-2\sigma} u(\xi^{x,\lda}).
\]
Note that $(\xi^{x,\lda})^{x,\lda}=\xi$ and $(u_{x,\lda })_{x,\lda}= u$. If $x=0$, we write $u_{0,\lda }$ as $u_\lda$.

\begin{lem} \label{lem:fix_error} Suppose $f\in C^1(B_{2})$ is positive and
\[
|\nabla \ln f|\le C_0 \quad \mbox{in }B_{3/2}
\]
for some constant $C_0>0$.  Then there exists a positive constant $0<r_0<1/2$ depending only on $n,\sigma$ and $C_0$ such that for every $x\in B_1$ and $0< \lda\le r_0$ there holds
\[
f_{x,\lda}(y) \le f(y) \quad \mbox{for }|y-x|\ge \lda, ~y\in B_{3/2}.
\]

\end{lem}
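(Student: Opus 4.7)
The plan is to execute a one-parameter moving-sphere start argument along each ray emanating from $x$. For a unit vector $\theta\in \mathbb{S}^{n-1}$ and $r\in (0,R_\theta]$ (where $R_\theta$ is the first exit time of $x+r\theta$ from $B_{3/2}$), I introduce the auxiliary one-variable function
\[
w_\theta(r) = r^{(n-2\sigma)/2} f(x+r\theta).
\]
A direct computation shows that, writing $y=x+r\theta$, the desired inequality $f_{x,\lambda}(y) \le f(y)$ is equivalent to $w_\theta(r)\ge w_\theta(\lambda^2/r)$. Since $r\ge \lambda$ implies $\lambda^2/r\le \lambda \le r$, it suffices to prove $w_\theta$ has a suitable one-sided monotonicity property on $[\lambda^2/R_\theta, R_\theta]$.

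The first step is the infinitesimal calculation
\[
w_\theta'(r) = r^{(n-2\sigma)/2-1} f(x+r\theta)\left[\frac{n-2\sigma}{2} + r\,\theta\cdot \nabla \ln f(x+r\theta)\right],
\]
so the assumption $|\nabla \ln f|\le C_0$ on $B_{3/2}$ yields $w_\theta'(r)\ge 0$ for all $r\in [0,r_1]$ with $r_1:=\min\{1/2,(n-2\sigma)/(2C_0)\}$, independently of $x$ and $\theta$. For $r\in [\lambda, r_1]$ (and $\lambda\le r_1$) this monotonicity immediately gives $w_\theta(r)\ge w_\theta(\lambda)\ge w_\theta(\lambda^2/r)$, which is the desired inequality in the inner region $|y-x|\le r_1$.

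For the outer region $r_1\le |y-x|\le R_\theta$, monotonicity of $w_\theta$ may fail, so I would bypass it using crude $L^\infty$ bounds on $f$ and make $\lambda$ small to win. Since $B_{3/2}$ is convex and $|\nabla \ln f|\le C_0$ there, $f$ oscillates at most by $e^{3C_0}$ on $B_{3/2}$; fixing any reference point $y_0$, set $M:=\sup_{B_{3/2}} f\le e^{3C_0} f(y_0)$ and $m:=\inf_{B_{3/2}} f\ge e^{-3C_0} f(y_0)$, so $M/m\le e^{6C_0}$. Then for $|y-x|\ge r_1$ and $\lambda\le r_0\le 1/2$,
\[
f_{x,\lambda}(y)=\Bigl(\frac{\lambda}{|y-x|}\Bigr)^{n-2\sigma} f(y^{x,\lambda})\le \Bigl(\frac{\lambda}{r_1}\Bigr)^{n-2\sigma} M,\qquad f(y)\ge m,
\]
so the desired inequality holds as soon as $(\lambda/r_1)^{n-2\sigma}(M/m)\le 1$, i.e. $\lambda \le r_1\, e^{-6C_0/(n-2\sigma)}$. (Here I use that $y^{x,\lambda}\in \overline{B_{3/2}}$, which follows from $|y^{x,\lambda}-x|=\lambda^2/|y-x|\le \lambda\le 1/2$ and $x\in B_1$.)

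Combining the two regions, I would set
\[
r_0 := \min\Bigl\{\tfrac{1}{2},\; r_1\, e^{-6C_0/(n-2\sigma)}\Bigr\}= \min\Bigl\{\tfrac{1}{2},\; \tfrac{n-2\sigma}{2C_0}\, e^{-6C_0/(n-2\sigma)}\Bigr\},
\]
which depends only on $n,\sigma,C_0$. The only mildly subtle point (really just a bookkeeping issue rather than a true obstacle) is the outer-region step: here one must absorb the dependence of the oscillation $M/m$ on $f$ into a quantity that depends solely on $C_0$, which is exactly what the convex mean-value estimate for $\ln f$ on $B_{3/2}$ accomplishes. The inner-region monotonicity step is the standard starting device for the method of moving spheres and is routine once the right function $w_\theta$ is identified.
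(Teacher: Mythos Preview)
Your proof is correct and follows essentially the same approach as the paper's own argument: both split into an inner region $|y-x|\le r_1=\min\{1/2,(n-2\sigma)/(2C_0)\}$ where the monotonicity of $r\mapsto r^{(n-2\sigma)/2}f(x+r\theta)$ is used, and an outer region $|y-x|\ge r_1$ handled by the crude $L^\infty$ oscillation bound on $f$ coming from $|\nabla\ln f|\le C_0$. The only cosmetic difference is the oscillation constant (you use $M/m\le e^{6C_0}$, the paper uses $e^{3C_0/2}$), which merely changes the explicit value of $r_0$.
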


\begin{proof}  For any $x\in B_1$, we have
\begin{align}
\frac{\ud }{\ud r} (r^{\frac{n-2\sigma}{2}} f(x+r\theta))&=r^{\frac{n-2\sigma}{2}-1}f (x+r\theta)\left(\frac{n-2\sigma}{2}-r \frac{\nabla f\cdot \theta}{f}\right) \nonumber \\&
\ge r^{\frac{n-2\sigma}{2}-1}f (x+r\theta)\left(\frac{n-2\sigma}{2}-C_0r\right) >0
\label{eq:start-0}
\end{align}
for all $0<r< \bar r:= \min\{\frac{1}{2}, \frac{n-2\sigma}{2C_0}\}$ and $\theta \in \mathbb{S}^{n-1} $.
For any $y\in B_{\bar r}(x)$, $0<\lda < |y-x|\le \bar r$, let $\theta=\frac{y-x}{|y-x|}$, $r_1=|y-x|$ and $r_2=\frac{\lda^2 }{|y-x|^2} r_1$. It follows from \eqref{eq:start-0} that
\[
r_2^{\frac{n-2\sigma}{2}} f(x+r_2 \theta)< r_1^{\frac{n-2\sigma}{2}} f(x+r_1 \theta).
\] Namely,
\be \label{eq:start-i0}
f_{x,\lda}(y)\le f(y), \quad 0<\lda< |y-x|\le \bar r.
\ee
Note that, for $|y-x|\ge \bar r$
\[
f_{x,\lda}(y) =\left(\frac{\lda}{|y-x|}\right)^{n-2\sigma} f(y^{x,\lda})\le \left(\frac{\lda}{\bar r}\right)^{n-2\sigma} \max_{B_{3/2}} f \le e^{\frac{3}{2}C_0} \left(\frac{\lda}{\bar r}\right)^{n-2\sigma} \inf_{B_{3/2}} f \le f(y),
\]
if we choose $\lda \le r_0$ with $e^{\frac{3}{2}C_0} (\frac{r_0}{\bar r})^{n-2\sigma}\le 1 $.

This finishes the proof.
\end{proof}

Let $u$ be a positive solution of \eqref{eq:main-3}. Replacing  $u(x)$ by $r^{\frac{n-2\sigma}{2}} u(rx)$ for $r=\frac 12$, we may consider the equation in $B_2$ for convenience, namely,   
\be \label{eq:main}
u(x)= \int_{B_2} \frac{u(y)^{\frac{n+2\sigma}{n-2\sigma}}}{|x-y|^{n-2\sigma}}\,\ud y +h(x) \quad \mbox{for } x\in B_2\setminus \{0\}
\ee with $u\in C( B_2\setminus \{0\})\cap L^\frac{n+2\sigma}{n-2\sigma}(B_2)$ and 
\be \label{eq:err-grad}
|\nabla \ln h|\le C_0 \quad \mbox{in }B_{3/2}.
\ee
If we extend $u$ to be identically $0$ outside $B_2$, then we have
\[
u(x)= \int_{\R^n} \frac{u(y)^{\frac{n+2\sigma}{n-2\sigma}}}{|x-y|^{n-2\sigma}}\,\ud y +h(y) \quad \mbox{for } x\in B_2\setminus \{0\}.
\]
Using the following two identities (see, e.g., page 162 of \cite{Li04}),
\be \label{eq:ID-1}
\left(\frac{\lda}{|\xi-x|}\right)^{n-2\sigma} \int_{|z-x|\ge \lda} \frac{u(z)^{\frac{n+2\sigma}{n-2\sigma}}}{|\xi^{x,\lda}-z|^{n-2\sigma}}\,\ud z= \int_{|z-x|\le \lda} \frac{u_{x,\lda}(z)^{\frac{n+2\sigma}{n-2\sigma}}}{|\xi-z|^{n-2\sigma}}\,\ud z
\ee
and
\be \label{eq:ID-2}
\left(\frac{\lda}{|\xi-x|}\right)^{n-2\sigma} \int_{|z-x|\le \lda} \frac{u(z)^{\frac{n+2\sigma}{n-2\sigma}}}{|\xi^{x,\lda}-z|^{n-2\sigma}}\,\ud z= \int_{|z-x|\ge \lda} \frac{u_{x,\lda}(z)^{\frac{n+2\sigma}{n-2\sigma}}}{|\xi-z|^{n-2\sigma}}\,\ud z,
\ee
one has
\[
u_{x,\lda}(\xi)=\int_{\R^n} \frac{u_{x,\lda}(z)^{\frac{n+2\sigma}{n-2\sigma}}}{|\xi-z|^{n-2\sigma}}\,\ud z +h_{x,\lda}(\xi) \quad \mbox{for } \xi\in B_2^{x,\lda}.
\]

Thus, for any $x\in B_1$ and $\lda<1$, we have,  for $\xi\in B_2 \setminus \big(\{0\} \cup  B_{\lda}(x) \big)$,
\[
u(\xi)-u_{x,\lda}(\xi)= \int_{|z-x|\ge \lda} K(x,\lda;\xi,z) (u(z)^{\frac{n+2\sigma}{n-2\sigma}}-u_{x,\lda}(z)^{\frac{n+2\sigma}{n-2\sigma}})\,\ud z+(h_{x,\lda}(\xi) -h(\xi)),
\]
where
\[
K(x,\lda;\xi,z)=\frac{1}{|\xi-z|^{n-2\sigma}}- \left(\frac{\lda}{|\xi-x|}\right)^{n-2\sigma} \frac{1}{ |\xi^{x,\lda}-z|^{n-2\sigma}}.
\]
It is elementary to check that
\[
K(x,\lda;\xi,z) >0 \quad \mbox{for all}\quad |\xi-x|>\lda>0, ~ |z-x|>\lda>0.
\]

This allows us to prove the upper bound and asymptotic radial symmetry in Theorem \ref{thm:A} using the moving sphere method introduced by Li-Zhu \cite{LZhu}. See also Li-Zhang \cite{LZhang} and Zhang \cite{Zhang} for more applications of the moving sphere method.

\begin{prop}\label{prop:upbound} Let $u\in C(B_2 \setminus \{0\})\cap L^\frac{n+2\sigma}{n-2\sigma}(B_2)$ be a positive solution of \eqref{eq:main}. Suppose $h\in C^1(B_2)$ is a positive function satisfying \eqref{eq:err-grad}.  Then
\[
\limsup_{x\to 0} |x|^{\frac{n-2\sigma}{2}} u(x) <\infty.
\]
\end{prop}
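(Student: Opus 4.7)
The plan is a contradiction argument combining a Pol\'a\v{c}ik--Quittner--Souplet-type point selection, a blow-up analysis of the rescaled integral equation, and the moving sphere method centered at the selected peak points.

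Suppose on the contrary that $\limsup_{x\to 0}|x|^{(n-2\sigma)/2}u(x)=\infty$. A standard doubling argument will produce a sequence $y_k\to 0$ with $|y_k|^{(n-2\sigma)/2}u(y_k)\to\infty$ and, setting $\lambda_k:=u(y_k)^{-2/(n-2\sigma)}$, $\lambda_k/|y_k|\to 0$ together with $u\le 2u(y_k)$ on $B_{R_k\lambda_k}(y_k)$ for some $R_k\to\infty$. I would then rescale to
\[
v_k(\xi):=\lambda_k^{(n-2\sigma)/2}u(y_k+\lambda_k\xi),\qquad \xi\in\Omega_k:=(B_2-y_k)/\lambda_k,
\]
so that $v_k(0)=1$, $v_k\le 2$ on $B_{R_k}$, and
\[
v_k(\xi)=\int_{\Omega_k}\frac{v_k(z)^{(n+2\sigma)/(n-2\sigma)}}{|\xi-z|^{n-2\sigma}}\,\ud z+\lambda_k^{(n-2\sigma)/2}h(y_k+\lambda_k\xi),
\]
the last term tending to $0$ locally uniformly because $\lambda_k\to 0$ and $h$ is bounded on $\overline{B_{3/2}}$.

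Next I would pass to the limit. Using the local $L^\infty$-bound, the uniform bound $\|v_k\|_{L^{(n+2\sigma)/(n-2\sigma)}(\Omega_k)}=\|u\|_{L^{(n+2\sigma)/(n-2\sigma)}(B_2)}$, and equicontinuity coming from the Riesz representation (together with a tail estimate to handle the growing domain), a subsequence will converge locally uniformly to a continuous positive function $v$ on $\R^n$ solving
\[
v(\xi)=\int_{\R^n}\frac{v(z)^{(n+2\sigma)/(n-2\sigma)}}{|\xi-z|^{n-2\sigma}}\,\ud z,\qquad v(0)=1,\quad v\le 2.
\]
By the classification of Chen--Li--Ou \cite{CLO} and Y.Y. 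Li \cite{Li04}, $v$ is then a standard bubble; in particular $v$ is nonconstant and continuous at $0$.

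The contradiction will come from running the moving sphere method for $u$ centered at $y_k$. The starting step at small $\mu$ follows from the smoothness of $u$ at $y_k\ne 0$, and Lemma \ref{lem:fix_error} makes the $h$-remainder $h-h_{y_k,\mu}$ nonnegative on $B_{3/2}\setminus B_\mu(y_k)$ for every $\mu\in(0,r_0]$ with $r_0>0$ uniform. Define $\bar\lambda(y_k)$ as the supremum of those $\mu$ for which $u_{y_k,\nu}\le u$ on $B_2\setminus(\{0\}\cup B_\nu(y_k))$ for all $\nu<\mu$. The crux of the argument is the claim
\[
\bar\lambda(y_k)\ \ge\ |y_k|\qquad\text{for all large }k.
\]
If this failed, then at $\mu=\bar\lambda(y_k)<|y_k|$ the inequality $u_{y_k,\bar\lambda}\le u$ would be saturated at some interior point of $B_2\setminus(\{0\}\cup B_{\bar\lambda}(y_k))$; the positivity of the kernel $K(y_k,\bar\lambda;\xi,z)$ together with the nonnegativity of the $h$-remainder would force $u\equiv u_{y_k,\bar\lambda}$ throughout this region. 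But $u$ is unbounded as one approaches $0$ inside the region (from the contradiction hypothesis), whereas $u_{y_k,\bar\lambda}$ remains bounded there --- its only singularity is the Kelvin image of $0$, which lies \emph{inside} $B_{\bar\lambda}(y_k)$ precisely when $\bar\lambda<|y_k|$. This singularity-propagation step, converting the local structure of \eqref{eq:main} into an obstruction for the moving sphere to stop early, is what I expect to be the main technical obstacle.

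Once $\bar\lambda(y_k)\ge|y_k|$ is in hand, rescaling translates it into $(v_k)_{0,\tau}\le v_k$ for all $\tau\in(0,|y_k|/\lambda_k)$ on the corresponding region, and since $|y_k|/\lambda_k\to\infty$, passing to the limit yields $v_{0,\tau}(\xi)\le v(\xi)$ for every $\tau>0$ and $\xi\in\R^n\setminus B_\tau$. The moving-sphere calculus lemma of Li--Zhu \cite{LZhu} then forces $v$ either to be a positive constant (impossible since no positive constant solves the Riesz potential equation on $\R^n$) or to be singular at $0$ (contradicting $v(0)=1<\infty$), closing the argument.
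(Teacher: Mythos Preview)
Your overall strategy---point selection, blow-up to a bubble, and contradiction via moving spheres---is exactly the paper's. The genuine gap is in how you run the moving spheres for $u$ centered at $y_k$.

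You claim that if $\bar\lambda(y_k)<|y_k|$ and the inequality $u_{y_k,\bar\lambda}\le u$ is touched at some interior point $\xi_0$, then positivity of the kernel together with $h-h_{y_k,\bar\lambda}\ge 0$ forces $u\equiv u_{y_k,\bar\lambda}$. This is false for the \emph{local} integral equation \eqref{eq:main}. After extending $u$ by $0$ outside $B_2$, the identity used in the method reads
\[
u(\xi)-u_{y_k,\mu}(\xi)=\int_{|z-y_k|\ge\mu}K(y_k,\mu;\xi,z)\Big(u(z)^{\frac{n+2\sigma}{n-2\sigma}}-u_{y_k,\mu}(z)^{\frac{n+2\sigma}{n-2\sigma}}\Big)\,\ud z+\big(h(\xi)-h_{y_k,\mu}(\xi)\big),
\]
and for $z\notin B_2$ one has $u(z)=0$ while $u_{y_k,\mu}(z)>0$, so the integrand is \emph{strictly negative} there. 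Hence at a touching point $\xi_0$ the nonnegative contributions over $\{|z-y_k|\ge\mu\}\cap B_2$ and from $h-h_{y_k,\mu}$ need only balance this strictly negative tail; nothing forces them to vanish. No strong maximum principle conclusion follows, and your ``singularity-propagation'' step does not go through.

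This is precisely the difficulty that drives the paper's argument. The paper works with the rescaled functions $w_j$ and never invokes a touching-point strong maximum principle. Instead, the bulk of the proof is a quantitative lower bound for the error term $J(\lambda,w_j,y)$ defined in \eqref{eq:definitionofJ}: using that $u\ge c_0$ on $B_2\setminus B_{1/2}$ gives, after rescaling, $w_j\ge c_0/u(\bar x_j)$ on the large annulus $\Omega_j\setminus\Sigma_j$, and this positive contribution is shown to dominate the negative tail over $\Omega_j^c$, yielding $J(\lambda,w_j,y)\ge C^{-1}(|y|-\lambda)\,u(\bar x_j)^{-1}$ (see \eqref{eq:lower-J}). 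With this in hand the sphere can be pushed to any prescribed $\lambda_0$ by a direct quantitative continuation argument. Your proposal contains no analogue of this tail estimate, and without it there is no reason the moving sphere for $u$ should reach radius $|y_k|$.
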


\begin{proof}
Suppose by contradiction that there exists a sequence of points $\{x_j\}_{j=1}^\infty \subset B_2$ converging to the origin such that
\[
|x_j|^{\frac{n-2\sigma}{2}}u(x_j)\to \infty\quad \mbox{as }j\to \infty.
\]
Consider
\[
v_j(x):=\left(\frac{|x_j|}{2}-|x-x_j|\right)^{\frac{n-2\sigma}{2}} u(x) \quad \mbox{for } |x-x_j| \le \frac{|x_j|}{2}.
\]
Since $u$ is positive and continuous in $\overline B_{|x_j|/2}(x_j)$, we can find a maximum point  $\bar x_j\in B_{|x_j|/2}(x_j)$ of $v_j$. That is
\[
v_j(\bar x_j)=\max_{|x-x_j|\leq \frac{|x_j|}{2}}v_j(x)>0.
\]
Let $2\mu_j:=\frac{|x_j|}{2}-|\bar x_j-x_j|>0.$
Then
\[
0<2\mu_j\leq \frac{|x_j|}{2}\quad\mbox{and}\quad \frac{|x_j|}{2}-|x-x_j|\ge\mu_j \quad \forall ~ |x-\bar x_j|\leq \mu_j.
\]
By the definition of $v_j$, we have
\be \label{eq:cl3}
(2\mu_j)^{\frac{n-2\sigma}{2}}u(\bar x_j)=v_j(\bar x_j)\ge v_j(x)\ge (\mu_j)^{\frac{n-2\sigma}{2}}u(x)\quad \forall ~ |x-\bar x_j|\leq \mu_j.
\ee
Thus, we have
\[
2^{\frac{n-2\sigma}{2}}u(\bar x_j)\ge u(x)\quad \forall ~ |x-\bar x_j|\leq \mu_j.
\]
We also have
\be\label{eq:cl4}
(2\mu_j)^{\frac{n-2\sigma}{2}}u(\bar x_j)=v_j(\bar x_j)\ge v_j(x_j)= \left(\frac{|x_j|}{2}\right)^{\frac{n-2\sigma}{2}}u(x_j)\to \infty\quad\mbox{as }i\to\infty.
\ee

Let
\[
w_j(y)=\frac{1}{u(\bar x_j)} u\left(\bar x_j +\frac{y}{u(\bar x_j)^{\frac{2}{n-2\sigma}}}\right),\quad  h_j(y)=\frac{1}{u(\bar x_j)} h\left(\bar x_j +\frac{y}{u(\bar x_j)^{\frac{2}{n-2\sigma}}}\right)  \quad \mbox{in }\om_j,
\]
where
\[
\om_j=\Big \{y\in \R^n:\bar x_j +\frac{y}{u(\bar x_j)^{\frac{2}{n-2\sigma}}} \in B_2\setminus \{0\} \Big\}.
\]
We also extend $w_j$ to be zero outside of $\Omega_j$.  Then
\begin{equation}\label{eq:wjjj}
 w_j(y) =\int_{\R^n} \frac{w_j(z)^{\frac{n+2\sigma}{n-2\sigma}}}{|y-z|^{n-2\sigma}}\,\ud z +h_j(y)  \quad \mbox{for }y\in  \om_j
\end{equation} 
and $w_j(0)=1$. Moreover, it follows from \eqref{eq:cl3} and \eqref{eq:cl4} that
\[
\|h_j\|_{C^1(\om_j)}\to 0, \quad  w_j(y)\leq 2^{\frac{n-2\sigma}{2}} \quad\mbox{in } B_{R_j},
\]
where \[R_j:=\mu_j u(\bar x_j)^{\frac{2}{n-2\sigma}}\to \infty \mbox{ as } j\to \infty.\] By the regularity results in Section 2.1 of \cite{JLX}, and the proof of Proposition 2.9 in \cite{JLX}, there exists $w>0$ such that
\[
w_j\to w \quad \mbox{in } C^{\al}_{loc}(\R^n)
\]
for some $\alpha>0$, and $w$ satisfies
\[
 w(y) =\int_{\R^n} \frac{w(z)^{\frac{n+2\sigma}{n-2\sigma}}}{|y-z|^{n-2\sigma}}\,\ud z   \quad \mbox{for }y \in \R^n.
\] 
Since $w(0)=1$, by the classification results in \cite{CLO} or \cite{Li04}, we have
\be \label{eq:cl5}
w(y)=\left(\frac{1+\mu^2|y_0|^2}{1+\mu^2|y-y_0|^2}\right)^{\frac{n-2\sigma}{2}}
\ee
for some $\mu>0$ and some $y_0\in\R^n$. 

On the other hand, we are going to show that, for every $\lda>0$
\be\label{eq:aim1}
w_{\lda}(y)\leq w(y)\quad \forall ~ |y-x|\ge\lda,
\ee
This contradicts to \eqref{eq:cl5}.

Let us arbitrarily fix $\lda_0>0$. Then for all $j$ large, we have $0<\lda_0<\frac{R_j}{10}$. Let
\[
\Sigma_j:=\Big \{y\in \R^n:\bar x_j +\frac{y}{u(\bar x_j)^{\frac{2}{n-2\sigma}}} \in B_1\setminus \{0\} \Big\} \subset \subset \om_j.
\]
We will show that for all sufficiently large $j$,
\be\label{eq:aim11}
(w_j)_{\lda_0}(y)\leq w_j(y)\quad \forall ~ |y|\ge\lda_0,\ y\in \Sigma_j.
\ee
Then \eqref{eq:aim1} follows from \eqref{eq:aim11} by sending $j\to \infty$.

By Lemma \ref{lem:fix_error}, there exist $\bar r>0$  such that, for all $0<\lda\le \bar r$ and $\bar x\in B_{\frac{1}{100}}$,
\be \label{eq:fix-er}
\left(\frac{\lda}{|y|}\right)^{n-2\sigma}h(y^{0,\lda}+\bar x) \le h(y+\bar x) \quad \mbox{for }|y|\ge \lda, ~y\in B_{149/100}.
\ee
Let $j$ large such that  $\lda_0 u(\bar x_j)^{-\frac{1}{n-2\sigma}} <\bar r$. It follows that, for every $0<\lda\le \lda_0$
\be \label{eq:moving-f}
(h_j)_{\lda}(y) \le h_j(y) \quad \mbox{for }y\in \Sigma_j \backslash B_{\lda}.
\ee

\emph{Claim 1.} There exists a positive real number $\lda_1$ independent of (large) $j$ such that for any $0<\lda<\lda_1$, we have
\[
(w_j)_{\lda }(y)\leq w_j(y) \quad \text{in } \Sigma_j \backslash B_{\lda}.
\]

Since $w_j\to w$ locally uniformly and $w$ is given in \eqref{eq:cl5}, we know that $w_j\ge c_0>0$ on $B_1$ for all $j$ sufficiently large. On the other hand, from the equation \eqref{eq:wjjj} and the regularity results in Section 2.1 in \cite{JLX}, we know that $|\nabla w_j|\le C_0<\infty$ on $B_1$ for all $j$ sufficiently large. By the proof of \eqref{eq:start-i0}, there exists a $r_{0}>0$ independent of $j$ (large) such that for all $0<\lda\le r_{0} $
\be \label{eq:start-i}
(w_j)_{\lda}(y)<w_j(y), \quad 0<\lda< |y|\le r_{0}.
\ee
Again, since $w_j\ge c_0>0$ on $B_1$ for all $j$ sufficiently large, we have
\[
w_j(x) \ge c_0^{\frac{n+2\sigma}{n-2\sigma}}\int_{B_1} |x-y|^{2\sigma-n}\,\ud y\ge \frac{1}{C} (1+|x|)^{2\sigma-n} \quad \mbox{in }\om_j
\]
for some constant $C>0$. 
It follows that we can find a sufficiently small $0<\lda_1\le r_{0}$ such that for all $0<\lda<\lda_1$
\[
(w_j)_{\lda}(y) \le \left(\frac{\lda_1}{|y|}\right)^{n-2\sigma} \max_{B_{r_0}} w_j \le C \left(\frac{\lda_1}{|y|}\right)^{n-2\sigma}  \le w_j(y) \quad \mbox{for all }|y|\ge r_0,\ y\in\om_j.
\]
Together with \eqref{eq:start-i}, we proved the Claim 1.

We define
\[
\bar \lda:=\sup \{0<\mu\le \lda_0\ |\ (w_j)_{\lda}(y)\leq w_{j}(y),\ \forall~|y-x_0|\geq \lda, ~y\in \Sigma_j,~\forall~ 0<\lda <\mu\},
\]
where $\lda_0$ is fixed at the beginning. By Claim 1, $\bar\lda$ is well defined.

\medskip

\emph{Claim 2}: $\bar\lda=\lda_0$ for all sufficiently large $j$.

\medskip

By \eqref{eq:ID-1} and \eqref{eq:ID-2}, we have, for any $\bar \lda\le \lda \le \bar \lda+\frac12$,  $y\in \Sigma_j$ with $|y|> \lda$,
\be\label{eq:movingsphere}
\begin{split}
&w_j(y)-(w_j)_{ \lda}(y)\\&= \int_{B_{ \lda}^c} K(0, \lda; y,z) \Big(w_j(z)^{\frac{n+2\sigma}{n-2\sigma}}-(w_j)_{ \lda}(z)^{\frac{n+2\sigma}{n-2\sigma}} \Big)\,\ud z+h_j(y)-(h_j)_{ \lda}(y)\\&
\ge \int_{\Sigma_j \setminus B_{ \lda }} K(0, \lda; y,z) \Big(w_j(z)^{\frac{n+2\sigma}{n-2\sigma}}-(w_j)_{ \lda}(z)^{\frac{n+2\sigma}{n-2\sigma}} \Big)\,\ud z +J( \lda, w_j,y),
\end{split}
\ee
where we have used \eqref{eq:moving-f} and
\begin{align}
J(\lda, w_j,y)&= \int_{\R^n \setminus \Sigma_j } K(0, \lda; y,z) \Big(w_j(z)^{\frac{n+2\sigma}{n-2\sigma}}-(w_j)_{ \lda}(z)^{\frac{n+2\sigma}{n-2\sigma}} \Big)\,\ud z  \nonumber\\
& =\int_{\om_j \setminus \Sigma_j } K(0, \lda; y,z) \Big(w_j(z)^{\frac{n+2\sigma}{n-2\sigma}}-(w_j)_{ \lda}(z)^{\frac{n+2\sigma}{n-2\sigma}} \Big)\,\ud z \nonumber\\
&\quad - \int_{\om_j^c} K(0, \lda; y,z) (w_j)_{ \lda}(z)^{\frac{n+2\sigma}{n-2\sigma}}  \,\ud z.\label{eq:definitionofJ}
\end{align}
For $z\in \R^n\setminus \Sigma_j$ and $\bar \lda \le \lda \le \bar \lda+1$, we have $|z| \ge \frac 12 u(\bar x_j)^{\frac{2}{n-2\sigma}}$ and thus
\[
(w_j)_{\lda}(z) \le \left(\frac{\lda }{|z|}\right)^{n-2\sigma} \max_{B_{\bar \lda+1}} w_j \le C u(\bar x_j)^{-2}.
\]
Since $u\ge c_0>0$ in $B_2\setminus B_{1/2}$, by the definition of $w_j$, we have
\be\label{eq:wj-l-1}
w_j(y)\ge \frac{c_0}{u(\bar x_j)} \quad \mbox{in }\om_j\setminus \Sigma_j.
\ee
It follows that for large $j$,
\[
 w_j(z)^{\frac{n+2\sigma}{n-2\sigma}}-(w_j)_{ \lda}(z)^{\frac{n+2\sigma}{n-2\sigma}} \ge \frac{1}{2}  w_j(z)^{\frac{n+2\sigma}{n-2\sigma}}\quad\mbox{in }\Omega_j\setminus\Sigma_j.
\]
Then, we claim that
\begin{align}
J(\lda, w_j,y) &\ge \frac{1}{2} \left(\frac{c_0 }{u(\bar x_j)}\right)^{\frac{n+2\sigma}{n-2\sigma}} \int_{\om_j \setminus \Sigma_j } K(0, \lda; y,z) \,\ud z- C \int_{\om_j^c} K(0, \lda; y,z) \left(\frac{\lda }{|z|}\right)^{n+2\sigma}   \,\ud z  \nonumber  \\&
\ge \begin{cases}
\frac{1}{C} (|y|-\lda) u(\bar x_j)^{-1},& \quad \mbox{if }\lda\le |y|\le \bar \lda +1,\\
\frac{1}{C}u(\bar x_j)^{-1}, & \quad \mbox{if } |y|>\bar \lda +1, y\in\Sigma_j.
\end{cases}
\label{eq:lower-J}
\end{align}
Indeed, since $ K(0, \lda; y,z)=0$  for $|y|=\lda$ and
\[
y\nabla_y \cdot K(0, \lda; y,z)\Big|_{|y|=\lda}= (n-2\sigma)|y-z|^{2\sigma-n-2}(|z|^2-|y|^2)>0
\]
for $|z|\ge  \bar \lda+2$, and using the positivity and smoothness of $K$, we have
\be \label{eq:lower-K}
\frac{\delta_1}{|y-z|^{n-2\sigma}}(|y|-\lda) \le K(0, \lda; y,z) \le \frac{\delta_2}{|y-z|^{n-2\sigma}}(|y|-\lda)
\ee
for $\bar \lda \le \lda \le |y|\le \bar \lda+1,~\bar\lda+2\le |z|\le \Lda<\infty,$ where $0<\delta_1\le \delta_2<\infty$. Also, if $\Lda$ is large enough, then
\[
0<c_0\le y\cdot \nabla_y (|y-z|^{n-2\sigma}K(0, \lda; y,z))\le C_0<\infty \quad\mbox{for all }|z|\ge\Lda, \ \bar \lda \le \lda \le |y|\le \bar \lda+1.
\]
Hence, \eqref{eq:lower-K} holds for $\bar \lda \le \lda \le |y|\le \bar \lda+1,~|z|\ge \Lda$ as well.

On the other hand, by the definition of $K(0, \lda; y,z)$, one can verify that for $|y|\ge \bar \lda+1$ and $|z|\ge\bar\lda+2$, 
\[
\frac{\delta_3}{|y-z|^{n-2\sigma}}\le K(0, \lda; y,z) \le \frac{1}{|y-z|^{n-2\sigma}}
\]
for some $0<\delta_3<1$. 

Therefore, for large $j$, $\lda \le |y|\le \bar \lda+1$ (recall that $\lda\le \bar \lda+\frac12$), we have
\begin{align*}
J(\lda, w_j,y)  &\ge \frac{1}{2} \left(\frac{c_0}{u(\bar x_j)}\right)^{\frac{n+2\sigma}{n-2\sigma}}  \int_{\om_j\setminus \Sigma_j}\frac{\delta_1}{|y-z|^{n-2\sigma}}(|y|-\lda) \,\ud z\\&
\quad - C \int_{\om_j^c} \frac{\delta_2}{|y-z|^{n-2\sigma}}(|y|-\lda)  \left(\frac{\lda }{|z|}\right)^{n+2\sigma}   \,\ud z\\&
\ge \frac{1}{C_1} (|y|-\lda) u(\bar x_j)^{-1}- \frac{1}{C_2} (|y|-\lda) u(\bar x_j)^{-\frac{2n}{n-2\sigma}} \\
&\ge \frac{1}{2 C_1} (|y|-\lda) u(\bar x_j)^{-1}
\end{align*}
and, for $|y|\ge\bar \lda+1$, $y\in \Sigma_j$, similarly, we have
\[
J(\lda, w_j,y)\ge \frac{1}{C_3} u(\bar x_j)^{-1}- \frac{1}{C_4} u(\bar x_j)^{-\frac{2n}{n-2\sigma}}\ge \frac{1}{2 C_3} u(\bar x_j)^{-1},
\]
where $C_1,C_2,C_3,C_4$ are positive constants and we have used $u(\bar x_j)\to \infty$. Hence, \eqref{eq:lower-J} is verified.

By \eqref{eq:movingsphere} and \eqref{eq:lower-J}, there exists $\va_1\in (0,1/2)$ (which depends on $j$) such that
\[
w_j(y)-(w_j)_{\bar \lda}(y) \ge \frac{\va_1}{|y|^{n-2\sigma}} \quad \forall~|y|\ge \bar \lda+1,\ y\in\Sigma_j.
\]
By the above inequality  and the explicit formula for $(w_{j})_{\lda}$, there exists $0<\va_2<\va_1$ such that
\begin{align}
w_j(y)-(w_j)_{\lda}(y)& \ge  \frac{\va_1}{|y|^{n-2\sigma}} +((w_j)_{\bar \lda}(y)-(w_j)_{\lda}(y)) \nonumber \\&
\ge  \frac{\va_1}{2|y|^{n-2\sigma}} \quad \forall~|y|\ge \bar \lda+1, ~\bar \lda\le \lda\le \bar\lda+\va_2.
\label{eq:movingsphere-2}
\end{align}
For $\va\in (0,\va_3]$ which we choose below, by \eqref{eq:movingsphere} and \eqref{eq:lower-J} we have, for $\bar \lda\le \lda\le \bar\lda+\va$ and for $\lda\le |y|\le \bar \lda+1$,
\begin{align*}
w_j(y)-(w_j)_{\lda}(y)&
\ge \int_{\lda\le|z|\le \bar \lda+1} K(0, \lda; y,z) \Big(w_j(z)^{\frac{n+2\sigma}{n-2\sigma}}-(w_j)_{ \lda}(z)^{\frac{n+2\sigma}{n-2\sigma}} \Big)\,\ud z\\&
\quad +\int_{ \bar\lda+2\le|z|\le \bar \lda+3} K(0, \lda; y,z) \Big(w_j(z)^{\frac{n+2\sigma}{n-2\sigma}}-(w_j)_{ \lda}(z)^{\frac{n+2\sigma}{n-2\sigma}} \Big)\,\ud z \\& \ge 
- C\int_{\lda \le |z|\le \lda+\va } K(0, \lda; y,z) (|z|-\lda)\,\ud z\\& \quad  +
\int_{\lda+\va \le|z|\le \bar \lda+1} K(0, \lda; y,z) \Big((w_j)_{ \bar \lda}(z)^{\frac{n+2\sigma}{n-2\sigma}}-(w_j)_{ \lda}(z)^{\frac{n+2\sigma}{n-2\sigma}} \Big)\,\ud z\\&
\quad +\int_{\bar\lda+2\le|z|\le \bar \lda+3} K(0, \lda; y,z) \Big(w_j(z)^{\frac{n+2\sigma}{n-2\sigma}}-(w_j)_{\lda}(z)^{\frac{n+2\sigma}{n-2\sigma}} \Big)\,\ud z, 
\end{align*}
where we have used 
 \[
|(w_j)(z)^{\frac{n+2\sigma}{n-2\sigma}}-(w_j)_{\lda}(z)^{\frac{n+2\sigma}{n-2\sigma}} | \le C(|z|-\lda)
\]
 in the second inequality. 
Because of \eqref{eq:movingsphere-2}, there exists $\delta_5>0$ such that
\[
w_j(z)^{\frac{n+2\sigma}{n-2\sigma}}-(w_j)_{ \lda}(z)^{\frac{n+2\sigma}{n-2\sigma}}  \ge \delta_5 \quad \mbox{for }
\bar \lda+2\le |z|\le \bar \lda+3.
\]
Since $\|w_j\|_{C^1(B_2)}\le C$ (independent of $j$), it follows that there exists some constant $C>0$ independent of $\va$ such that for $\bar \lda \le \lda \le \bar \lda+\va$,
\[
|(w_j)_{\bar \lda}(z)^{\frac{n+2\sigma}{n-2\sigma}}-(w_j)_{\lda}(z)^{\frac{n+2\sigma}{n-2\sigma}} | \le C(\lda-\bar \lda)\le C\va \quad \forall~  \lda\le |z|\le\bar\lda+1
\]
and for, $\lda\le |y|\le \bar \lda+1$, 
\begin{align*}
\int_{\lda+\va \le|z|\le \bar \lda+1} K(0, \lda; y,z)\,\ud z &\le \left|\int_{\lda+\va\le|z|\le \bar \lda+1}\left(\frac{1}{|y-z|^{n-2\sigma}}-\frac{1}{|y^{0,\lda}-z|^{n-2\sigma}}\right)\,\ud z \right| \\
&\quad+\int_{\lda+\va\le|z|\le \bar \lda+1} \left|\left(\frac{\lda}{|y|}\right)^{n-2\sigma}-1\right|\frac{1}{|y^{0,\lda}-z|^{n-2\sigma}}\,\ud z\\
&\le 
\begin{cases}
C(\va^{2\sigma-1}+1)|y^{0,\lda}-y|+C(|y|-\lda)\mbox{ if }\sigma\neq 1/2\\
C(|\ln\va|+1)|y^{0,\lda}-y|+C(|y|-\lda)\mbox{ if }\sigma= 1/2
\end{cases}\\
&\le C(\va^{2\sigma-1}+|\ln\va|+1)(|y|-\lda), 
\end{align*}
and  
\begin{align*}
\int_{\lda\le|z|\le  \lda+\va } K(0, \lda; y,z) (|z|-\lda)\,\ud z  
&\le \left|\int_{\lda\le|z|\le \lda+\va}\left(\frac{|z|-\lda}{|y-z|^{n-2\sigma}}-\frac{|z|-\lda}{|y^{0,\lda}-z|^{n-2\sigma}}\right)\,\ud z \right| \\
&\quad+\va\int_{\lda\le|z|\le \lda+\va} \left|\left(\frac{\lda}{|y|}\right)^{n-2\sigma}-1\right|\frac{1}{|y^{0,\lda}-z|^{n-2\sigma}}\,\ud z\\
&\le I +C\va(|y|-\lda),
\end{align*}
where
\begin{align*}
I&=\left|\int_{\lda\le|z|\le \lda+\va}\left(\frac{|z|-\lda}{|y-z|^{n-2\sigma}}-\frac{|z|-\lda}{|y^{0,\lda}-z|^{n-2\sigma}}\right)\,\ud z \right|\\
&\le C\va(\va^{2\sigma-1}+|\ln\va|+1)(|y|-\lda) \quad\mbox{if}\quad |y|\ge \lda+10\va.\end{align*}
When $\lda<|y|\le \lda+10\va$, then
\begin{align*}
I&\le \left|\int_{\lda\le|z|\le \lda+10(|y|-\lda)}\left(\frac{|z|-\lda}{|y-z|^{n-2\sigma}}-\frac{|z|-\lda}{|y^{0,\lda}-z|^{n-2\sigma}}\right)\,\ud z \right|\\
&\quad+\left|\int_{\lda+10(|y|-\lda)\le|z|\le \lda+\va}\left(\frac{|z|-\lda}{|y-z|^{n-2\sigma}}-\frac{|z|-\lda}{|y^{0,\lda}-z|^{n-2\sigma}}\right)\,\ud z \right|\\
&\le C (|y|-\lda)\int_{\lda\le|z|\le \lda+10(|y|-\lda)}\left(\frac{1}{|y-z|^{n-2\sigma}}+\frac{1}{|y^{0,\lda}-z|^{n-2\sigma}}\right)\,\ud z\\
&\quad+ C|y-y^{0,\lda}| \int_{\lda+10(|y|-\lda)\le|z|\le\lda+\va }\frac{|z|-\lda}{|y-z|^{n-2\sigma+1}}\,\ud z\\
&\le C (|y|-\lda) \sup_{w\in\R^n }\int_{\lda\le|z|\le \lda+100\va}\frac{1}{|w-z|^{n-2\sigma}}\,\ud z\\
&\le C (|y|-\lda) \va^{\frac{2\sigma}{n}}.
\end{align*}

It follows that for $\lda<|y|\le\bar\lda+1$, using \eqref{eq:lower-K},
\begin{align*}
&w_j(y)-(w_j)_{\lda}(y)\\&\ge - C\va^{\frac{2\sigma}{n}} (|y|-\lda)+\delta_1\delta_5 (|y|-\lda) \int_{ \lda+2\le|z|\le \bar \lda+3}\frac{1}{|y-z|^{n-2\sigma}} \,\ud z \\&
\ge \Big(\delta_1\delta_5 c-C\va^{\frac{2\sigma}{n}}\Big)(|y|-\lda) \ge 0
\end{align*}
if $\va$ is sufficiently small. This and \eqref{eq:movingsphere-2} contradict to the definition of $\bar \lda$ if $\bar \lda<\lda_0$. Therefore, Claim 2 follows.

Therefore, \eqref{eq:aim1} is proved and the proof of Proposition \ref{prop:upbound} is completed.
\end{proof}

We remark that the above arguments also apply to subcritical cases.

\begin{prop}\label{prop:symmetry} 
Assume as in Proposition \ref{prop:upbound}. Then
\[
u(x)=\bar u(|x|)(1+O(|x|)) \quad \mbox{as }x\to 0.
\]

\end{prop}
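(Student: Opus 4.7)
The plan is to apply the moving sphere method of Li-Zhu \cite{LZhu} pointwise near the origin, and prove a quantitative lower bound for the critical radius; the asymptotic radial symmetry then follows by standard geometric manipulations of the resulting semi-monotonicity.

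For each $x \in B_{1}\setminus \{0\}$, introduce the critical radius
\[
\bar\lda(x) := \sup\Big\{\mu>0 : u_{x,\lda}(y)\le u(y)\ \mbox{for all } y\in \big(B_2\setminus\{0\}\big)\setminus B_\lda(x),\ 0<\lda<\mu\Big\}.
\]
\textbf{Step 1. Positivity of $\bar\lda(x)$.} Mimicking Claim 1 in the proof of Proposition \ref{prop:upbound}, using Lemma \ref{lem:fix_error} to control the $h$-error and using the positivity and local $C^1$ regularity of $u$ on compact subsets of $B_2\setminus\{0\}$, one verifies $\bar\lda(x) > 0$ for every such $x$.

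\textbf{Step 2. Quantitative lower bound $\bar\lda(x)\ge c_0|x|$.} This is the main step. Suppose by contradiction there exists $x_k\to 0$ with $\mu_k := \bar\lda(x_k)/|x_k| \to 0$. Set $r_k = |x_k|$, $\xi_k = x_k/r_k \in \mathbb{S}^{n-1}$, and rescale
\[
v_k(y) := r_k^{\frac{n-2\sigma}{2}} u(r_k y),\qquad h_k(y):=r_k^{\frac{n-2\sigma}{2}} h(r_k y).
\]
Then $v_k$ satisfies an integral equation on $B_{2/r_k}\setminus\{0\}$ analogous to \eqref{eq:main} with error $h_k\to 0$ in $C^1_{loc}(\R^n)$. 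By Proposition \ref{prop:upbound}, $v_k(y)\le C|y|^{-\frac{n-2\sigma}{2}}$, and using the regularity estimates from \cite[Section 2.1]{JLX}, a subsequence converges in $C^\alpha_{loc}(\R^n\setminus\{0\})$ to a nonnegative $v_\infty$ solving the global equation \eqref{eq:main-6} on $\R^n\setminus\{0\}$. Under the scaling, the critical radius transforms as $\bar\lda_{v_k}(\xi_k)=\bar\lda(x_k)/r_k = \mu_k \to 0$, so in the limit $\bar\lda_{v_\infty}(\xi_\infty)=0$ at some $\xi_\infty \in \mathbb{S}^{n-1}$. However, by the classification of Chen-Li-Ou \cite{CLO1} and Li \cite{Li04}, any nontrivial $v_\infty$ is either a standard bubble (if $0$ is removable) or a radially symmetric Fowler-type solution, and in either case the moving sphere centered at any $\xi\neq 0$ admits a positive critical radius. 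The trivial case $v_\infty\equiv 0$ is excluded by using semi-monotonicity from $\bar\lda(x_k)>0$ at comparable scales to produce a uniform lower bound $v_k(\xi_k)\ge c>0$. This contradiction yields $\bar\lda(x)\ge c_0|x|$ in some $B_{\tau_0}\setminus\{0\}$.

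\textbf{Step 3. From $\bar\lda(x)\ge c_0|x|$ to asymptotic symmetry.} The moving sphere inequality, written in polar coordinates $y=x+\rho\omega$ about $x$, is the semi-monotonicity
\[
\rho_1^{\frac{n-2\sigma}{2}} u(x+\rho_1\omega) \ge \rho_2^{\frac{n-2\sigma}{2}} u(x+\rho_2\omega) \quad \mbox{whenever } 0<\rho_2\le\rho_1 \ \mbox{and}\ \sqrt{\rho_1\rho_2}\le \bar\lda(x).
\]
Given any two points $y_1,y_2\in \partial B_r$ with $r$ small, one selects an auxiliary point $x_*$ with $|x_*|$ comparable to $r$, and a direction $\omega_*$, so that $y_i = x_* + \rho_i\omega_*$ with $\sqrt{\rho_1\rho_2}\le c_0|x_*|$; the geometry can be arranged so that $\rho_1/\rho_2 = 1+O(r)$. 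Applying the semi-monotonicity at $x_*$ and then at a suitably reflected auxiliary point to obtain the reverse inequality yields
\[
\frac{u(y_1)}{u(y_2)} = 1 + O(r),
\]
uniformly in $y_1,y_2\in\partial B_r$, which immediately implies $u(x)=\bar u(|x|)(1+O(|x|))$ as $x\to 0$.

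\textbf{Expected main obstacle.} The principal technical difficulty is Step 2: the blow-up moving sphere analysis must carefully track the error terms from $h$ and from the ``missing tail'' outside $B_2$ (analogous to the quantity $J(\lda,w_j,y)$ in \eqref{eq:definitionofJ}), and one must ensure that the limiting critical radius $\bar\lda_{v_\infty}$ is actually the limit of $\bar\lda_{v_k}$. Ruling out $v_\infty\equiv 0$ is the subtle part, since at this stage no pointwise lower bound on $u$ is yet available.
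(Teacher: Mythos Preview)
Your blow-up approach in Step 2 has a genuine gap that the paper avoids entirely. The case $v_\infty\equiv 0$ cannot be excluded by the argument you suggest: semi-monotonicity from $\bar\lda(x_k)>0$ gives only \emph{relative} comparisons of values of $u$, not an absolute pointwise lower bound, so it cannot force $v_k(\xi_k)\ge c>0$. At this point in the paper no lower bound of the form $u(x)\gtrsim |x|^{-(n-2\sigma)/2}$ is available (that is proved only later, in Section \ref{sec:lowerbound}), and in fact when the singularity is removable the rescalings $v_k$ do converge to zero. You would then need a separate argument in that regime, and more seriously, you would need to show that $v_\infty\equiv 0$ along \emph{one} sequence implies enough about $u$ to conclude; this is not clear. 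A second, smaller issue is that passing the critical radius to the limit (deducing $\bar\lda_{v_\infty}(\xi_\infty)=0$ from $\bar\lda_{v_k}(\xi_k)\to 0$) requires a strict-inequality/strong-maximum-principle step for the limiting integral equation that you have not supplied.

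The paper's proof is direct and bypasses all of this. The key observation is that the upper bound $u(x)\le C|x|^{-(n-2\sigma)/2}$ from Proposition \ref{prop:upbound}, together with the \emph{trivial} pointwise lower bound $u\ge c_0:=4^{2\sigma-n}\int_{B_2}u^{\frac{n+2\sigma}{n-2\sigma}}$ coming straight from the equation, already suffice to control the error term $J(\lda,u,y)$ (the analogue of your ``missing tail'') and force it to be strictly positive for every $0<\lda<|x|<\va$ with $\va$ small. Running the same continuation argument as in Proposition \ref{prop:upbound} then yields $\bar\lda(x)=|x|$ (not merely $\ge c_0|x|$) for all $|x|<\va$. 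With $\bar\lda(x)=|x|$ the geometric Step 3 is carried out by an explicit choice: for $x_1,x_2\in\partial B_r$ one takes $x_3=x_1+\tfrac{\va}{4}\tfrac{x_1-x_2}{|x_1-x_2|}$ and $\lda=\sqrt{\tfrac{\va}{4}(|x_1-x_2|+\tfrac{\va}{4})}$, checks $\lda<|x_3|<\va$ (here $|x_3|^2-\lda^2=r^2$), and reads off $u(x_1)\le (1+8r/\va)^{\frac{n-2\sigma}{2}}u(x_2)$. No blow-up, no classification of global solutions, no compactness of critical radii is needed.
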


\begin{proof} We are going to show that there exists $0<\va <\min\{1/10,\bar r\}$ such that
\be \label{eq:aim2}
u_{x,\lda}(y) \le u(y) \quad \mbox{for }y\in  B_1\setminus B_{\lda}(x),~ 0<\lda<|x|<\va,
\ee
where $\bar r$ is the one such that  \eqref{eq:fix-er} hold for all $0<\lda\le \bar r$.

First of all, by the proof of \eqref{eq:start-i0}, for every $x\in B_{1/10}\setminus\{0\}$ there exists a $0<r_{x}<|x|$ such that for all $0<\lda\le r_{x} $
\[
u_{x,\lda}(y)<u(y), \quad 0<\lda< |y-x|\le r_{x}.
\]
By the equation \eqref{eq:main}, we have
\begin{equation}\label{eq:pointwiselowerbound}
u(x)\ge 4^{2\sigma-n}\int_{B_2} u^{\frac{n+2\sigma}{n-2\sigma}}(y)\,\ud y=:c_0>0,
\end{equation}
and thus, we can find $0<\lda_1\ll r_x$ such that, for every $0<\lda\le \lda_1$,
\[
u_{x,\lda}(y)<u(y), \quad  y\in B_2\setminus (B_{r_x}(x)\cup \{0\}).
\]
Combining the above two inequalities together, we have, for every $0<\lda\le \lda_1$
\[
u_{x,\lda}(y)<u(y), \quad  y\in B_2\setminus (B_{r_x}(x)\cup \{0\}).
\]
This ensures that
\[
\bar\lda(x):= \sup \{0<\mu\le |x| \ |\  u_{x,\lda}(y)\leq u(y),\quad \forall~y\in B_2\setminus (B_{\lda}(x)\cup \{0\}),~\forall~ 0<\lda <\mu\}
\]
is well defined and is positive. 

We are going to show that there exists $\va>0$ such that  $\bar \lda=|x|$ for all $|x|\le \va$. For brevity, we will denote $\bar \lda=\bar\lda(x)$ in the below.

For every $\bar \lda \le  \lda <|x|\le \bar r$, we have, for $y\in B_1$,
\begin{align*}
u(y)-u_{x,\lda}(y)\ge \int_{B_1\setminus B_{\lda}(x)}K(x,\lda; y,z) \Big(u(z)^{\frac{n+2\sigma}{n-2\sigma}}-u_{x,\lda}(z)^{\frac{n+2\sigma}{n-2\sigma}} \Big)\,\ud z+J(\lda, u, y),
\end{align*}
where
\begin{align*}
J(\lda,u,y)= &\int_{B_2\setminus B_{1}}K(x,\lda; y,z) \Big(u(z)^{\frac{n+2\sigma}{n-2\sigma}}-u_{x,\lda}(z)^{\frac{n+2\sigma}{n-2\sigma}} \Big)\,\ud z \\&
-\int_{B_2^c}  K(x,\lda; y,z) u_{x,\lda}(z)^{\frac{n+2\sigma}{n-2\sigma}} \,\ud z.
\end{align*}
For $y\in  B_1^c$ and $\lda<|x|<\va<1/10$, we have
\[
\left|x+\frac{\lda^2 (y-x)}{|y-x|^2}\right| \ge |x|-\frac{10}{9} \lda^2 \ge  |x|-\frac{10}{9} |x|^2 \ge \frac{8}{9}|x|.
\]
It follows from Proposition \ref{prop:upbound} that
\[
u\left(x+\frac{\lda^2 (y-x)}{|y-x|^2}\right) \le C |x|^{-\frac{n-2\sigma}{2}}.
\]
Thus for all $y\in B_1^c$,
\be \label{eq:asy-1}
\begin{split}
u_{x,\lda}(y) &= \left(\frac{\lda}{|y-x|}\right)^{n-2\sigma} u\left(x+\frac{\lda^2 (y-x)}{|y-x|^2}\right)\\
&\le C\lda^{n-2\sigma} |x|^{-\frac{n-2\sigma}{2}} \le C |x|^{\frac{n-2\sigma}{2}} \le C \va^{\frac{n-2\sigma}{2}}.
\end{split}
\ee
By \eqref{eq:pointwiselowerbound}, we have
\[
u_{x,\lda}(y) <u(y) \quad \mbox{for }y\in B_2\setminus B_1.
\]

For $y\in B_1\setminus (B_\lda(x)\cup\{0\})$, by \eqref{eq:pointwiselowerbound} and \eqref{eq:asy-1}, using the proof of \eqref{eq:lower-J}, we have
\begin{align}
J(\lda,u,y)&\ge \int_{B_2\setminus B_{1}}K(x,\lda; y,z) \Big(c_0^{\frac{n+2\sigma}{n-2\sigma}}- C^{\frac{n+2\sigma}{n-2\sigma}} \va^{n+2\sigma}\Big)\,\ud z\nonumber \\&
\quad  -C\int_{B_2^c}  K(x,\lda; y,z) \left(\Big(\frac{|x|}{|z-x|}\Big)^{n-2\sigma} |x|^{-\frac{n-2\sigma}{2}}\right)^{\frac{n+2\sigma}{n-2\sigma}}\,\ud z \nonumber \\&
 \ge \frac{1}{2} c_0^{\frac{n+2\sigma}{n-2\sigma}}  \int_{B_2\setminus B_{1}}K(x,\lda; y,z)  \,\ud z- C \va^{\frac{n+2\sigma}{2}} \int_{B_2^c}  K(x,\lda; y,z) \frac{1}{|z-x|^{n+2\sigma}}\,\ud z \nonumber\\&
  \ge \frac{1}{2} c_0^{\frac{n+2\sigma}{n-2\sigma}}  \int_{B_{19/10}\setminus B_{11/10}}K(0,\lda; y-x,z)  \,\ud z \nonumber \\
  &\quad-C \va^{\frac{n+2\sigma}{2}} \int_{B_{19/10}^c}  K(0,\lda; y-x,z) \frac{1}{|z|^{n+2\sigma}}\,\ud z \nonumber\\&
 \ge \frac{1}{C_1} (|y-x|-\lda),
 \nonumber
\end{align}
if we let $\va$ be sufficiently small, where $C_1>0$ is a constant independent of $x$. If $\bar \lda<|x|$, using the proof of Proposition \ref{prop:upbound}, we will arrive at a contradiction to the definition of $\bar \lda$. The proof is identical and we omit  it here. Therefore, \eqref{eq:aim2} is proved.

Let $r$ be small (less than $\va^2$), $x_1,x_2\in \pa B_r$ be such that
\[
u(x_1)=\max_{\pa B_r} u, \quad u(x_2)=\min_{\pa B_r} u.
\]
Let $x_3=x_1+\frac{\va(x_1-x_2)}{4|x_1-x_2|}$ and $\lda=\sqrt{\frac{\va}{4}(|x_1-x_2|+\frac{\va}{4})}$.  It follows from \eqref{eq:aim2} that
\[
u_{x_3,\lda }(x_2)\le u(x_2).
\]
Notice that
\begin{align*}
u_{x_3,\lda }(x_2)&= \left(\frac{\lda}{|x_1-x_2|+\frac{\va}{4}}\right)^{n-2\sigma}u(x_1)\\&
=\left(\frac{1}{4|x_1-x_2|/\va+1}\right)^{\frac{n-2\sigma}{2}}u(x_1)\\
&\ge (\frac{1}{8r/\va+1})^{\frac{n-2\sigma}{2}}u(x_1).
\end{align*}
Thus
\[
\max_{\pa B_r} u \le (8r/\va+1)^{\frac{n-2\sigma}{2}} \min_{\pa B_r} u.
\]
The proposition follows immediately.
\end{proof}

\section{The lower bound and removability of the singularity}
\label{sec:lowerbound}

One consequence of the upper bound in Proposition \ref{prop:upbound} is the following Harnack inequality and derivative estimates, which will be used very frequently in the rest of the paper.

\begin{lem}\label{lem:harnack}
Assume as in Proposition \ref{prop:upbound}. Then for all $0<r<1/4$, we have
\begin{equation}\label{eq:harnack}
\sup_{B_{3r/2}\setminus B_{r/2}} u\le C \inf_{B_{3r/2}\setminus B_{r/2}} u, 
\end{equation}
where $C$ is a positive constant independent of $r$.
\end{lem}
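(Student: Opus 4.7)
My plan is to combine the moving sphere inequality \eqref{eq:aim2} from Proposition \ref{prop:symmetry} with a compactness argument, splitting into cases based on the size of $r$. For $r\ge\va^2$ (where $\va$ is as in \eqref{eq:aim2}; I shall take it at most $1/12$), the annulus $\overline{B_{3r/2}\setminus B_{r/2}}$ is a compact subset of $B_2\setminus\{0\}$ on which $u$ is continuous and, by \eqref{eq:pointwiselowerbound}, bounded below by the positive constant $c_0$; hence $\sup u/\inf u$ is bounded there uniformly in such $r$.

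For $r<\va^2$, I will reduce a general comparison $u(x_1)/u(x_2)$ (with $s_i=|x_i|\in[r/2,3r/2]$, $e_i=x_i/s_i$, and by symmetry $s_1\le s_2$) via the intermediate point $\tilde x_2:=s_2 e_1$. The angular step on $\partial B_{s_2}$ comparing $\tilde x_2$ and $x_2$ repeats the last paragraph of the proof of Proposition \ref{prop:symmetry}, producing $u(\tilde x_2)/u(x_2)\le(1+8s_2/\va)^{(n-2\sigma)/2}\le C$. The radial step comparing $x_1=s_1 e_1$ and $\tilde x_2=s_2 e_1$ proceeds by two applications of \eqref{eq:aim2}: first with center $x_3=c e_1$ for some $c$ slightly larger than $s_1 s_2/(s_1+s_2)$ and radius $\lda=\sqrt{(s_1-c)(s_2-c)}$, which makes $\tilde x_2=x_1^{x_3,\lda}$ and, evaluated at $\tilde x_2$, yields $u(x_1)/u(\tilde x_2)\le((s_2-c)/(s_1-c))^{(n-2\sigma)/2}\to(s_2/s_1)^{n-2\sigma}\le 3^{n-2\sigma}$ as $c\downarrow s_1 s_2/(s_1+s_2)$; second with center $x_3'=(\va/2)e_1$ and radius $\lda'=\sqrt{(\va/2-s_1)(\va/2-s_2)}$, which, evaluated at $x_1$, gives $u(\tilde x_2)/u(x_1)\le((\va/2-s_1)/(\va/2-s_2))^{(n-2\sigma)/2}$, bounded since $\va/2-s_i\in[3\va/8,\va/2]$ whenever $\va\le 1/12$. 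Chaining the angular and radial estimates yields $u(x_1)/u(x_2)\le C$.

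The main obstacle will be verifying the constraint $0<\lda<|x_3|<\va$ of \eqref{eq:aim2} at each application. The formula $x_3=x_1+(\va/4)(x_1-x_2)/|x_1-x_2|$ from Proposition \ref{prop:symmetry} satisfies $|x_3|^2=|x_1|^2+\lda^2$ only when $|x_1|=|x_2|$, so it does not carry over to cross-radius comparisons, and centers along the ray must be chosen specifically: for the first center, $c<s_1$ forces $\lda^2=(s_1-c)(s_2-c)<c^2$ exactly when $c>s_1 s_2/(s_1+s_2)$, with the boundary case $c\downarrow s_1 s_2/(s_1+s_2)$ (where $\lda\uparrow c$) handled by continuity of the moving-sphere inequality; for the second, $c'=\va/2$ gives $\lda'<c'$ automatically because $c'>s_2>s_1 s_2/(s_1+s_2)$, and all parameters stay below $\va$ since $s_i<3\va^2/2$.
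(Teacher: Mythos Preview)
Your argument is correct and takes a genuinely different route from the paper. The paper rescales $v(y)=r^{(n-2\sigma)/2}u(ry)$, invokes the upper bound of Proposition~\ref{prop:upbound} to obtain $v\le C$ on $B_2\setminus B_{1/10}$, then for each $|x|=1$ splits the right-hand side of the rescaled integral equation into a local Riesz potential over $B_{9/10}(x)$ and a tail $g_x+h_r$; the tail is shown by a direct kernel comparison to satisfy a Harnack inequality on $B_{1/2}(x)$, after which Theorem~2.3 of \cite{JLX} yields the Harnack for $v$ on $B_{1/2}(x)$, and a covering argument finishes. Your approach instead bypasses the external regularity theory entirely by exploiting the moving-sphere comparison \eqref{eq:aim2} already obtained inside the proof of Proposition~\ref{prop:symmetry}: the angular estimate on each sphere (exactly the last paragraph of that proof) together with two radial reflections along the common ray, plus a plain compactness argument for $r$ bounded away from $0$. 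Your proof is more self-contained, since it avoids appealing to \cite{JLX}; the paper's rescaling-plus-regularity argument, on the other hand, is the one that immediately feeds into the higher-order derivative bounds of Lemma~\ref{lem:estimates} (which need the same rescaling and the local estimates of \cite{JLX} anyway) and does not require verifying the geometric constraints $0<\lambda<|x_3|<\varepsilon$ that you had to check separately for each reflection. One cosmetic point: in your first radial step you do not actually need to pass to the limit $c\downarrow s_1s_2/(s_1+s_2)$; any fixed $c$ strictly between $s_1s_2/(s_1+s_2)$ and $s_1$ already gives a ratio $(s_2-c)/(s_1-c)$ bounded in terms of $s_2/s_1\le 3$.
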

\begin{proof}
Let $v(y)= r^{\frac{n-2\sigma}{2}} u(ry)$. Then
\begin{equation}\label{eq:rescaledequation}
v(y)= \int_{B_{2/r}} \frac{v(z)^{\frac{n+2\sigma}{n-2\sigma}}}{|y-z|^{n-2\sigma }}\,\ud z+h_r(y), 
\end{equation}
where $h_r(y)= r^{\frac{n-2\sigma}{2}} h(ry)$. By Proposition \ref{prop:upbound}, we have $v\le C$ in $B_{2}\setminus B_{1/10}$. For $|x|=1$, let 
\[
g_x(y)=\int_{B_{2/r}\setminus B_{9/10}(x)} \frac{v(z)^{\frac{n+2\sigma}{n-2\sigma}}}{|y-z|^{n-2\sigma }}\,\ud z.
\]
Then for any $y_1,y_2\in B_{1/2}(x)$, we have
\begin{align*}
g_x(y_1)&=\int_{B_{2/r}\setminus B_{9/10}(x)} \frac{|y_2-z|^{n-2\sigma }}{|y_1-z|^{n-2\sigma }}\frac{v(z)^{\frac{n+2\sigma}{n-2\sigma}}}{|y_2-z|^{n-2\sigma }}\,\ud z\\
&\le C \int_{B_{2/r}\setminus B_{9/10}(x)} \frac{v(z)^{\frac{n+2\sigma}{n-2\sigma}}}{|y_2-z|^{n-2\sigma }}\,\ud z\le Cg_x(y_2).
\end{align*}
Hence, $g$ satisfies the Harnack inequality in $B_{1/2}(x)$. Since $h$ also satisfies the Harnack inequality in $B_{1/2}(x)$ and
\[
v(y)=\int_{B_{9/10}(x)} \frac{v(z)^{\frac{n+2\sigma}{n-2\sigma}}}{|y-z|^{n-2\sigma }}\,\ud z+g_x(y)+ h_r(y)\quad\mbox{in } B_{1/2}(x), 
\]
it follows from Theorem 2.3 in \cite{JLX} that
\[
\sup_{B_{1/2}(x)} v \le C \inf_{B_{1/2}(x)} v.
\]
A covering argument leads to 
\[
\sup_{1/2\le |y|\le 3/2} v \le C \inf_{1/2\le |y|\le 3/2 } v.
\]
We complete the proof by rescaling back to $u$.
\end{proof}

\begin{lem} \label{lem:estimates} Assume as in Proposition \ref{prop:upbound}. Suppose also that $h$ is smooth in $B_2$. Then we have  
\[
|\nabla ^k u(x)| \le C(k) |x|^{-\frac{n-2\sigma}{2}-k}, \quad \forall~ x \in B_1\setminus \{0\},
\]
for all $k=0,1,\dots$, where $C(k) >0$ is independent of $x$. 
\end{lem}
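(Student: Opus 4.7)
The plan is to exploit the scale invariance of the integral equation together with a bootstrap-type interior regularity argument. Fix $x_0 \in B_1\setminus\{0\}$ and set $r = |x_0|$. Rescale by defining $v(y) = r^{(n-2\sigma)/2}u(ry)$, which by \eqref{eq:rescaledequation} satisfies
\[
v(y) = \int_{B_{2/r}}\frac{v(z)^{(n+2\sigma)/(n-2\sigma)}}{|y-z|^{n-2\sigma}}\,\ud z + h_r(y), \qquad h_r(y) = r^{(n-2\sigma)/2}h(ry).
\]
By Proposition \ref{prop:upbound}, $v$ is bounded on the annulus $\{1/10 \le |y| \le 2\}$ by a constant independent of $r$, and since $h$ is smooth on $B_2$ the function $h_r$ satisfies $\|h_r\|_{C^k(B_2)} \le C(k)$ uniformly in $r\in(0,1)$. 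It suffices to bound $\|v\|_{C^k}$ on $\{3/4 \le |y| \le 5/4\}$ independently of $r$, since then evaluating $\nabla^k v$ at $y = x_0/r$ and using $\nabla^k u(x_0) = r^{-(n-2\sigma)/2 - k}\nabla^k v(x_0/r)$ yields the claim.

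For the bound on $v$, I would split the Riesz integral, for $y$ in a neighborhood of $x_0/r$, as
\[
\int_{B_{2/r}}\frac{v(z)^{(n+2\sigma)/(n-2\sigma)}}{|y-z|^{n-2\sigma}}\,\ud z = \int_{B_{1/8}(y)}\frac{v(z)^{(n+2\sigma)/(n-2\sigma)}}{|y-z|^{n-2\sigma}}\,\ud z + g(y),
\]
where $g(y)$ is the integral over $B_{2/r}\setminus B_{1/8}(y)$. Since the kernel stays away from its singularity, $g$ is $C^\infty$ on $B_{1/16}(x_0/r)$ with derivative bounds controlled by $\|v\|_{L^{(n+2\sigma)/(n-2\sigma)}(B_{2/r})}$; the latter, after unscaling, equals $\|u\|_{L^{(n+2\sigma)/(n-2\sigma)}(B_2)}$, which is finite by hypothesis.

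For the local piece, I would invoke the interior regularity theory for the integral equation $v = I_{2\sigma}(v^{(n+2\sigma)/(n-2\sigma)}) + (\text{smooth})$ developed in Section~2.1 of \cite{JLX}: the $L^\infty$ bound on $v$ over $B_{1/8}(y)$ together with bounded $h_r + g$ gives a $C^\alpha$ bound on $v$ near $x_0/r$; the improved regularity then propagates into $v^{(n+2\sigma)/(n-2\sigma)}$, which upon reinsertion into the Riesz potential yields a gain of $2\sigma$ derivatives, and combined with the $C^\infty$ smoothness of $h_r + g$ one bootstraps to arbitrary $C^k$ bounds independent of $r$. The main technical point is simply verifying that this bootstrap loses no $r$-dependent constants — which it does not, because every estimate in the argument only uses $\|v\|_{L^\infty}$ on a fixed annulus and the uniform $C^k$ bounds on $h_r$ and $g$. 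Rescaling back then delivers the desired estimate $|\nabla^k u(x_0)| \le C(k)\,|x_0|^{-(n-2\sigma)/2 - k}$.
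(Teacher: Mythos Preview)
Your overall strategy---rescale to $v$, use Proposition~\ref{prop:upbound} to bound $v$ on a fixed annulus, invoke the local estimates from Section~2.1 of \cite{JLX}, and scale back---is exactly the paper's proof. However, your bound on the tail $g$ contains an error: the claimed identity $\|v\|_{L^{(n+2\sigma)/(n-2\sigma)}(B_{2/r})}=\|u\|_{L^{(n+2\sigma)/(n-2\sigma)}(B_2)}$ is false. A change of variables gives
\[
\|v\|_{L^p(B_{2/r})}^p = r^{-(n-2\sigma)/2}\,\|u\|_{L^p(B_2)}^p,\qquad p=\tfrac{n+2\sigma}{n-2\sigma},
\]
which blows up as $r\to 0$, so your argument as written does not yield uniform $C^k$ bounds on $g$.

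The fix is immediate and is implicit in the paper's handling of the tail in Lemma~\ref{lem:harnack}. From the integral representation one has $0\le g(y)\le v(y)-h_r(y)\le v(y)$, so $g$ is bounded on the annulus by Proposition~\ref{prop:upbound}. For the derivatives, whenever $|y-z|\ge 1/8$ one has $\big|\nabla_y^k|y-z|^{2\sigma-n}\big|\le C(k)\,|y-z|^{2\sigma-n}$ (the extra factor $|y-z|^{-k}$ is at most $8^k$), hence
\[
|\nabla^k g(y)|\le C(k)\int_{B_{2/r}\setminus B_{1/8}(y)}\frac{v(z)^{(n+2\sigma)/(n-2\sigma)}}{|y-z|^{n-2\sigma}}\,\ud z = C(k)\,g(y)\le C(k),
\]
uniformly in $r$. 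With this correction the rest of your argument goes through.
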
 
\begin{proof} For any fixed $x$, let $r=|x|$ and $v(y)= r^{\frac{n-2\sigma}{2}} u(ry)$. Then we have $v$ satisfies \eqref{eq:rescaledequation}. By Proposition \ref{prop:upbound}, we have $v\le C$ in $B_{3/2}\setminus B_{1/2}$. By the local estimates (in Section 2.1 of  \cite{JLX}), and making use of the smoothness of $h$, we have $|\nabla^k v(x)| \le C$ for $|x|=1$. Scaling back to $u$, the proposition is proved. 
\end{proof}

The Harnack inequality will lead to that $\liminf_{x\to 0}u(x)=\infty$ if $0$ is not a removable singularity.
\begin{lem}\label{prop:infinity}
Assume as in Proposition \ref{prop:upbound}, and suppose $h$ is smooth in $B_1$. If 
\[
\limsup_{x\to 0}u(x)=\infty,
\]
then
\[
\liminf_{x\to 0}u(x)=\infty.
\]
\end{lem}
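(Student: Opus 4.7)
The argument proceeds by contradiction: suppose $M := \liminf_{x\to 0} u(x) < \infty$. Extract $y_j \to 0$ with $u(y_j) \to M$, set $r_j := |y_j|$, and perform a blow-up by defining $v_j(z) := r_j^{(n-2\sigma)/2} u(r_j z)$ on $B_{2/r_j}\setminus\{0\}$. A direct computation (using the identities \eqref{eq:ID-1}--\eqref{eq:ID-2} for scaling) shows $v_j$ satisfies the analogous integral equation on $B_{2/r_j}$ with inhomogeneity $r_j^{(n-2\sigma)/2} h(r_j z) \to 0$ in $C^1_{loc}(\mathbb{R}^n)$. Proposition~\ref{prop:upbound} gives $v_j(z) \le C|z|^{-(n-2\sigma)/2}$ uniformly in $j$, while by construction $v_j(y_j/r_j) = r_j^{(n-2\sigma)/2} u(y_j) \to 0$.

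Using Lemma~\ref{lem:estimates} and Arzel\`a--Ascoli, after extracting a subsequence with $y_j/r_j \to e \in \mathbb{S}^{n-1}$, one has $v_j \to v_\infty$ in $C^\alpha_{loc}(\mathbb{R}^n \setminus \{0\})$; dominated convergence (using $v_j^{p} \le C|w|^{-(n+2\sigma)/2}$ as dominating function for passing to the limit under the integral sign) yields that the non-negative $v_\infty$ satisfies
\[
v_\infty(z) = \int_{\mathbb{R}^n}\frac{v_\infty(w)^{(n+2\sigma)/(n-2\sigma)}}{|z-w|^{n-2\sigma}}\,dw,\qquad z\in\mathbb{R}^n\setminus\{0\},
\]
together with $v_\infty(e) = 0$. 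The integral representation forces $v_\infty \equiv 0$. Combining $v_j \to 0$ in $C_{loc}$ with Lemma~\ref{lem:harnack} (applied to $v_j$ on each annulus $B_{3R/2}\setminus B_{R/2}$), one concludes that for every fixed $R>0$,
\[
\sup_{|x|\in[r_j/R,\,Rr_j]} |x|^{(n-2\sigma)/2}\,u(x)\;\longrightarrow\;0\quad\text{as }j\to\infty.
\]

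To extend this vanishing from the annuli $[r_j/R, Rr_j]$ to every small radius, I use Proposition~\ref{prop:symmetry}: by the assumption $\limsup_{x\to 0}u(x)=\infty$ and the contradiction hypothesis, both $\bar u(r_j)\to M$ and $\bar u(\rho_k)\to\infty$ hold along sequences accumulating at $0$. Since $\bar u$ is continuous, the Intermediate Value Theorem implies that $\{r:\bar u(r)=M+1\}$ accumulates at $0$; choosing new blow-up centers accordingly (and noting that the blow-up argument above applies at any sequence where $u$ is bounded, not only the original $y_j$), the resulting annuli $[\tilde r_j/R,\, R\tilde r_j]$ can be arranged to cover a deleted neighborhood $B_{r_0}\setminus\{0\}$. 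Hence $|x|^{(n-2\sigma)/2} u(x) \to 0$ as $x \to 0$.

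The contradiction is then closed by a bootstrap argument: from $|x|^{(n-2\sigma)/2} u(x) \to 0$ one has, for any prescribed $\varepsilon>0$, a bound $u(x)\le \varepsilon|x|^{-(n-2\sigma)/2}$ on some $B_\delta\setminus\{0\}$. Feeding this into the integral equation \eqref{eq:main} and using the standard estimate
\[
\int_{B_\delta}\frac{dy}{|x-y|^{n-2\s
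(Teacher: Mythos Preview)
Your approach is genuinely different from the paper's, and it has a real gap in the covering step.

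The paper's argument is short and direct: pick $x_i\to 0$ with $u(x_i)\to\infty$; by the Harnack inequality (Lemma~\ref{lem:harnack}), $\inf_{\partial B_{r_i}}u\to\infty$ with $r_i=|x_i|$. Then a maximum principle fills in the annuli $B_{r_i}\setminus B_{r_{i+1}}$: when $\sigma\ge 1$ one has $-\Delta(u-h)\ge 0$ in $B_2\setminus\{0\}$ (since $-\Delta|x|^{2\sigma-n}\ge 0$ for $\sigma\ge 1$), so the minimum of $u-h$ on each annulus is attained on its boundary spheres; when $\sigma<1$ one uses the Caffarelli--Silvestre extension $V$ and the maximum principle for $\mathrm{div}(t^{1-2\sigma}\nabla V)=0$ on half-annuli in $\R^{n+1}_+$. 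Either way, $\min_{B_{r_i}\setminus B_{r_{i+1}}}u\to\infty$, hence $\liminf u=\infty$.

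Your blow-up argument correctly shows that along any sequence $s_j\to 0$ with $\bar u(s_j)$ bounded, the rescalings $v_j\to 0$ locally uniformly, and hence $\sup_{|x|\in[s_j/R,\,Rs_j]}|x|^{(n-2\sigma)/2}u(x)\to 0$ for each fixed $R$. The problem is the next step: the Intermediate Value Theorem does produce radii $\tilde r_j\to 0$ with $\bar u(\tilde r_j)=M+1$, but there is no control on the ratios $\tilde r_{j+1}/\tilde r_j$. These ratios may tend to $0$, so no fixed $R$ makes the annuli $[\tilde r_j/R,\,R\tilde r_j]$ cover a punctured neighborhood of the origin. In the gaps between annuli you have no information, and nothing prevents $|x|^{(n-2\sigma)/2}u(x)$ from staying bounded away from $0$ there. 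Consequently you cannot conclude $|x|^{(n-2\sigma)/2}u(x)\to 0$ globally, and the bootstrap never starts. The paper's maximum principle is exactly the tool that transports information from the boundary spheres $\partial B_{r_i}$ across the intervening annuli; your argument lacks a substitute for it. (In fact, ruling out the oscillatory scenario you are facing---$|x|^{(n-2\sigma)/2}u$ hitting $0$ along one sequence but not tending to $0$---is precisely the content of Proposition~\ref{prop:zero}, which requires the Pohozaev machinery.)
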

\begin{proof}
Let $\{x_i\}$ be such that $r_i=|x_i|\to 0$ and $u(x_i)\to\infty$. By Lemma \ref{lem:harnack}, $$\inf_{\partial B_{r_i}}u\to\infty.$$

If $\sigma\ge 1$, then we have $-\Delta (u-h)\ge 0$ in $B_2\setminus\{0\}$. Hence, $$\min_{B_{r_i}\setminus B_{r_{i+1}}}(u-h)=\min_{\partial B_{r_i}\cup\partial B_{r_{i+1}}} (u-h).$$ Since $h$ is smooth in $B_1$, we have $\min_{B_{r_i}\setminus B_{r_{i+1}}}u\to\infty$.

If $\sigma< 1$, we let
\[
v(x)=\int_{B_2} \frac{u(y)^{\frac{n+2\sigma}{n-2\sigma}}}{|x-y|^{n-2\sigma}}\,\ud y \quad\mbox{for all }x\in\R^n\setminus\{0\}.
\]
Then $v>0$ in $\R^n$. By Proposition \ref{prop:upbound}, we have $v(x)=u(x)-h(x)\le C|x|^{-\frac{n-2\sigma}{2}}$ when $|x|$ is small and $u^{\frac{n+2\sigma}{n-2\sigma}}\in L^p(B_1)$ for all $1\le p<\frac{2n}{n+2\sigma}$. Consequently, $v(x)\le C|x|^{2\sigma-n}$ when $|x|$ is large. Therefore, $(-\Delta)^\sigma v$ is well-defined in $B_2\setminus\{0\}$, and 
$$(-\Delta)^\sigma v(y)=c_0u(y)^{\frac{n+2\sigma}{n-2\sigma}}=c_0 u^{\frac{4\sigma}{n-2\sigma}}v +  c_0 u^{\frac{4\sigma}{n-2\sigma}} h>0 \quad\mbox{in}\quad B_2\setminus\{0\},$$
where $c_0$ is a positive constant. Let $V$ be the extension of $v$ in $\R^{n+1}_+=\{(x,t): x\in\R^n, t>0\}$ defined in \eqref{eq:poissonextension}. Then
\[
\begin{cases}
\mathrm{div} (t^{1-2\sigma}\nabla_{x,t} V)=0\quad\mbox{in }\R^{n+1}_+\\
-\lim_{t\to 0}t^{1-2\sigma}\partial_tV(x,t)=c_1 u^{\frac{4\sigma}{n-2\sigma}}v +  c_1 u^{\frac{4\sigma}{n-2\sigma}} h\quad\mbox{on }B_2\setminus\{0\}
\end{cases}
\]
for some positive constant $c_1$. Let $V_j=V(r_jx,r_jt)$, $u_j(x)=u(r_jx), h_j(x)=h(r_jx), v_j(x)=v(r_jx)$. Then
\[
\begin{cases}
\mathrm{div} (t^{1-2\sigma}\nabla_{x,t} V_j)=0\quad\mbox{in }\R^{n+1}_+\\
-\lim_{t\to 0}t^{1-2\sigma}\partial_tV_j(x,t)=c_0 r_j^{2\sigma} u_j^{\frac{4\sigma}{n-2\sigma}}v_j +  c_0 r_j^{2\sigma}u_j^{\frac{4\sigma}{n-2\sigma}} h_j\quad\mbox{on }B_{2/r_j}\setminus\{0\}.
\end{cases}
\]
By Proposition \ref{prop:upbound}, we know $r_j^{2\sigma}u_j^{\frac{4\sigma}{n-2\sigma}} $ is locally bounded. By the Harnack inequality (Proposition 2.6 in \cite{JLX0}), we have
\[
\sup_{\partial''\mathcal{B}^+} V_j\le C(\inf_{\partial''\mathcal{B}^+} V_j+1),
\]
where $\partial''\mathcal{B}^+=\{(x,t): t>0, |x|^2+t^2=1\}$. Therefore, by the maximum principle,
\[
\inf_{r_{j+1}\le |x|\le r_j} v(x)\ge \inf_{r_{j+1}\le \sqrt{|x|^2+t^2}\le r_j} V(x,t)= \inf_{\sqrt{|x|^2+t^2}= r_j, r_{j+1}} V(x,t)\ge c\min_{\partial B_{r_i}\cup\partial B_{r_{i+1}}} v-C.
\]
Since $v=u-h$ on $B_1\setminus\{0\}$ and $h$ is bounded on $B_1$, we have $\min_{B_{r_i}\setminus B_{r_{i+1}}}u\to\infty$.
\end{proof}

We can improve Lemma \ref{prop:infinity} to the following.

\begin{lem}\label{lem:remove}
Assume as in Proposition \ref{prop:upbound}. If 
\[
\lim_{x\to 0}|x|^{\frac{n-2\sigma}{2}} u(x)=0,
\]
then $u$ can be extended as a continuous function near the origin $0$.
\end{lem}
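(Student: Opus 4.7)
The plan is to first show $u$ is bounded near $0$; continuity then follows, since $u\in L^\infty_{loc}$ implies $u^{(n+2\sigma)/(n-2\sigma)}\in L^\infty_{loc}$, whose Riesz potential in \eqref{eq:main} is H\"older continuous, and $h\in C^1(B_2)$.

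Set $\alpha=(n-2\sigma)/2$ and $\beta=(n+2\sigma)/(n-2\sigma)$. The hypothesis furnishes a function $F(r)\to 0$ as $r\to 0$ such that $u(y)\le F(r)|y|^{-\alpha}$ for $|y|\le r$. To bootstrap this decay into a pointwise bound, I would iterate the integral equation \eqref{eq:main}. The core ingredient is the scale-invariant estimate
\[
\int_{B_R}\frac{|y|^{-\alpha\beta}}{|x-y|^{n-2\sigma}}\,\ud y\le C_*|x|^{-\alpha},
\]
valid for every $R>0$ with $C_*$ independent of $R$ and $|x|$; it follows from the substitution $y=|x|z$ and the critical exponent balance $\alpha\beta-2\sigma=\alpha$. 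Fixing a small $\delta>0$ and writing $F=F(\delta)$, the hypothesis together with this estimate, the global upper bound from Proposition \ref{prop:upbound} on $B_2\setminus B_\delta$, and $|x-y|\ge|y|/2$ when $|x|\le\delta/2$ and $|y|\ge\delta$, yield
\[
u(x)\le C_*F^\beta|x|^{-\alpha}+E_\delta\quad\text{on }B_{\delta/2},
\]
with $E_\delta=O(\delta^{-\alpha})+\|h\|_{L^\infty}$ depending only on $\delta$.

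Now iterate: feeding $u\le A_k|y|^{-\alpha}+B_k$ back into \eqref{eq:main}, and using $(a+b)^\beta\le 2^{\beta-1}(a^\beta+b^\beta)$ together with $\int_{B_{\delta/2}}|x-y|^{-(n-2\sigma)}\,\ud y\le C\delta^{2\sigma}$, one obtains the recursions
\[
A_{k+1}=C_*A_k^\beta,\qquad B_{k+1}=C\delta^{2\sigma}B_k^\beta+E_\delta,
\]
on the same ball $B_{\delta/2}$. Since $\beta>1$ and $A_1=C_*F^\beta$ can be made arbitrarily small by shrinking $\delta$, the sequence $A_k$ decays to $0$ super-exponentially. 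Provided the $B$-map has a stable attracting fixed point $B^*<\infty$, one gets $B_k\to B^*$; passing $k\to\infty$ then yields $u\le B^*$ on $B_{\delta/2}\setminus\{0\}$.

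The main technical obstacle is the stability of the $B$-iteration: because $E_\delta\sim\delta^{-\alpha}$ and the critical balance gives $C\delta^{2\sigma}E_\delta^{\beta-1}=O(1)$, the quantity measuring contractivity does \emph{not} shrink as $\delta\to 0$. A practical fix is to refine the outer estimate dyadically and to absorb part of the constant into the $A$-recursion so as to effectively reduce the universal constant. A conceptually cleaner alternative is to rewrite \eqref{eq:main} as a linear integral equation $u=I_{2\sigma}(Vu)+h$ with potential $V=u^{4\sigma/(n-2\sigma)}$, note that the hypothesis implies $\|V\|_{L^{n/(2\sigma),\infty}(B_\delta)}\le CF^{4\sigma/(n-2\sigma)}\to 0$ as $\delta\to 0$, and run a Brezis--Kato / Moser-type iteration (the smallness of the weak-$L^{n/(2\sigma)}$ norm of $V$ closes the bootstrap) to conclude $u\in L^\infty_{loc}$, whence the continuous extension at $0$.
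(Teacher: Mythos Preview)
You correctly identify the gap in your own argument: the $B$-recursion sits exactly at criticality, $C\delta^{2\sigma}E_\delta^{\beta-1}=O(1)$, so it does not contract and the iteration as written does not close. Your proposed fixes are only gestures---the dyadic refinement is not spelled out, and for the Brezis--Kato alternative you would still need to supply (or cite) the bootstrap in the integral-equation setting; that result is essentially Corollary~2.2 of \cite{JLX}, which is precisely what the paper invokes at the end, but you have not shown how to get from the weak-$L^{n/(2\sigma)}$ smallness of $V$ to its hypothesis.

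The paper avoids the criticality issue by a different, more elementary device: a \emph{subcritical} barrier. Fix any $\mu\in(0,\tfrac{n-2\sigma}{2})$ and, for small $\va>0$, set $f(x)=M|x|^{-\mu}+\va|x|^{2\sigma-n-\mu}$ on $B_\tau\setminus\{0\}$ and $f=u$ on $B_2\setminus B_\tau$. The hypothesis $u\le\delta|x|^{-(n-2\sigma)/2}$ on $B_\tau$ and the scaling identity $\int_{\R^n}|x-y|^{2\sigma-n}|y|^{-\mu-2\sigma}\,\ud y\le C|x|^{-\mu}$ give, for $\delta$ small, that the linearized map $f\mapsto I_{2\sigma}(u^{4\sigma/(n-2\sigma)}f)+h$ takes $f$ below itself on $B_\tau$. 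A touching-point argument (the $\va$-term forces $f\ge u$ near $0$, so any contact is at an interior point) then yields $u\le f$; sending $\va\to0$ gives $u\le M|x|^{-\mu}$. Since $\mu<\tfrac{n-2\sigma}{2}$, this puts $u^{4\sigma/(n-2\sigma)}$ in $L^p$ for some $p>n/(2\sigma)$, and continuity follows from the cited regularity result. The point is that choosing $\mu$ strictly below the critical exponent breaks the scale-invariance that stalls your $A_k$/$B_k$ scheme.
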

\begin{proof}
Here, we will use some barrier functions that were used in \cite{JLX}. 

For every $\delta>0$, choose $\tau>0$ small such that
\[
u(x)\le \delta |x|^{-\frac{n-2\sigma}{2}}\quad\mbox{in }B_\tau\setminus\{0\}.
\]
Arbitrarily fix $\va>0$ and $\mu \in (0,\frac{n-2\sigma}{2})$. Let $M$ be a large positive constant to be chosen in the below. Define
\[
f(x)=
\begin{cases}
M|x|^{-\mu}+\va |x|^{2\sigma-n-\mu},\quad 0<|x|<\tau\\
u(x),\quad \tau<|x|<2.
\end{cases}
\]
Note that for every $0<\mu<n-2\sigma$ and $0<|x|<2$, we have
\begin{align*}
\int_{\R^n} \frac{1}{|x-y|^{n-2\sigma}|y|^{\mu+2\sigma}}\,\ud y&= |x|^{2\sigma-n} \int_{\R^n}
\frac{1}{||x|^{-1}x-|x|^{-1}y|^{n-2\sigma}|y|^{\mu+2\sigma}}\,\ud y \\&
= |x|^{-\mu+2\sigma} \int_{\R^n} \frac{1}{||x|^{-1}x-z|^{n-2\sigma}|z|^{\mu+2\sigma}}\,\ud z \\&
 \le C\Big( \frac{1}{n-2\sigma-\mu}+\frac{1}{\mu} +1\Big)|x|^{-\mu},
\end{align*}
where we did the change of variables $y=|x|z$. Hence, for $0<|x|<2$,
\begin{align*}
\int_{B_\tau}\frac{u^{\frac{4\sigma}{n-2\sigma}}(y)f(y)}{|x-y|^{n-2\sigma}}\,\ud y&\le \delta^{\frac{4\sigma}{n-2\sigma}}\int_{\R^n}\frac{f(y)}{|x-y|^{n-2\sigma}|y|^{2\sigma}}\,\ud y\\
&\le C\delta^{\frac{4\sigma}{n-2\sigma}} f(x)\\
&<\frac{1}{2}f(x)\quad\mbox{for sufficiently small } \delta.
\end{align*}
For $0<|x|<\tau$, $\bar x=\tau x/|x|$, we have
\begin{align*}
\int_{B_2\setminus B_\tau}\frac{u^{\frac{4\sigma}{n-2\sigma}}(y)f(y)}{|x-y|^{n-2\sigma}}\,\ud y&= \int_{B_2\setminus B_\tau}\frac{|\bar x-y|^{n-2\sigma}}{|x-y|^{n-2\sigma}}\frac{u^{\frac{n+2\sigma}{n-2\sigma}}(y)}{|\bar x-y|^{n-2\sigma}}\,\ud y\\
&\le 2^{n-2\sigma}\int_{B_2\setminus B_\tau}\frac{u^{\frac{n+2\sigma}{n-2\sigma}}(y)}{|\bar x-y|^{n-2\sigma}}\,\ud y\\
&\le 2^{n-2\sigma} u(\bar x)\\
&\le 2^{n-2\sigma}\max_{\partial B_\tau}u.
\end{align*}
Then for $0<|x|<\tau$, 
\[
h(x)+ \int_{B_2}\frac{u^{\frac{4\sigma}{n-2\sigma}}(y)f(y)}{|x-y|^{n-2\sigma}}\,\ud y\le h(x)+2^{n-2\sigma}\max_{\partial B_\tau}u+\frac{1}{2}f(x)<f(x)
\]
if $M\ge \max_{\partial B_\tau}u$ is sufficiently large.

Since $u(x)\le \delta |x|^{-\frac{n-2\sigma}{2}}$ in $B_\tau\setminus\{0\}$, by the definition of $f$, there exists $\rho\in (0,\tau)$ (depending on $\va$) such that $f\ge u$ in $B_\rho\setminus\{0\}$, and also $f>u$ near $\partial B_\tau$. We claim that $u(x)\le f(x)$ in $B_\tau\setminus\{0\}$. If not, let
\[
\bar t:=\inf\{t>1: tf_i>u\quad\mbox{in }B_\tau\setminus\{0\}\}.
\]
The we have $1<\bar t<\infty$, and there exist $\bar x\in B_\tau\setminus\overline B_\rho$ such that $\bar t f(\bar x)=u(\bar x)$. Since, for $0<|x|<\tau$,
\[
\bar tf(x)\ge \int_{B_2}\frac{u^{\frac{4\sigma}{n-2\sigma}}(y)\bar tf(y)}{|x-y|^{n-2\sigma}}\,\ud y+\bar th(x)\ge \int_{B_2}\frac{u^{\frac{4\sigma}{n-2\sigma}}(y)\bar tf(y)}{|x-y|^{n-2\sigma}}\,\ud y+h(x),
\]
we have
\[
\bar tf(x)-u(x)\ge  \int_{B_2}\frac{u^{\frac{4\sigma}{n-2\sigma}}(y)(\bar tf(y)-u(y))}{|x-y|^{n-2\sigma}}\,\ud y.
\]
This leads to a contradiction by evaluating at $\bar x$. Hence 
\[
u(x)\le f(x)\le M|x|^{-\mu}+\va |x|^{2\sigma-n-\mu} \quad\mbox{in }B_\tau\setminus\{0\}.
\]
Notice that $\tau$ does not depend on $\va$. Then by sending $\va\to 0$, we prove
\[
u(x)\le M|x|^{-\mu}\quad\mbox{in }B_\tau\setminus\{0\}.
\]
Hence $u^{\frac{4\sigma}{n-2\sigma}}\in L^p$ for some $p>\frac{n}{2\sigma}$. By the regularity result, Corollary 2.2 in \cite{JLX}, we have that $u$ is continuous in $B_\tau$.
\end{proof}

Now we are ready to prove the lower bound in Theorem \ref{thm:A}.

\begin{prop}\label{prop:zero}
 Assume as in Proposition \ref{prop:upbound}. Suppose in addition that $h$ is smooth in $B_2$,  $h$ is a positive function in $\R^n$ satisfying \eqref{eq:assumptionsh} if $\sigma$ is not an integer, and $h$ satisfies  $(-\Delta)^\sigma h=0$ in $B_2$.   If 
 \[
\liminf_{x\to 0}|x|^{\frac{n-2\sigma}{2}} u(x)=0,
\]
then 
\[
\lim_{x\to 0}|x|^{\frac{n-2\sigma}{2}} u(x)=0,
\]
and consequently, $u$ can be extended as a continuous function near the origin $0$.

\end{prop}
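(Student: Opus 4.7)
The strategy is contradiction. Suppose to the contrary that $L := \limsup_{x \to 0} |x|^{(n-2\sigma)/2} u(x) > 0$ (otherwise Lemma \ref{lem:remove} applies directly). Combined with the asymptotic radial symmetry from Proposition \ref{prop:symmetry} and the standing hypothesis $\liminf_{x \to 0} |x|^{(n-2\sigma)/2} u(x) = 0$, there are two sequences of radii $r_i, r_i' \downarrow 0$ with
\[
r_i^{(n-2\sigma)/2} \bar u(r_i) \to 0 \quad \text{and} \quad (r_i')^{(n-2\sigma)/2} \bar u(r_i') \to L.
\]

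Next, blow up at both scales. Let $w_i(x) = r_i^{(n-2\sigma)/2} u(r_i x)$ and $w_i'(x) = (r_i')^{(n-2\sigma)/2} u(r_i' x)$; each satisfies a rescaled version of \eqref{eq:main} on $B_{2/r_i}\setminus\{0\}$ (resp.\ $B_{2/r_i'}\setminus\{0\}$) with inhomogeneous term $r_i^{(n-2\sigma)/2} h(r_i\,\cdot) \to 0$ in $C^1_{\mathrm{loc}}$. The upper bound of Proposition \ref{prop:upbound}, the Harnack inequality of Lemma \ref{lem:harnack}, and the derivative estimates of Lemma \ref{lem:estimates} yield $C^k_{\mathrm{loc}}(\R^n\setminus\{0\})$-precompactness of both families. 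Along subsequences, $w_i \to w_\infty$ and $w_i' \to w_\infty'$ locally uniformly. The asymptotic symmetry passes to each limit, so both are radial; by the choice of $r_i$ (together with Harnack in annuli), $w_\infty \equiv 0$ on $\R^n\setminus\{0\}$, while $w_\infty'$ is a positive radial solution of the global integral equation \eqref{eq:main-6}.

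The contradiction comes from the Pohozaev identity of Appendix \ref{appendix:pohozaev}. Associated to \eqref{eq:main} is a ``Pohozaev invariant'' $\mathcal P(u; r)$, defined by surface integrals on $\partial B_r$ (via the Caffarelli--Silvestre extension when $\sigma \notin \mathbb Z$); the identity asserts that $\mathcal P(u; r)$ is independent of $r \in (0, 1)$, using both the equation for $u$ and the condition $(-\Delta)^\sigma h = 0$. By the conformal invariance, $\mathcal P(u; r_i) = \mathcal P(w_i; 1) + o(1)$ and similarly for $r_i'$. The first tends to $\mathcal P(w_\infty; 1) = 0$ since $w_\infty \equiv 0$, whereas the second tends to $\mathcal P(w_\infty'; 1) > 0$, because any positive radial solution of \eqref{eq:main-6} on $\R^n \setminus \{0\}$ with the scaling $|x|^{(n-2\sigma)/2} w_\infty' \not\to 0$ has strictly positive Pohozaev invariant (a standard Emden--Fowler computation that distinguishes singular Fowler-type solutions from the removable bubbles). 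This contradicts the $r$-independence of $\mathcal P$, so $L=0$, and Lemma \ref{lem:remove} finishes the proof.

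The most delicate step is the non-integer case, where the Pohozaev identity has to be executed on the Caffarelli--Silvestre extension in $\R^{n+1}_+$. The cancellations needed for the $r$-independence of $\mathcal P$ hinge on sharp pointwise and integral bounds for the Poisson extensions of $u$ and $h$, which is precisely the content of Appendices \ref{appendix:poisson} and \ref{appendix:technical}; in particular, the blow-up compatibility of the extension is essential for the identity $\mathcal P(u; r_i) = \mathcal P(w_i; 1) + o(1)$. A subsidiary technicality, present even when $\sigma$ is an integer, is that \eqref{eq:main} is localized to $B_2$, so $w_i$ only solves the global equation up to a tail error; this is handled by the decay from Proposition \ref{prop:upbound}, combined with the smallness of $r_i^{(n-2\sigma)/2}h(r_i\,\cdot)$.
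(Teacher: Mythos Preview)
Your outline has the right architecture—blow up, invoke the scale-independence of a Pohozaev invariant, and produce two different limiting values—but the second limit is where it breaks. You pick $r_i'$ so that $(r_i')^{(n-2\sigma)/2}\bar u(r_i')\to L>0$ and assert that the rescaled limit $w_\infty'$ is a positive radial solution of the global equation \eqref{eq:main-6} with $|x|^{(n-2\sigma)/2}w_\infty'\not\to 0$ and hence $\mathcal P(w_\infty';1)>0$. Neither claim is justified. The only information the choice of $r_i'$ gives is the value of $w_\infty'$ on the unit sphere; nothing prevents $w_\infty'$ from being a standard bubble $\big(\lambda/(\lambda^2+|x|^2)\big)^{(n-2\sigma)/2}$ centered at the origin, for which the singularity at $0$ is removable and $\mathcal P(w_\infty';1)=0$. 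In that case your two limits agree and there is no contradiction. Even if you could rule out the bubble, the assertion that every singular global solution has a strictly positive (or merely nonzero) Pohozaev invariant is not ``a standard Emden--Fowler computation'' for general $\sigma$: the classification of solutions to \eqref{eq:main-7} is explicitly left open in the paper, and no sign information on $\mathcal P$ for Fowler-type solutions is available beyond the cases $\sigma=1,2$ treated elsewhere.

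The paper circumvents this completely by a different blow-up. It selects $r_i$ to be \emph{local minima} of $r^{(n-2\sigma)/2}\bar u(r)$ and rescales by $u(r_ie_n)^{-1}$ rather than by $r_i^{(n-2\sigma)/2}$, setting $\varphi_i(y)=u(r_iy)/u(r_ie_n)$. The two normalization conditions $\varphi_i(e_n)=1$ and $\frac{d}{dr}\big(r^{(n-2\sigma)/2}\bar\varphi_i(r)\big)\big|_{r=1}=0$ force the limit to be the \emph{explicit} function $\tfrac12|y|^{2\sigma-n}+\tfrac12$ (not a solution of \eqref{eq:main-6} at all; the fundamental-solution piece records a Dirac mass and the constant records the tail). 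The Pohozaev boundary integral of this explicit function is then computed directly and compared with that of $|y|^{2\sigma-n}$ (obtained as the $\lambda\to 0$ limit of bubbles, for which $P=0$); the difference is a single explicit nonzero term. No information about global singular solutions is needed. Your first sequence $r_i$ and the conclusion $\mathcal P(u;r_i)\to 0$ are fine and match the paper; it is the second sequence that should be replaced by this explicit-limit argument.
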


\begin{proof} We suppose by contradiction that $\limsup_{x\to 0} |x|^{\frac{n-2\sigma}{2}} u(x)=C>0 $. It follows from Lemma \ref{prop:infinity} that
\[
\liminf_{|x|\to 0} u(x)=\infty.
\]
 By the assumption and the Harnack inequality in Lemma \ref{lem:harnack},  we can find $r_i\to 0$ such that
$r_i^{\frac{n-2\sigma}{2}}\bar u(r_i) \to 0$ and $r_i$ is a local minimum point of $r^{\frac{n-2\sigma}{2}}\bar u(r)$, where $\bar u(r)$ is the spherical average of $u$ over $\pa B_r$. Let
\[
\varphi_i(y)=\frac{u(r_i y)}{u(r_i e_n)}.
\]
By \eqref{eq:main},
\[
\varphi_i(y)=\int_{B_{2/r_i} } \frac{(r_i^{\frac{n-2\sigma}{2}} u(r_ie_n) )^{\frac{4\sigma}{n-2\sigma}} \varphi_i(z)^{\frac{n+2\sigma}{n-2\sigma}}}{|y-z|^{n-2\sigma}}\,\ud z +h_i(y), \quad y\in B_{2/r_i}\setminus \{0\},
\]
where $h_i(y)=u(r_i e_n)^{-1} h(r_i y)$.

We claim that
\be \label{eq:converg2}
 \varphi_i (y) \to \frac{1}{2|y|^{n-2\sigma}} + \frac12 \quad \mbox{in }C^2_{loc}(\R^n \setminus \{0\}).
\ee

First, since $u(r_i e_n)\to\infty$, we have that $h_i (y) \to 0$ in $C^n_{loc}(\R^n)$.

Second, using the Harnack inequality in Lemma \ref{lem:harnack}, we know  that $r_i^{\frac{n-2\sigma}{2}} u(r_ie_n)\to 0$ and $ \varphi_i$ is locally uniformly bounded in $B_{2/r_i}\setminus\{0\}$. Hence, $(r_i^{\frac{n-2\sigma}{2}} u(r_ie_n) )^{\frac{4\sigma}{n-2\sigma}} \varphi_i(y)^{\frac{n+2\sigma}{n-2\sigma}} \to 0$ in $C_{loc}^n(\R^n\setminus \{0\})$. Thus, we have, for any $t>1$,  $0<|y|<t$, $0<\va<|y|/100$, after passing to subsequences, 
\begin{align*}
&\lim_{i\to\infty}\int_{B_t}\frac{(r_i^{\frac{n-2\sigma}{2}} u(r_ie_n) )^{\frac{4\sigma}{n-2\sigma}} \varphi_i(z)^{\frac{n+2\sigma}{n-2\sigma}}}{|y-z|^{n-2\sigma}}\,\ud z\\
&\quad=  
\lim_{i\to\infty}\int_{B_\va}\frac{(r_i^{\frac{n-2\sigma}{2}} u(r_ie_n) )^{\frac{4\sigma}{n-2\sigma}} \varphi_i(z)^{\frac{n+2\sigma}{n-2\sigma}}}{|y-z|^{n-2\sigma}}\,\ud z\\
&\quad=|y|^{2\sigma-n}(1+O(\va))\lim_{i\to\infty}\int_{B_\va}(r_i^{\frac{n-2\sigma}{2}} u(r_ie_n) )^{\frac{4\sigma}{n-2\sigma}} \varphi_i(z)^{\frac{n+2\sigma}{n-2\sigma}}.
\end{align*}
By sending $\va\to 0$, we have
\[
\lim_{i\to\infty}\int_{B_t}\frac{(r_i^{\frac{n-2\sigma}{2}} u(r_ie_n) )^{\frac{4\sigma}{n-2\sigma}} \varphi_i(z)^{\frac{n+2\sigma}{n-2\sigma}}}{|y-z|^{n-2\sigma}}\,\ud z= \frac{a}{|y|^{n-2\sigma}},
\]
for some constant $a\ge 0$. 

Also,
\[
\int_{B_{2/r_i} \setminus B_t}
\frac{(r_i^{\frac{n-2\sigma}{2}} u(r_ie_n) )^{\frac{4\sigma}{n-2\sigma}} \varphi_i(z)^{\frac{n+2\sigma}{n-2\sigma}}}{|y-z|^{n-2\sigma}}\,\ud z  \to f(y)\ge 0\quad \mbox{in }C_{loc}^n(B_t)
\]
for some function $f\in C^2(B_t)$, since the left-hand side is locally uniformly bounded in $C_{loc}^{n+1}(B_t)$. Moreover, for any fixed large $R>0$ and $y\in B_t$, we have
\[
\int_{t \le |y|\le R}
\frac{(r_i^{\frac{n-2\sigma}{2}} u(r_ie_n) )^{\frac{4\sigma}{n-2\sigma}} \varphi_i(z)^{\frac{n+2\sigma}{n-2\sigma}}}{|y-z|^{n-2\sigma}}\,\ud z  \to 0\quad\mbox{as } i\to \infty,
\]
and for any $y', y''\in B_t$, we have
\[
\begin{split}
\int_{B_{2/r_i}  \setminus B_R}
&\frac{(r_i^{\frac{n-2\sigma}{2}} u(r_ie_n) )^{\frac{4\sigma}{n-2\sigma}} \varphi_i(z)^{\frac{n+2\sigma}{n-2\sigma}}}{|y'-z|^{n-2\sigma}}\,\ud z \\
&\le  \left( \frac{R+t}{R-t} \right)^{n-2\sigma}\int_{B_{2/r_i}  \setminus B_R}
\frac{(r_i^{\frac{n-2\sigma}{2}} u(r_ie_n) )^{\frac{4\sigma}{n-2\sigma}} \varphi_i(z)^{\frac{n+2\sigma}{n-2\sigma}}}{|y''-z|^{n-2\sigma}}\,\ud z  .
\end{split}
\]
Therefore, it follows that
\[
f(y') \le \left( \frac{R+t}{R-t} \right)^{n-2\sigma} f(y'').
\]
By sending $R\to \infty$ and exchanging the roles of $y'$ and $y''$, we have $f(y'') = f(y')$. Thus,
\[
f(y)\equiv f(0)\quad \mbox{for all } y\in B_t.
\]
Since $ \varphi_i$ is locally uniformly bounded in $B_{2/r_i}\setminus\{0\}$, we know that it is locally uniformly bounded in $C^{n+1}(B_{2/r_i}\setminus\{0\})$. Hence, subject to a subsequence, $\varphi_i\to\varphi$ in $C^n_{loc}(\R^n\setminus\{0\})$ for some $\varphi$ as $i\to\infty$. From the above arguments, we conclude that
\[
\varphi_i(y) \to \frac{a}{|y|^{n-2\sigma}}+f(0) \quad \mbox{in }C^n_{loc}(\R^n\setminus \{0\}). 
\]
 Since $\varphi_i(e_n)=1$ and
\[
\frac{\mathrm{d}}{\mathrm{d}r}\left\{ r^{\frac{n-2\sigma}{2}}\bar \varphi_i(r)\right\}\Big|_{r=1}=
r_i^{-\frac{n-2\sigma}{2}+1}u_i(r_i e_n)^{-1}\frac{\mathrm{d}}{\mathrm{d}r}\left\{ r^{\frac{n-2\sigma}{2}}\bar u(r)\right\} \Big|_{r=r_i}=0,
\]
we have, by  sending $i$ to $\infty$, that
\[a=f(0)= \frac{1}{2} .\]
Therefore, the claim of the limit \eqref{eq:converg2} is the proved.

Let
\begin{equation}\label{eq:definitionofv}
v(x)= \int_{B_2} \frac{u(y)^{\frac{n+2\sigma}{n-2\sigma}}}{|x-y|^{n-2\sigma}}\,\ud y +h(x)\quad\mbox{in } \R^n\setminus \{0\}.
\end{equation}
Then $v=u$ in $B_2\setminus\{0\}$,  and
\[
v(x)= \int_{B_2} \frac{v(y)^{\frac{n+2\sigma}{n-2\sigma}}}{|x-y|^{n-2\sigma}}\,\ud y +h(x)\quad\mbox{in } \R^n\setminus\{0\}.
\]
Consequently,
\begin{equation}\label{eq:algebraequation}
(-\Delta)^\sigma v(y)=c(n,\sigma) v(y)^{\frac{n+2\sigma}{n-2\sigma}}\quad\mbox{in }B_2\setminus\{0\}.
\end{equation}
for some constant $c(n,\sigma)>0$, where we used the assumption that $(-\Delta)^\sigma h=0$ in $B_2$.

 We are going derived a contradiction by a Pohozaev identity of \eqref{eq:algebraequation}, in which we need to use the algebraic structure of the differential equation \eqref{eq:algebraequation}. The needed Pohozaev identities are provided in the Appendix \ref{appendix:pohozaev}. 

We first consider the case that $\sigma$ is an integer. Let $P(u,r)$ be as in \eqref{eq:pohozaevinteger}. From Proposition \ref{prop:pohozaev}, we know that $P(u,r)$ is a constant independent of $r$. Since $|D^k \varphi_i|\le C$ near $\partial B_1$ and $r_i^{\frac{n-2\sigma}{2}}u(r_ie_n)=o(1)$, we have
\[
|D^k u(x)|\le C r_i^{-k} u(r_ie_n)=o(1)r_i^{-\frac{n-2\sigma}{2}-k}\quad\mbox{for all }|x|=r_i, k=1,2,\cdots,n.
\]
It follows that
\[
\lim_{i\to\infty}P(u,r_i)=0.
\]
Hence,
\[
P(u,r_i)=0\quad\mbox{for all } i.
\]
This implies that
\[
\int_{\partial B_1}g_\sigma(\varphi_i,\varphi_i)+ \frac{n-2\sigma}{n} c(n,\sigma)(r_i^{\frac{n-2\sigma}{2}} u(r_ie_n) )^{\frac{4\sigma}{n-2\sigma}}\int_{\partial B_1} \varphi_i^{\frac{2n}{n-2\sigma}}=0,
\]
where $g_\sigma$ is given in \eqref{eq:gpohozaev}. Sending $i\to\infty$ and multiplying $4$ on both sides, we obtain
\[
\int_{\partial B_1}g_\sigma\left(\frac{1}{|x|^{n-2\sigma}}+1,\frac{1}{|x|^{n-2\sigma}}+1\right)=0.
\]
Since 
\begin{equation}\label{eq:bubblelambda}
\bar u_\lda(x)=\left(\frac{\lda}{|x|^2+\lda^2}\right)^{\frac{n-2\sigma}{2}}
\end{equation}
satisfies
\[
(-\Delta)^\sigma \bar u_\lda=\tilde c(n,\sigma) \bar u_\lda^{\frac{n+2\sigma}{n-2\sigma}}\quad\mbox{in }\R^n
\]
for any $\lda>0$, we have
\[
P(\bar u_\lda,1)=\lim_{r\to\infty} P(\bar u_\lda,r)=0\quad\forall\ \lda>0.
\]
It follows that
\[
\int_{\partial B_1}g_\sigma(\lda^{-\frac{n-2\sigma}{2}}\bar u_\lda,\lda^{-\frac{n-2\sigma}{2}}\bar u_\lda)+ \frac{n-2\sigma}{n} \tilde c(n,\sigma)\lda^{2\sigma-n}\int_{\partial B_1} \bar u_\lda^{\frac{2n}{n-2\sigma}}=0.
\]
Sending $\lda\to 0$, we obtain
\[
\int_{\partial B_1}g_\sigma\left(\frac{1}{|x|^{n-2\sigma}},\frac{1}{|x|^{n-2\sigma}}\right)=0.
\]
Then
\begin{align*}
0&=\int_{\partial B_1}g_\sigma\left(\frac{1}{|x|^{n-2\sigma}}+1,\frac{1}{|x|^{n-2\sigma}}+1\right)-\int_{\partial B_1}g_\sigma\left(\frac{1}{|x|^{n-2\sigma}},\frac{1}{|x|^{n-2\sigma}}\right)\\
&=2(-1)^\sigma\frac{n-2\sigma}{2} \int_{\pa B_1} \frac{\pa \Delta^{\sigma-1} (|x|^{2\sigma-n})}{\pa\nu}\\
&\neq 0,
\end{align*}
where we used \eqref{eq:pohozaevleft} in the second equality. 

This reaches a contradiction. 

If $\sigma$ is not an integer, the idea is the same and it only involves technical issues from the localization formula of the fractional Laplacian $(-\Delta)^\sigma$.  The technical calculation details are given in Appendix \ref{appendix:poisson} and Appendix \ref{appendix:technical}. 

We start from Proposition \ref{prop:pohozaev22} and Lemma \ref{lem:estimatepohozaev}. It follows from Proposition \ref{prop:pohozaev22} and Lemma \ref{lem:estimatepohozaev} that
\[
P(v,r_i)=0\quad\mbox{for all } i,
\]
where $P$ is the one defined in \eqref{eq:pohozaevnoninteger}. Rewrite the above equation for $\varphi_i$ and send $i\to\infty$, we will have
\[
0= \int_{\pa'' \B_1^+} \widetilde Q_m\left(\frac{1}{2|X|^{n-2\sigma}}+\frac 12\right),
\]
where $\int_{\pa'' \B_1^+} \widetilde Q_m(U)$ is given in \eqref{eq:tildeQ}. Let $\bar u_\lda$ be as in \eqref{eq:bubblelambda}, and $\overline U_\lda=\mathcal P_\sigma*\bar u_\lda$ be as defined in \eqref{eq:poissonextension}. Then we have $P(\bar u_\lda,1)=0$ $\forall~\lda>0$, and thus,
\[
\int_{\pa'' \B_1^+} \widetilde Q_m\left(\frac{1}{2|X|^{n-2\sigma}}\right)=0.
\]
But 
\begin{align*}
0&=\int_{\pa'' \B_1^+} \widetilde Q_m\left(\frac{1}{2|X|^{n-2\sigma}}+\frac 12\right)-\int_{\pa'' \B_1^+} \widetilde Q_m\left(\frac{1}{2|X|^{n-2\sigma}}\right)\\
&=\frac{n-2\sigma}{8} \int_{\pa'' \B_r^+} t^b\pa_\nu \Delta_b^m (|X|^{2\sigma-n})\neq 0,
\end{align*}
which is a contradiction.

Hence, $\limsup_{x\to 0} |x|^{\frac{n-2\sigma}{2}} u(x)=0$. Consequently, by Lemma \ref{lem:remove}, $u$ can be extended as a continuous function near the origin $0$.
\end{proof}


\begin{proof}[Proof of Theorem \ref{thm:A}]
It follows from Propositions \ref{prop:upbound}, \ref{prop:symmetry} and \ref{prop:zero}.
\end{proof}

\section{Existence of Fowler solutions}\label{sec:Fowler}

Suppose that $u(|x|)$ is a positive solution of
\[
u(x)= \int_{\R^n} \frac{u(y)^{\frac{n+2\sigma}{n-2\sigma }}}{|x-y|^{n-2\sigma}}\,\ud y \quad \mbox{in }\R^n\setminus \{0\}.
\]
Let $U(t)=r^{\frac{n-2\sigma}{2}} u(r)$, where $r=|x|$, $\theta =\frac{x}{|x|}$ and $t=\ln r$. By changing variables $s=\ln\rho$, we have
\begin{align*}
U(t)&=e^{\frac{n-2\sigma}{2}t} \int_{0}^\infty \int_{\mathbb{S}^{n-1}}\frac{u(\rho)^{\frac{n+2\sigma}{n-2\sigma}}}{|e^{t} e_1 -\rho \zeta|^{n-2\sigma}} \rho^{n-1}\,\ud \zeta\,\ud \rho\\&
=\int_{-\infty}^\infty  \int_{\mathbb{S}^{n-1}}\frac{U(s)^{\frac{n+2\sigma}{n-2\sigma}}}{|e^{\frac{t-s}{2}} e_1 -e^{\frac{s-t}{2}} \zeta|^{n-2\sigma}} \,\ud \zeta\,\ud s\\
&=\int_{-\infty}^{\infty}K(t-s) U(s)^{\frac{n+2\sigma}{n-2\sigma}}\,\ud s,
\end{align*}
where $e_1=(1,0,\cdots,0)$ and
\begin{align*}
K(t)&= \int_{\mathbb{S}^{n-1}}\frac{1}{|e^{\frac{t}{2}} e_n -e^{\frac{-t}{2}} \zeta|^{n-2\sigma}} \,\ud \zeta\\&
=\int_{\mathbb{S}^{n-1}}\frac{1}{|e^{t}+e^{-t} -2 \zeta_1|^\frac{n-2\sigma}{2}} \,\ud \zeta.
\end{align*}
Hence,
\begin{equation}\label{eq:KKK}
K(t)=
\begin{cases}
\displaystyle \frac{1}{2^{\frac{1-2\sigma}{2}}}\left(\frac{1}{|\cosh (t)-1|^{\frac{1-2\sigma}{2}}}+\frac{1}{|\cosh (t)+1|^{\frac{1-2\sigma}{2}}}\right)\quad\mbox{if }n=1,\\
\displaystyle\frac{1}{2^{\frac{n-2\sigma}{2}}}  \w_{n-2} \int_{-1}^1 \frac{(1-\zeta_1^2)^{\frac{n-3}{2}}}{|\cosh (t)-\zeta_1|^{\frac{n-2\sigma}{2}}}\,\ud \zeta_1 \quad\mbox{if }n\ge 2,
\end{cases}
\end{equation}
where $\omega_0=2$. It is clear that $K$ is positive, even and $K(t)=O(e^{-\frac{n-2\sigma}{2}t})$ as $|t|\to \infty$. It is also elementary to check that (or see Lemma 2.6 in \cite{D+} with the $\gamma$ there replaced by $-\sigma$ and the proof is identical) 
\[
K(t)\sim |t|^{2\sigma-1}\ \mbox{ as } \ |t|\to 0, \mbox{ if } \sigma\in (0,1/2).
\]
If $\sigma=\frac 12$, then
\[
K(t)\sim |\ln|t||\ \mbox{ as } \ |t|\to 0.
\]
If $\sigma>\frac 12$, then $K$ is bounded and H\"older continuous on $\R$. 

Let $w(t)=U(t)^{\frac{n+2\sigma}{n-2\sigma}}$. We have
\begin{equation}\label{eq:periodicsolution}
w(t)^{\frac{n-2\sigma}{n+2\sigma}}= \int_{-\infty}^{\infty}K(t-s) w(s)\,\ud s.
\end{equation}
We will show that there exists a family of non-constant periodic solutions to \eqref{eq:periodicsolution}.

For any $T>0$, we consider the variational problem
\begin{equation}\label{eq:JT}
J_T= \sup_{f\in \mathcal{A}_T\setminus \{0\}} J_T[f],
\end{equation}
where
\be\label{eq:JTf}
J_T[f]= \frac{\int_0^T \int_0^T K_T(t-s) f(t)f(s)\,\ud t\ud s}{(\int_0^T |f(t)|^{\frac{2n}{n+2\sigma}}\ud t)^{\frac{n+2\sigma}{n}}},
\ee
\be\label{eq:KT}
K_T(t)= \sum_{j=-\infty}^\infty K(t+jT),
\ee
and
\[
\mathcal{A}_T= \{f: f\in L^{\frac{2n}{n+2\sigma}}_{loc}(\R), ~ f(t)=f(t+T)\quad  \mbox{for }t\in \R \}.
\]


If $\sigma>1/2$, then $K$ is bounded, and thus, by H\"older inequality,
\[
J_T[f]\le C(T) \frac{(\int_0^T |f(t)|\,\ud t)^2}{(\int_0^T |f(t)|^{\frac{2n}{n+2\sigma}}\ud t)^{\frac{n+2\sigma}{n}}}\le C(T)T^{\frac{n-2\sigma}{n}}.
\]
Therefore, $J_T<\infty$ for every $T>0$.

If $\sigma=\frac 12$, then $n\ge 2$, and $K(t)\le C(n) |t|^{-\frac 14}$. By the Hardy-Littlewood-Sobolev inequality \cite{Lieb83} and H\"older inequality that 
\[
J_T[f]\le C(T) \frac{(\int_0^T |f(t)|\,\ud t)^2+ (\int_0^T |f(t)|^{8/7}\,\ud t)^{7/4}}{(\int_0^T |f(t)|^{\frac{2n}{n+1}}\ud t)^{\frac{n+1}{n}}}\le C(T)(T^{\frac{n-1}{n}}+T^{\frac{3n-4}{4}}).
\]
Therefore, $J_T<\infty$ for every $T>0$.

If $\sigma<1/2$, then it follows from the Hardy-Littlewood-Sobolev inequality and H\"older inequality that $J_T<\infty$ for every $T>0$.

We will show that $J_T$ is always achieved by a smooth positive function. Conceptually, this is because if $n\ge 2$ then the exponent $\frac{n+2\sigma}{n-2\sigma}$ is subcritical in 1-D. When $n=1$, this is a critical problem, and we will not have the compact embedding. Nevertheless, the kernel $K$ in \eqref{eq:KKK} has a positive mass type property: $K(t)\ge |t|^{2\sigma-1}+A_0$ for some $A_0>0$ near the origin, and therefore, we still can use a variational approach to show the existence of maximizers as in the Yamabe problem.

\begin{prop} \label{prop:fowler} 
Suppose $n\ge 2$. For every $T>0$, $J_T$ is achieved by a smooth positive function. Moreover, there exists $T^*>0$ such that if  $T>T^*$, then the maximizers of $J_T$ are not constants. Consequently, there exists a family of non-constant periodic solutions to \eqref{eq:periodicsolution}.
\end{prop}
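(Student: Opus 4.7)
The plan is to apply the direct method of the calculus of variations to the functional $J_T$, leveraging the one-dimensional subcriticality of the problem when $n \ge 2$ to obtain compactness. Take a maximizing sequence $\{f_k\} \subset \mathcal{A}_T$ normalized so that $\|f_k\|_{L^{2n/(n+2\sigma)}(0,T)} = 1$. Since $K_T > 0$, replacing $f_k$ by $|f_k|$ preserves the denominator and only increases the numerator, so we may assume $f_k \ge 0$. Pass to a subsequence $f_k \rightharpoonup f$ weakly in $L^{2n/(n+2\sigma)}(0,T)$.

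The key step is the compactness of the convolution operator $Sg(t) := \int_0^T K_T(t-s) g(s)\,\ud s$ from $L^{2n/(n+2\sigma)}(0,T)$ into $L^{2n/(n-2\sigma)}(0,T)$, viewed on the circle of length $T$. This follows because $K_T$ is the periodization of the Riesz-type kernel $K$, whose Fourier decay yields a $2\sigma$-order smoothing, so $S$ maps into a fractional Sobolev space $W^{2\sigma,p}$ on the torus, and the embedding into $L^{2n/(n-2\sigma)}$ is compact precisely because $n \ge 2$ makes the target exponent strictly subcritical in one dimension. With $S$ compact, $Sf_k \to Sf$ strongly in $L^{2n/(n-2\sigma)}$, so by duality the numerator of $J_T[f_k]$ converges to the numerator at $f$; combined with the weak lower semicontinuity of the denominator, $J_T[f] \ge J_T$, and $f \not\equiv 0$ since otherwise the numerator would vanish. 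By Lagrange multipliers, $f \ge 0$ satisfies $Sf = \mu\, f^{(n-2\sigma)/(n+2\sigma)}$ for some $\mu > 0$, and a constant rescaling $w = c f$ gives a solution of \eqref{eq:periodicsolution}. Smoothness is then obtained by bootstrap: the kernel $K_T$ is smooth away from the origin with an integrable singularity there (of order $|t|^{2\sigma-1}$, logarithmic, or bounded according to the three ranges of $\sigma$ listed after \eqref{eq:KKK}), so iterative use of $w = (K_T * w)^{(n+2\sigma)/(n-2\sigma)}$ together with Hardy--Littlewood--Sobolev on the torus improves integrability until $w \in C^\infty$; strict positivity follows from $K_T > 0$ and $w \not\equiv 0$.

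For the non-constancy statement, a direct computation shows that for any positive constant $c$,
\[
J_T[c] = \frac{\int_{\R} K(u)\,\ud u}{T^{2\sigma/n}} \longrightarrow 0 \quad \text{as } T \to \infty.
\]
On the other hand, fix a non-zero, non-negative smooth bump $\phi$ supported in $[0,1]$ and, for $T \ge 2$, view it as a $T$-periodic element of $\mathcal{A}_T$. The denominator $(\int \phi^{2n/(n+2\sigma)})^{(n+2\sigma)/n}$ is independent of $T$, and since $K \in L^1(\R)$, dominated convergence yields
\[
\int_0^T\!\!\int_0^T K_T(t-s)\phi(t)\phi(s)\,\ud t\,\ud s \;\longrightarrow\; \int_0^1\!\!\int_0^1 K(t-s)\phi(t)\phi(s)\,\ud t\,\ud s \;>\; 0
\]
as $T \to \infty$. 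Hence $\liminf_{T\to\infty} J_T \ge \liminf_{T\to\infty} J_T[\phi] > 0$, while $J_T[c] \to 0$, so there exists $T^*$ such that for every $T \ge T^*$ the maximizers of $J_T$ cannot be constant, which produces the desired family of non-constant periodic solutions of \eqref{eq:periodicsolution}. I expect the main obstacle to be the compactness of $S$, particularly in the regime $\sigma \le 1/2$ where $K$ is genuinely singular at the origin; making this rigorous requires combining the Hardy--Littlewood--Sobolev inequality (with its logarithmic refinement at $\sigma = 1/2$) and compact Sobolev embeddings on the one-dimensional torus, after which existence, smoothness, and non-constancy for large $T$ all follow by the standard arguments outlined above.
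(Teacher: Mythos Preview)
Your proposal follows essentially the same strategy as the paper: direct method on $J_T$, compactness of the convolution operator on the torus exploiting one-dimensional subcriticality for $n\ge 2$, Euler--Lagrange plus bootstrap for smoothness and positivity, and a comparison of $J_T$ at constants (decaying like $T^{-2\sigma/n}$) versus a fixed bump (bounded below) for non-constancy at large $T$. The non-constancy and smoothness arguments match the paper almost verbatim.

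The one substantive variation is the compactness step. You sketch a Fourier/Sobolev route: read off $2\sigma$-order smoothing from the Fourier decay of $K_T$, then invoke a compact Sobolev embedding on $\mathbb{T}$. This is plausible, but pinning down the Fourier decay of $K_T$ (which is not simply a periodized Riesz kernel) takes some work, and you correctly flag this as the main obstacle. The paper instead proves compactness of $S:L^{2n/(n+2\sigma)}\to L^{2n/(n-2\sigma)}$ directly via the Kolmogorov--Riesz--Fr\'echet theorem: for $n\ge 2$ one has $K_T\in L^{n/(n-2\sigma)}(0,T)$, so Young's inequality gives $\|\tau_h F_i-F_i\|_{L^{2n/(n-2\sigma)}}\le \|\tau_h K_T-K_T\|_{L^{n/(n-2\sigma)}}\to 0$ uniformly in $i$, while HLS plus H\"older yields a uniform $L^p$ bound for some $p>2n/(n-2\sigma)$ (this is exactly where $n\ge 2$ enters), handling tightness near the endpoints. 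This is more elementary and sidesteps any Fourier analysis; your closing remark about combining HLS with compact embeddings is in fact closer in spirit to what the paper actually does than your Fourier sketch.
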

\begin{proof} 
Let $\{f_i\}$ be a maximizing sequence such that 
\[
\|f_i\|_{L^{\frac{2n}{n+2\sigma}}([0,T])}=1\quad\mbox{and}\quad\int_0^T \int_0^T K_T(t-s) f_i(t)f_i(s)\,\ud t\ud s\to J_T\quad\mbox{as }i\to\infty.
\] 
Since $J_T[|f_i|]\ge J_T[f_i]$, we can assume that all $f_i$ are nonnegative. Subject to a subsequence, we assume that $f_i\rightharpoonup f$ weakly in $L^{\frac{2n}{n+2\sigma}}([0,T])$. 

We will show that $F_i(t):= \int_0^T K_T(t-s)f_i(s)\,\ud s$ converges strongly in $L^{\frac{2n}{n-2\sigma}}([0,T])$. 

First, since $n\ge 2$, it follows from the Hardy-Littlewood-Sobolev inequality and H\"older inequality that $\|F_i\|_{L^{p}([0,T])}\le C$ for some $p>\frac{2n}{n-2\sigma}$, where $C>0$ is independent of $i$. Consequently, by H\"older inequality, for $\va>0$, there exists $\delta>0$ such that 
\[
\|F_i\|_{L^{\frac{2n}{n-2\sigma}}([0,\delta]\cup[T-\delta,T])}<\va.
\]
Secondly, we extend $F_i$ to be zero outside of $(0,T)$ and let $\tau_h F_i(x)=F_i(x+h)$. Since $s,t\in [0,T]$, when defining $F_i$, one can also consider that $f_i=0$ in $\R\setminus [0,T]$ and $K_T(t)=0$ in $\R\setminus [-T,T]$. Suppose $h>0$ small. Given $\va>0$, by Young's inequality,
\begin{align*}
\|\tau_h F_i-F_i\|_{L^{\frac{2n}{n-2\sigma}}([0,T-h])}&= \|(\tau_h K_T-K_T)*f_i\|_{L^{\frac{2n}{n-2\sigma}}([0,T-h])}\\
&\le \|(\tau_h K_T-K_T)\|_{L^{\frac{n}{n-2\sigma}}([-T,T])}\|f_i\|_{L^{\frac{2n}{n+2\sigma}}([0,T])}.
\end{align*}
Since $K_T\in L^{\frac{n}{n-2\sigma}}([-T,T])$  ($n\ge 2$), there exists $\tilde\delta>0$ such that if $h\le\tilde\delta$ then $\|(\tau_h K_T-K_T)\|_{L^{\frac{n}{n-2\sigma}}([-T,T])}\le \va$. This implies that for $\va>0$, there exists $\tilde\delta>0$ small such that if $0\le h<\tilde\delta$, then 
\begin{align*}
\|\tau_h F_i-F_i\|_{L^{\frac{2n}{n-2\sigma}}([0,T])}=\|\tau_h F_i-F_i\|_{L^{\frac{2n}{n-2\sigma}}([0,T-h])}+\|F_i\|_{L^{\frac{2n}{n-2\sigma}}([T-h,T])}<2\va.
\end{align*}
The same would hold for $-\delta< h<0$ by similar arguments.

By the Kolmogorov-M. Riesz-Fr\'echet theorem (see Theorem 4.26 in the book \cite{Brezis} of Brezis), the above implies that there is a subsequence of $\{F_i\}$, which is still denoted as $\{F_i\}$, such that $\{F_i\}$ converges strongly in $L^{\frac{2n}{n-2\sigma}}([0,T])$ to some $F$.
Hence,
\begin{align*}
\int_0^T \int_0^T K_T(t-s) f_i(t)f_i(s)\,\ud t\ud s&= \int_0^T \left(\int_0^T K_T(t-s) f_i(s)\,\ud s-F(t)\right)(f_i(t)-f(t))\,\ud t\\
&\quad + \int_0^T F(t)(f_i(t)-f(t))\,\ud t\\
&\quad+ \int_0^T \int_0^T K_T(t-s) f_i(s)f(t)\,\ud t\ud s\\
&\to \int_0^T \int_0^T K_T(t-s) f(t)f(s)\,\ud t\ud s\ \mbox{strongly as }i\to\infty.
\end{align*}
Then $f\not\equiv 0$. Since $\|f\|_{L^{\frac{2n}{n+2\sigma}}([0,T])}\le\liminf_{i\to\infty}\|f_i\|_{L^{\frac{2n}{n+2\sigma}}([0,T])}=1$, we have
\[
J_T[f]=J_T.
\]
Also, $f$ satisfies the equation
\[
c_0f(t)^{\frac{n-2\sigma}{n+2\sigma}}= \int_{0}^{T}K_T(t-s) f(s)\,\ud s
\]
for some positive constant $c_0$.  Since the kernel $K_T$ is positive, it is clear from the above equation that $f>0$ on $[0,T]$.  

For convenience, we substitute $f(t)^{\frac{n-2\sigma}{n+2\sigma}}$ by $g(t)$, and obtain
\[
c_0g(t)= \int_{0}^{T}K_T(t-s) g(s)^{\frac{n+2\sigma}{n-2\sigma}}\,\ud s.
\]

If $\sigma>\frac 12$, then $K_T$ is H\"older continuous on $[0,T]$. Since $g$ and $K_T$ are periodic, using bootstrap arguments it follows that $g$ is smooth.

Let $\eta$ be an even smooth cut-off function so that $\eta=1$ in $[-2,2]$ and is zero in $\R\setminus[-3,3]$.

 If $\sigma<\frac 12$, then
\begin{align*}
c_0g(t)&= \int_{-\infty}^{+\infty}K(t-s) g(s)^{\frac{n+2\sigma}{n-2\sigma}}\,\ud s\\
&= \int_{-3}^{3}K(t-s) \eta(s)g(s)^{\frac{n+2\sigma}{n-2\sigma}}\,\ud s+\int_{|s|\ge 2}K(t-s) (1-\eta(s))g(s)^{\frac{n+2\sigma}{n-2\sigma}}\,\ud s\\
&=:h_1(t)+h_2(t).
\end{align*}
Since $g\in L^{\frac{2n}{n-2\sigma}}(0,1)$, we may suppose that $g(t_0)<\infty$ for some $t_0\in (0,1)$. Since $K(t-s)\le C K(t_0-s)$ for all $t\in (-1,1)$ and all $|s|\ge 3$, where $C$ is independent of $t$ and $s$, we know that $h_2\in L^\infty(-1,1)$.
Since $g^{\frac{4\sigma}{n-2\sigma}}\in L^{\frac{1}{2\sigma}}_{loc}(\R)$ and $K(t)\sim |t|^{2\sigma-1}$ as $|t|\to 0$, it follows from the regularity result Theorem 1.3 in \cite{Li04} that $g\in L^\infty_{loc}(\R)$. 
Then one can use direction calculations (see, e.g., the proof of Proposition 2.5 in \cite{JLX}) to verify that $g$ is H\"older continuous. Finally, $h_2$ is smooth in $[-1,1]$, and one can use bootstrap arguments for $h_1$ (via difference quotients) to obtain that $g$ is smooth.

If $\sigma=\frac 12$ and $n\ge 2$, then $K(t)\le C|t|^{\frac 14}$, and therefore, it follows from the same reason that $g\in L^\infty_{loc}(\R)$, and therefore, is smooth.

Now, we will show that constant functions are not maximizers of $J_T$ if $T$ is sufficiently large. Note that for any nonzero constant $C$, $$J_T[C]=J_T[1]=\frac{\int_0^T \int_0^T K_T(t-s)\,\ud t\ud s}{T^{\frac{n+2\sigma}{n}}}.$$ 
For every $s\in [0,T]$,
\begin{align*}
\int_0^T K_T(t-s)\,\ud t =\int_{\R}K(t)\,\ud t<C.
\end{align*}
Hence
\[
J_T[1]\le CT^{-\frac{2\sigma}{n}}.
\]

On the other hand, let $\eta$ be a nonnegative smooth cut-off function such that $\eta\equiv 1$ on $[-1,1]$ and is identically zero outside of $(-2,2)$. Then
\[
J_T[\eta]\ge 2^{-\frac{n+2\sigma}{n}}\int_0^1\int_0^1K(t-s)\,\ud t\ud s>CT^{-\frac{2\sigma}{n}}\ge J_T[1]
\]
if $T\ge T^*$ for some sufficiently large $T^*$. Therefore, when $T\ge T^*$,  constant functions are not maximizers.

We complete the proof.
\end{proof}

Now let us deal with the case $n=1$.  We use subcritical approximations. Let $p>\frac{1-2\sigma}{1+2\sigma}$, and consider the subcritical problem: 
\[
J_{T,p}= \sup_{f\in \mathcal{A}_{T,p}\setminus \{0\}} J_{T,p}[f],
\]
where
\[
J_{T,p}[f]= \frac{\int_0^T \int_0^T K_T(t-s) f(t)f(s)\,\ud t\ud s}{(\int_0^T |f(t)|^{p+1}\ud t)^{\frac{2}{p+1}}},
\]
$K_T$ is as in \eqref{eq:KT}, and
\[
\mathcal{A}_{T,p}= \{f: f\in L^{p+1}_{loc}(\R), ~ f(t)=f(t+T)\quad  \mbox{for }t\in \R \}.
\]
The same proof of Proposition \ref{prop:fowler} leads to
\begin{prop} \label{prop:fowler2} 
Suppose $n=1$ and $p>\frac{1-2\sigma}{1+2\sigma}$. Then for every $T>0$, $J_{T,p}$ is achieved by a smooth positive function. 
\end{prop}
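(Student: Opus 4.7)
The plan is to follow the proof of Proposition~\ref{prop:fowler} essentially verbatim, with the hypothesis $p > (1-2\sigma)/(1+2\sigma)$ serving as the $n=1$ analog of subcriticality. First I would take a maximizing sequence $\{f_i\} \subset \mathcal{A}_{T,p}$ with $\|f_i\|_{L^{p+1}([0,T])} = 1$, replace each by $|f_i|$ so that $f_i \ge 0$, and extract a subsequence with $f_i \rightharpoonup f$ weakly in $L^{p+1}([0,T])$. As in the $n \ge 2$ case the heart of the matter is to upgrade this weak convergence to strong convergence of $F_i(t) := \int_0^T K_T(t-s) f_i(s)\,\ud s$ in $L^{(p+1)/p}([0,T])$; combined with the weak convergence of $f_i$, this passes $J_{T,p}[f_i] = \int_0^T F_i f_i\,\ud t$ to the limit, and lower semicontinuity of the $L^{p+1}$ norm together with the scale invariance of $J_{T,p}$ then forces $\|f\|_{L^{p+1}} = 1$ and $J_{T,p}[f] = J_{T,p}$.

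For the strong convergence I would invoke the Kolmogorov-M.~Riesz-Fr\'echet theorem. Since $n=1$ forces $\sigma \in (0, 1/2)$ and $K(t) \sim |t|^{2\sigma-1}$ near zero, the only singularity of $K_T$ on $[-T,T]$ is at the origin, so $K_T \in L^s([-T,T])$ for every $s < 1/(1-2\sigma)$. The hypothesis is precisely the statement that $(p+1)/(2p) < 1/(1-2\sigma)$, so one can fix $s \in \bigl((p+1)/(2p),\,1/(1-2\sigma)\bigr)$. Young's inequality $\|F_i\|_{L^r} \le \|K_T\|_{L^s}\|f_i\|_{L^{p+1}}$ with $1/r + 1 = 1/s + 1/(p+1)$ then yields uniform boundedness of $F_i$ in some $L^r$ with $r > (p+1)/p$, while $\|\tau_h F_i - F_i\|_{L^{(p+1)/p}} \le \|\tau_h K_T - K_T\|_{L^{(p+1)/(2p)}}\|f_i\|_{L^{p+1}} \to 0$ uniformly in $i$ as $h \to 0$ by translation continuity in $L^{(p+1)/(2p)}$.

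Once $f \not\equiv 0$ is a maximizer, the Euler-Lagrange equation reads $c_0 f^p = \int_0^T K_T(\cdot - s) f(s)\,\ud s$ for some $c_0 > 0$, and positivity of $K_T$ forces $f > 0$ on $\mathbb{R}/T\mathbb{Z}$. For smoothness I would write $f = (c_0^{-1} K_T * f)^{1/p}$ and iterate the Hardy-Littlewood-Sobolev estimate: starting from $f \in L^{p+1}$, each iteration gains integrability coming from the $2\sigma$-order of the kernel modulated by the factor $1/p$, and the condition $p > (1-2\sigma)/(1+2\sigma)$ is exactly what guarantees that this iteration strictly improves integrability at every stage, eventually giving $f \in L^\infty$; after that, the regularity results invoked in the proof of Proposition~\ref{prop:fowler} (Theorem~1.3 in \cite{Li04} together with the H\"older-to-smooth bootstrap from \cite{JLX}) deliver $f \in C^\infty$.

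The main obstacle is the strong convergence step, but because the problem is genuinely subcritical it reduces to a clean application of Young's inequality in the open range afforded by the hypothesis; no positive-mass refinement as in the critical case is required, and correspondingly the present statement does not claim any non-constancy of maximizers at large $T$.
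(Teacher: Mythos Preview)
Your proposal is correct and follows the same approach as the paper, which simply asserts that the proof of Proposition~\ref{prop:fowler} carries over verbatim. You have correctly identified that the subcriticality hypothesis $p>(1-2\sigma)/(1+2\sigma)$ is exactly equivalent to $(p+1)/(2p)<1/(1-2\sigma)$, which places $K_T$ in the right Lebesgue space for both the uniform higher integrability of $F_i$ and the translation-equicontinuity estimate via Young's inequality; this is the precise one-dimensional substitute for the step in Proposition~\ref{prop:fowler} where the paper invokes $K_T\in L^{n/(n-2\sigma)}$ for $n\ge 2$. Your bootstrap to $L^\infty$ via the iteration $1/q\mapsto (1/q-2\sigma)/p$ is also sound and indeed terminates under the stated hypothesis, whereas the paper would presumably cite Theorem~1.3 of \cite{Li04} directly as it does in the $\sigma<1/2$ branch of Proposition~\ref{prop:fowler}; these are equivalent routes to the same conclusion.
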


We will deal the critical case in 1-D, and show that Proposition \ref{prop:fowler} also holds for $n=1$.

\begin{prop} \label{prop:fowler3} 
Suppose $n=1$ and $\sigma\in (0,1/2)$. Then for every $T>0$, the $J_{T}$ defined in \eqref{eq:JT} is achieved by a smooth positive function. Moreover, there exists $T^*>0$ such that if  $T>T^*$, then the maximizers of $J_{T}$ are not constants. Consequently, there exists a family of non-constant periodic solutions to \eqref{eq:periodicsolution}.
\end{prop}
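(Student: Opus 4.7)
The plan is to adapt the Aubin--Schoen ``positive mass'' strategy from the Yamabe problem. When $n=1$ and $\sigma\in(0,1/2)$ we have $K(t)\sim|t|^{2\sigma-1}$ as $t\to 0$, so $J_T$ is modeled on the critical one-dimensional Hardy--Littlewood--Sobolev (HLS) quadratic form and a maximizing sequence may concentrate as a bubble; in particular the strong-compactness argument in the proof of Proposition \ref{prop:fowler} fails because $K_T\notin L^{1/(1-2\sigma)}([-T,T])$. The plan has three steps: (i) isolate the singular part of $K_T$ and extract a positive ``mass''; (ii) use this mass to prove a strict inequality $J_T>S_\infty$, where $S_\infty$ is the sharp 1D HLS constant on the line; and (iii) run concentration--compactness on the bilinear form to rule out concentration and obtain a maximizer.

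For (i), a Taylor expansion of the explicit formula \eqref{eq:KKK} at $t=0$ yields
\[
K(t)=|t|^{2\sigma-1}+2^{2\sigma-1}+O\bigl(|t|^{\min(2,\,2\sigma+1)}\bigr),
\]
and the tails $\sum_{j\ne0}K(t+jT)$ are smooth and positive, so on $[-T,T]$ one has the decomposition $K_T(t)=|t|^{2\sigma-1}+m_T(t)$ with $m_T\in C^0([-T,T])$ and positive mass
\[
m_T(0)=2^{2\sigma-1}+\sum_{j\ne0}K(jT)>0.
\]
For (ii), let $V$ attain $S_\infty$ in $\iint_{\R^2}|t-s|^{2\sigma-1}u(t)u(s)\,\ud t\,\ud s\le S_\infty\|u\|_{L^{2/(1+2\sigma)}(\R)}^2$, set $V_\lda(t)=\lda^{(1+2\sigma)/2}V(\lda(t-T/2))$, and test $J_T$ on the truncated bubble $\varphi_\lda=\eta V_\lda$, where $\eta$ is a fixed smooth cut-off equal to $1$ near $T/2$ and supported in $(0,T)$. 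Scale invariance of the singular bilinear form makes the $|t-s|^{2\sigma-1}$ piece contribute $S_\infty\|V\|_{L^{2/(1+2\sigma)}(\R)}^2(1+o(1))$ to the numerator, while the $m_T$ piece contributes $m_T(0)\bigl(\int\varphi_\lda\bigr)^2(1+o(1))\sim m_T(0)\bigl(\int V\bigr)^2\lda^{-(1-2\sigma)}$; the denominator tends to $\|V\|_{L^{2/(1+2\sigma)}(\R)}^2$. Hence
\[
J_T\ge J_T[\varphi_\lda]=S_\infty+\frac{m_T(0)\bigl(\int V\bigr)^2}{\|V\|_{L^{2/(1+2\sigma)}(\R)}^2}\,\lda^{-(1-2\sigma)}+o\bigl(\lda^{-(1-2\sigma)}\bigr)>S_\infty
\]
for $\lda$ sufficiently large.

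For (iii), take a maximizing sequence $\{f_i\}\subset\mathcal{A}_T$ with $f_i\ge 0$ and $\|f_i\|_{L^q([0,T])}=1$, where $q=2/(1+2\sigma)$, and extract a weak $L^q$ limit $f$. Since $m_T\in C^0$, the ``regular'' bilinear form is continuous on weakly convergent sequences, and a Brezis--Lieb splitting (after passing to an a.e.-convergent subsequence via an approximate Euler--Lagrange equation), combined with the HLS inequality $\iint_{[0,T]^2}|t-s|^{2\sigma-1}gg\le S_\infty\|g\|_{L^q}^2$, yields
\[
\iint_{[0,T]^2}K_T f_if_i=\iint_{[0,T]^2}K_T ff+\iint_{[0,T]^2}|t-s|^{2\sigma-1}g_ig_i+o(1),\quad\|f_i\|_{L^q}^q=\|f\|_{L^q}^q+\|g_i\|_{L^q}^q+o(1),
\]
where $g_i=f_i-f$. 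Setting $a=\|f\|_{L^q}^q\in[0,1]$ and passing to the limit gives
\[
J_T\le J_T\,a^{2/q}+S_\infty(1-a)^{2/q}.
\]
Since $2/q=1+2\sigma>1$, we have $a^{2/q}+(1-a)^{2/q}\le 1$ with equality only at $a\in\{0,1\}$, and the strict inequality $J_T>S_\infty$ from step (ii) forces $a=1$; hence $f$ is a maximizer of $J_T$. Smoothness and positivity of $f$ then follow from the regularity bootstrap in the proof of Proposition \ref{prop:fowler}, and the same test-function comparison $J_T[\eta]>J_T[1]$ used there shows that constants are not maximizers once $T\ge T^*$, producing a family of smooth non-constant periodic solutions of \eqref{eq:periodicsolution}.

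The main obstacle will be rigorously justifying the concentration--compactness splitting at the critical exponent: since $K_T$ sits just at the edge of integrability, one has to separate the singular and regular pieces carefully and profile any concentrating bubble in $f_i-f$ against an exact Euclidean HLS extremal in order to recover the bound $S_\infty$. The asymptotic expansion in step (ii) also requires some care, as $\varphi_\lda$ is only a truncated bubble and the $\lda^{-(1-2\sigma)}$-order correction has to be extracted cleanly against the leading $S_\infty$-term.
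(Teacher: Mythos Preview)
Your strategy is correct and your step (ii) --- testing $J_T$ against a concentrating HLS bubble and using the positive zeroth-order term in the expansion of $K$ --- is essentially identical to the paper's computation, which writes $K(t)\ge |t|^{2\sigma-1}+4^{2\sigma-1}$ for small $|t|$ and tests with $u_\lda(t)=\bigl(\lda/(\lda^2+|t-T/2|^2)\bigr)^{(1+2\sigma)/2}$ to obtain $J_T>S(\sigma)$.

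Where you diverge is in step (iii). The paper does \emph{not} run concentration--compactness on a maximizing sequence at the critical exponent. Instead it uses a subcritical approximation: for each $p_i>\frac{1-2\sigma}{1+2\sigma}$ the functional $J_{T,p_i}$ has a smooth positive maximizer $f_i$ by the compact embedding (Proposition~\ref{prop:fowler2}), and one shows $\liminf J_{T,p_i}\ge J_T$. The key step is a blow-up argument on the Euler--Lagrange equations: if $\max_{[0,T]}g_i\to\infty$ with $g_i=f_i^{p_i}$, one rescales around the maximum, passes to a limit solution of the critical equation on $\R$, and deduces $J_T\le S(\sigma)$, contradicting step (ii). Uniform $L^\infty$ bounds then give $C^k$ convergence of $g_i$ to a smooth maximizer of $J_T$.

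The trade-off: your direct approach is conceptually cleaner but leaves real work in extracting a.e.\ convergence of a maximizing sequence (needed for Brezis--Lieb) and in controlling the remainder bilinear form --- note that ``$m_T\in C^0$'' alone does not make $\iint m_T\,g_ig_i\to 0$; you need compactness of $g\mapsto m_T*g$ from $L^q$ to $L^{q'}$, which follows from $m_T\in L^r$ for large $r$ via Kolmogorov--Riesz. The paper's subcritical route sidesteps both issues, since each $f_i$ already solves an equation and the uniform bound gives smoothness of the limit for free. Both arguments ultimately rest on the same strict inequality $J_T>S(\sigma)$.
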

\begin{proof}
Let $p_i>\frac{1-2\sigma}{1+2\sigma}$ be such that $p_i\to \frac{1-2\sigma}{1+2\sigma}$ as $i\to\infty$. For brevity, we denote $J_{T,p_i}$ as $J_{T,i}$, and $J_{T,p_i}[f]$ as $J_{T,i}[f]$. 

We first have that
\be\label{eq:liminfbigger}
\liminf_{p_i\to\frac{1-2\sigma}{1+2\sigma}}J_{T,i}\ge J_{T}.
\ee
Indeed, for any $\va>0$, we can choose $0\le v\in L^\infty(0,T)$ such that 
\[
\|v\|_{L^{\frac{2}{1-2\sigma}}(0,T)}=1\quad\mbox{and}\quad\int_0^T \int_0^T K_T(t-s) v(t)v(s)\,\ud t\ud s>J_T-\va.
\]
Let $v_p=v/\|v\|_{L^{p+1}(0,T)}$. Then 
\[
\liminf_{p_i\to\frac{1-2\sigma}{1+2\sigma}}J_{T,i}\ge \liminf_{p_i\to\frac{1-2\sigma}{1+2\sigma}}J_{T,i}[v_{p_i}]\ge\liminf_{p_i\to\frac{1-2\sigma}{1+2\sigma}}\|v\|_{L^{p_i+1}(0,T)}^{-2}(J_T-\va)=J_T-\va.
\]
This proves \eqref{eq:liminfbigger} by sending $\va\to 0$. Therefore, one can assume that $J_{T,i}\to\Lda\ge J_{T}$.

Let $f_i$ be a maximizer of $J_{T,i}$ obtained in Proposition \ref{prop:fowler2}, which satisfies $\|f_i\|_{L^{p_i+1}(0,T)}=1$ and 
\[
J_{T,i}f_i(t)^{p_i}= \int_{0}^{T}K_T(t-s) f_i(s)\,\ud s=\int_{-\infty}^{\infty}K(t-s) f_i(s)\,\ud s.
\]
For convenience, we denote $q_i=\frac{1}{p_i}<\frac{1+2\sigma}{1-2\sigma}$ and $g_i(t)=f_i(t)^{p_i}$. Then
\[
J_{T,i}g_i(t)= \int_{-\infty}^{\infty}K(t-s) g_i(s)^{q_i}\,\ud s.
\]
We claim that $\{g_i\}$ is uniformly bounded on $[0,T]$. Then $\{g_i\}$ is uniformly bounded in $C^k$ norms, and will converge to a smooth function maximizing $J_T$. The proof of that constants are not maximizers of $J_T$ for large $T$ follows from the same proof as in that of Proposition \ref{prop:fowler}.

Our proof of the above claim is based on blow up analysis, which is similar to the proof of Proposition 2.11 in \cite{JLX}. Suppose not, that $m_i:=g_i(t_i)=\max_{[0,T]}f_i\to\infty$. Then subject to a subsequence, $t_i\to \bar t\in [0,T]$. Since the equation of $g_i$ is translation invariant, we may assume that $\bar t=0$. Let
\[
\tilde g_i(t)=m_i^{-1}g_i\left(m_i^{\frac{1-q_i}{2\sigma}}t+t_i\right).
\]
Then
\[
J_{T,i}\tilde g_i(t)= \int_{-\infty}^{\infty}K_i(t-s) \tilde g_i(s)^{q_i}\,\ud s.
\]
where
\begin{align*}
K_i(t)&= \frac{1}{|\widetilde m_i \sinh(\widetilde m_i^{-1}t )|^{1-2\sigma}}+\frac{1}{(\widetilde m_i \cosh(\widetilde m_i^{-1}t ))^{1-2\sigma}}\quad\mbox{with}\quad \widetilde m_i =2 m_i^{\frac{q_i-1}{2\sigma}}\\
& =: K_{i1}(t) +K_{i2}(t).
\end{align*}
Since $\tilde g_i$ is uniformly bounded by $1$, we have that $\|\tilde g_i\|_{C^\alpha(-R,R)}\le C(R)$, and thus, $\tilde g_i\to \tilde g$ in $C^{\alpha'}_{loc}(\R)$.  Since $\tilde g_i(0)=1$, we know that $\tilde g(0)=1$. Write
\[
J_{T,i}\tilde g_i(t)= \int_{-R}^{R}K_{i1}(t-s) \tilde g_i(s)^{q_i}\,\ud s +h_{i1}(R,t)+h_{i2}(t),
\]
where $R>0$ (large),
\[
h_{i1}(R,t)= \int_{|s|\ge R}K_{ij}(t-s) \tilde g_i(s)^{q_i}\,\ud s,\quad  h_{i2}(t)= \int_{\R}K_{i2}(t-s) \tilde g_i(s)^{q_i}\,\ud s,
\]
Since
\[
|K_{i1}'(t)|\le CK_{i1}(t)\quad\mbox{for all }t\ge 1,\quad\mbox{and}\quad |K_{i2}'(t)|\le Cm_i^{-1}K_{i2}(t)\quad\mbox{for all }t\in\R,
\]
we have that $|h_{i1}'(R,t)|\le C$  for $|t|\le R-1$, and $|h_{i2}'(t)|\le C\widetilde m_i^{-1}$ for all $t\in\R$. Thus, passing to a subsequence, $h_{i1}(R,t)\to h_1(R,t)$ and $h_{i2}'(t)\to c_2$ uniformly in $[-R+1,R-1]$, where $c_2$ is a nonnegative constant. Hence, sending $i\to\infty$, we obtain
\be\label{eq:blowupg}
\Lda \tilde g(t)= \int_{-R}^{R}\frac{\tilde g(s)^{\frac{1+2\sigma}{1-2\sigma}}}{|t-s|^{1-2\sigma}}\,\ud s +h_1(R,t)+c_2.
\ee
Arbitrarily fix $\va>0$. Choose $\delta>0$ small such that for $|t|< R/2$, if $R<|s|<\delta \widetilde m_i$, then 
\begin{align*}
\frac{1-\va}{1+\va} \frac{(R-|t|)^{2\sigma-1}}{R^{2\sigma-1}} 
\le\frac{1-\va}{1+\va} \frac{|s-t|^{2\sigma-1}}{|s|^{2\sigma-1}} &\le \frac{K_{i1}(t-s)}{K_{i1}(s)}\\
&\le \frac{1+\va}{1-\va}\frac{|s-t|^{2\sigma-1}}{|s|^{2\sigma-1}} \le \frac{1+\va}{1-\va} \frac{(R+|t|)^{2\sigma-1}}{R^{2\sigma-1}}. 
\end{align*}
If $|s|\ge\delta \widetilde m_i$, then it is elementary to check that
\begin{align*}
1-C(\delta)\frac{R}{m_i} \le \frac{K_{i1}(t-s)}{K_{i1}(s)}\le 1+C(\delta)\frac{R}{m_i}. \end{align*}
Therefore, we have for $|t|<R/2$.
\[
\frac{1-\va}{1+\va} \frac{(R-|t|)^{2\sigma-1}}{R^{2\sigma-1}}  \le \frac{h_1(R,t)}{h_1(R,0)}\le  \frac{1+\va}{1-\va} \frac{(R+|t|)^{2\sigma-1}}{R^{2\sigma-1}}. 
\]
Since $h_1(R,t)$ is non-increasing, by sending $R\to\infty$ and then $\va\to 0$, we obtain that 
\[
\lim_{R\to\infty} h_1(R,t)=\lim_{R\to\infty} h_1(R,0)=c_1\ge 0.
\]
Therefore, from \eqref{eq:blowupg} we have
\[
\Lda \tilde g(t)= \int_{\R}\frac{\tilde g(s)^{\frac{1+2\sigma}{1-2\sigma}}}{|t-s|^{1-2\sigma}}\,\ud s +c_1+c_2.
\]
Since $g$ is locally bounded, we have $c_1+c_2=0$, and thus,
\[
\Lda \tilde g(t)= \int_{\R}\frac{\tilde g(s)^{\frac{1+2\sigma}{1-2\sigma}}}{|t-s|^{1-2\sigma}}\,\ud s.
\]
The solutions of the above equation have been classified in \cite{CLO} and \cite{Li04}. 

Then
\begin{align*}
\Lda \int_{\R}\tilde g(t)^{\frac{2}{1-2\sigma}}=\int_{\R}\int_{\R}\frac{\tilde g(s)^{\frac{1+2\sigma}{1-2\sigma}}\tilde g(t)^{\frac{1+2\sigma}{1-2\sigma}}}{|t-s|^{1-2\sigma}}\,\ud s\,\ud s\le S(\sigma) \left(\int_{\R}\tilde g(t)^{\frac{2}{1-2\sigma}}\right)^{1+2\sigma},
\end{align*}
where $S(\sigma)$ is the sharp constant in the Hardy-Littlewood-Sobolev inequality \cite{Lieb83} in 1-D:
\be\label{eq:sharpHLS}
S(\sigma)=\sup_{f\in L^{\frac{2}{1+2\sigma}}(\R),\ f\neq 0}\frac{\|\int_{\R}|x-y|^{2\sigma-1}f(y)\,\ud y\|_{L^{\frac{2}{1-2\sigma}}(\R)}}{\|f\|_{L^{\frac{2}{1+2\sigma}}(\R)}}.
\ee
Since $\|f_i\|_{L^{p_i+1}(0,T)}=1$ and $f_i$ is periodic with period $T$, we have, for any fixed $R>0$,
\[
1=m_i^{\frac{1-2\sigma}{2\sigma}(\frac{1+2\sigma}{1-2\sigma}-q_i)}\int_{-Tm_i^{\frac{q_i-1}{2\sigma}}/2}^{Tm_i^{\frac{q_i-1}{2\sigma}}/2} \tilde g_i(t)^{q_i+1}\,\ud t \ge \int_{-R}^R  \tilde g_i(t)^{q_i+1}\,\ud t \to  \int_{-R}^R  \tilde g(t)^{\frac{2}{1-2\sigma}}\,\ud t. 
\]
Hence, $\int_{\R}  \tilde g(t)^{\frac{2}{1-2\sigma}}\,\ud t\le 1$, and thus
\be\label{eq:threasholdvalue}
J_{T}\le \Lda\le S(\sigma).
\ee

Now we are going to show that $J_{T}>S(\sigma)$, which will reach a contradiction. Let
\[
u_\lda(t)=\left(\frac{\lda}{\lda^2+|t-T/2|^2}\right)^{\frac{1+2\sigma}{2}},
\]
which are maximizers of $S(\sigma)$ in \eqref{eq:sharpHLS} for all $\lda>0$; see Lieb \cite{Lieb83}. Moreover,
\[
\int_{\R} u_\lda(t)^{\frac{2}{1+2\sigma}}\,\ud t=\int_{\R}\frac{1}{1+t^2}\,\ud t=\pi,
\]
and
\[
S(\sigma)\pi^{2\sigma} u_\lda^{\frac{1-2\sigma}{1+2\sigma}}(t)= \int_{\R}\frac{u_\lda(s)}{|t-s|^{1-2\sigma}}\,\ud s.
\]

Let $f_\lda=u_\lda$ on $[0,T]$, and extend it to be a periodic function on $\R$ with period $T$. Then 
\begin{align*}
\int_0^T f_\lda(t)^{\frac{2}{1+2\sigma}}\,\ud t\le \int_{\R} u_\lda(t)^{\frac{2}{1+2\sigma}}\,\ud t=\pi.
\end{align*}
For $|t|<1$, we have $|\sinh t|\le |t|+|t|^3$, and thus,
\[
\frac{1}{|2\sinh(\frac t2)|^{1-2\sigma}}\ge \frac{1}{|t|^{1-2\sigma}}\frac{1}{|1+|t|^2/4|^{1-2\sigma}}\ge \frac{1}{|t|^{1-2\sigma}}\left(1-\frac{|t|^2}{4}\right)=\frac{1}{|t|^{1-2\sigma}}-\frac{|t|^{1+2\sigma}}{4}.
\]
Choosing $\delta>0$ universally small such that for all $|t|\le 4\delta$, 
\[
-\frac{|t|^{1+2\sigma}}{4}+\frac{1}{|2\cosh(\frac t2)|^{1-2\sigma}}\ge \frac{1}{4^{1-2\sigma}}.
\]
Then for $|t|\le 4\delta$, we have
\[
K(t)=\frac{1}{|2\sinh(\frac t2)|^{1-2\sigma}}+\frac{1}{|2\cosh(\frac t2)|^{1-2\sigma}}\ge  \frac{1}{|t|^{1-2\sigma}}+ \frac{1}{4^{1-2\sigma}}.
\]
Then
\begin{align*}
&\int_0^T \int_0^T  K_T(t-s) f_\lda(s)f_\lda(t)\,\ud t\ud s\\
&\ge \int_{-\delta}^{\delta} u_\lda(t+T/2)\int_{-3\delta}^{3\delta}K(t-s) u_\lda(s+T/2)\,\ud s\ud t\\
&\ge \int_{-\delta}^{\delta} u_\lda(t+T/2)\int_{-3\delta}^{3\delta}\frac{u_\lda(s+T/2)}{|t-s|^{1-2\sigma}}\,\ud s\ud t+ \frac{1}{4^{1-2\sigma}} \left(\int_{-\delta}^{\delta} u_\lda(t+T/2)\,\ud t\right)^2\\
&\ge \int_{-\delta}^{\delta} u_\lda(t+T/2)\int_{\R}\frac{u_\lda(s+T/2)}{|t-s|^{1-2\sigma}}\,\ud s\ud t-C\lda+A \lda^{1-2\sigma}\\
&= S(\sigma)\pi^{2\sigma} \int_{-\delta}^{\delta} u_\lda(t+T/2)^{\frac{2}{1+2\sigma}}\ud t-C\lda+A \lda^{1-2\sigma}\\
&= S(\sigma)\pi^{1+2\sigma} -C\lda+A \lda^{1-2\sigma},
\end{align*}
where $A$ and $C$ are universal positive constants. Therefore,
\[
J_T\ge J_{T}[f_\lda]\ge \frac{\int_0^T \int_0^T  K_T(t-s) f_\lda(s)f_\lda(t)\,\ud t\ud s}{(\int_0^T f_\lda(t)^{\frac{2}{1+2\sigma}}\,\ud t)^{1+2\sigma}}\ge \frac{S(\sigma)\pi^{1+2\sigma} -C\lda+A \lda^{1-2\sigma}}{\pi^{1+2\sigma}}>S(\sigma)
\]
if $\lda>0$ is sufficiently small.
 
This finishes the proof of this proposition.
\end{proof}

\begin{proof}[Proof of Theorem \ref{thm:B}] It follows immediately from Propositions \ref{prop:fowler} and \ref{prop:fowler3}.
\end{proof}

\section{A Harnack type inequality of Schoen}\label{sec:harnack}

\begin{proof}[Proof of Theorem \ref{thm:C}]
The proof is very similar to that of Proposition \ref{prop:upbound}. The only visible difference is that we do not have \eqref{eq:wj-l-1} here. We still use the method of moving spheres.

By the tranformations $u_R(x)=R^{\frac{n-2\sigma}{2}} u(Rx)$ and $h_R(x)=R^{\frac{n-2\sigma}{2}} h(Rx)$, and observing
\[
\|h_R\|_{L^\infty(B_{5/2})}+\|\nabla \ln h_R\|_{L^\infty(B_{5/2})}\le A,
\]
we only need to prove \eqref{eq:thmcharnack} for $R=1$. We extend $u$ to be identically $0$ outside $B_3$, then we have
\be \label{eq:harnack1}
u(x)= \int_{\R^n} \frac{u(y)^{\frac{n+2\sigma}{n-2\sigma}}}{|x-y|^{n-2\sigma}}\,\ud y +h(y) \quad \mbox{for } x\in B_3.
\ee 
  
Suppose the contrary that there exists a sequence of solutions $u_j$ of \eqref{eq:harnack1} with $h_j$ such that
\[
u_j(x_j)\min_{\overline B_2} u_j>j\quad \mbox{as } j\to\infty,
\]
where $u_j(x_j)=\max_{\overline B_1}u_j$.

Consider
\[
v_j(x):=\left(1/2-|x-x_j|\right)^{\frac{n-2\sigma}{2}} u_j(x),\quad |x-x_j|\leq 1/2.
\]
Let $|\bar x_j-x_j|<1/2$ satisfy
\[
v_j(\bar x_j)=\max_{|x-x_j|\leq 1/2}v_j(x),
\]
and let
\[
2\mu_j:=1/2-|\bar x_j-x_j|.
\]
Then
\[
0<2\mu_j\leq 1/2\quad\mbox{and}\quad 1/2-|x-x_j|\ge\mu_j \quad \forall ~ |x-\bar x_j|\leq \mu_j.
\]
By the definition of $v_j$, we have
\be\label{eq:cl3-5}
(2\mu_j)^{\frac{n-2\sigma}{2}}u_j(\bar x_j)=v_j(\bar x_j)\ge v_j(x)\ge (\mu_j)^{\frac{n-2\sigma}{2}}u_j(x)\quad \forall ~ |x-\bar x_j|\leq \mu_j.
\ee
Thus, we have
\[
2^{\frac{n-2\sigma}{2}}u_j(\bar x_j)\ge u_j(x)\quad \forall ~ |x-\bar x_j|\leq \mu_j.
\]
We also have
\be\label{eq:cl4-5}
\begin{split}
(2\mu_j)^{\frac{n-2\sigma}{2}}u_j(\bar x_j)=v_j(\bar x_j)\ge v_j(x_j)&=2^{\frac{2\sigma-n}{2}} u_j(x_j)\\
&\ge 2^{\frac{2\sigma-n}{2}}\sqrt{u_j(x_j)\min_{\overline B_2} u_j}\\
&\ge 2^{\frac{2\sigma-n}{2}}\sqrt{j} \to \infty.
\end{split}
\ee
Let
\[
w_j(y)=\frac{1}{u_j(\bar x_j)} u_j\left(\bar x_j +\frac{y}{u_j(\bar x_j)^{\frac{2}{n-2\sigma}}}\right),\quad  \tilde h_j(y)=\frac{1}{u_j(\bar x_j)} h_j\left(\bar x_j +\frac{y}{u_j(\bar x_j)^{\frac{2}{n-2\sigma}}}\right)  \quad \mbox{in }\om_j,
\]
where
\[
\om_j=\Big \{y\in \R^n:\bar x_j +\frac{y}{u_j(\bar x_j)^{\frac{2}{n-2\sigma}}} \in B_2\Big\}.
\]
Then
\[
 w_j(y) =\int_{\R^n} \frac{w_j(z)^{\frac{n+2\sigma}{n-2\sigma}}}{|y-z|^{n-2\sigma}}\,\ud z +\tilde h_j(y)  \quad \mbox{for }y\in  \om_j
\]
and $w_j(0)=1$. Moreover, it follows from \eqref{eq:cl3-5} and \eqref{eq:cl4-5} that
\[
\|\tilde h_j\|_{C^0(\om_j)}\to 0, \quad  w_j(y)\leq 2^{\frac{n-2\sigma}{2}} \quad\mbox{in } B_{R_j},
\]
where \[R_j:=\mu_j u(\bar x_j)^{\frac{2}{n-2\sigma}}\to \infty \mbox{ as } j\to \infty.\] By the regularity results in Section 2.1 of \cite{JLX}, and the proof of Proposition 2.9 in \cite{JLX}, there exists $w>0$ such that
\[
w_j\to w \quad \mbox{in } C^{\al}_{loc}(\R^n)
\]
for some $\alpha>0$, and $w$ satisfies
\[
 w(y) =\int_{\R^n} \frac{w(z)^{\frac{n+2\sigma}{n-2\sigma}}}{|y-z|^{n-2\sigma}}\,\ud z   \quad \mbox{for }y \in \R^n.
\] 
Since $w(0)=1$, by the classification results in \cite{CLO} or \cite{Li04}, we have
\be \label{eq:cl5harnack}
w(y)=\left(\frac{1+\mu^2|y_0|^2}{1+\mu^2|y-y_0|^2}\right)^{\frac{n-2\sigma}{2}}
\ee
for some $\mu>0$ and some $y_0\in\R^n$. 

On the other hand, we are going to show that, for every $\lda>0$
\be\label{eq:aimharnack1}
w_{\lda}(y)\leq w(y)\quad \forall ~ |y|\ge\lda,
\ee
which will have a contradiction to \eqref{eq:cl5harnack}.

Let us arbitrarily fix $\lda_0>0$. Then for all $j$ large, we have $0<\lda_0<\frac{R_j}{10}$. Let
\[
\Sigma_j:=B_{\Gamma_j} \subset \subset \om_j,\quad\mbox{where }\Gamma_j=\frac{1}{4}u_j(\bar x_j)^\frac{2}{n-2\sigma}.
\]
We will show that for all sufficiently large $j$,
\be\label{eq:aim11harnack}
(w_j)_{\lda_0}(y)\leq w_j(y)\quad \forall ~ |y|\ge\lda_0,\ y\in \Sigma_j.
\ee
Then \eqref{eq:aimharnack1} follows from \eqref{eq:aim11harnack} by sending $j\to \infty$.

By Lemma \ref{lem:fix_error}, there exist $\bar r>0$  such that $h_j$ satisfies \eqref{eq:fix-er} for all $j$. Let $j$ be large such that  $\lda_0 u_j(\bar x_j)^{-\frac{1}{n-2\sigma}} <\bar r$. Thus, \eqref{eq:moving-f} holds for $\tilde h_j$, that is, for every $0<\lda\le \lda_0$
\[
(\tilde h_j)_{\lda}(y) \le  \tilde h_j(y) \quad \mbox{for }y\in \Sigma_j \backslash B_{\lda}.
\]

As in the proof of Proposition \ref{prop:upbound}, we first know that
\[
\bar \lda:=\sup \{0<\mu\le \lda_0\ |\ (w_j)_{\lda}(y)\leq w_{j}(y),\ \forall~|y-x_0|\geq \lda, ~y\in \Sigma_j,~\forall~ 0<\lda <\mu\},
\]
 is well defined.
 
 We are going to show that $\bar\lda=\lda_0$ for all sufficiently large $j$. In this step, there will be some small changes from the proof of Proposition \ref{prop:upbound}, and we explain it in the below.

We start from \eqref{eq:movingsphere} with $J$ defined in \eqref{eq:definitionofJ}. 

For $z\in \R^n\setminus \Sigma_j$ and $\bar \lda \le \lda \le \bar \lda+1$, we have $|z| \ge \frac14u_j(\bar x_j)^{\frac{2}{n-2\sigma}}$ and thus
\[
(w_j)_{\lda}(z) \le \left(\frac{\lda }{|z|}\right)^{n-2\sigma} \max_{B_{\bar \lda+1}} w_j \le C u_j(\bar x_j)^{-2}.
\]
By the definition of $w_j$, we have
\[
w_j(y)\ge \frac{\min_{\overline B_2}u_j}{u_j(\bar x_j)}\ge \frac{j}{u_j(\bar x_j)u_j(x_j)}\ge2^{\frac{2\sigma-n}{2}}\frac{j}{u_j(\bar x_j)^2} \quad \mbox{for all }y\in\om_j.
\]
It follows that for large $j$,
\[
 w_j(z)^{\frac{n+2\sigma}{n-2\sigma}}-(w_j)_{ \lda}(z)^{\frac{n+2\sigma}{n-2\sigma}} \ge ju_j(\bar x_j)^{-\frac{2n+4\sigma}{n-2\sigma}}\quad\mbox{in }\Omega_j\setminus\Sigma_j.
\]
Then, as in proving \eqref{eq:lower-J}, we can show that
\begin{align}
J(\lda, w_j,y) &\ge ju_j(\bar x_j)^{-\frac{2n+4\sigma}{n-2\sigma}}\int_{\om_j \setminus \Sigma_j } K(0,\bar \lda; y,z) \,\ud z- C \int_{\om_j^c} K(0,\bar \lda; y,z) \left(\frac{\lda }{|z|}\right)^{n+2\sigma}   \,\ud z  \nonumber  \\&
\ge \begin{cases}
\frac{j}{2} (|y|-\lda) u(\bar x_j)^{-\frac{2n}{n-2\sigma}},& \quad \mbox{if }\lda\le |y|\le \bar \lda +1,\\
\frac{j}{2}u(\bar x_j)^{-\frac{2n}{n-2\sigma}}, & \quad \mbox{if } |y|>\bar \lda +1, y\in\Sigma_j.
\end{cases}
\label{eq:lower-J-harnack}
\end{align}
Consequently, by \eqref{eq:movingsphere} and \eqref{eq:lower-J-harnack}, there exists $\va_1\in (0,1/2)$ (which depends on $j$) such that
\[
w_j(y)-(w_j)_{\bar \lda}(y) \ge \frac{\va_1}{|y|^{n-2\sigma}} \quad \forall~|y|\ge \bar \lda+1,\ y\in\Sigma_j.
\]
The rest proof of $\bar \lda=\lda_0$ is identical to that of Proposition \ref{prop:upbound}, and we do not repeat here.
\end{proof}

\begin{appendices}

\section{Pohozaev identities}\label{appendix:pohozaev}

The following is the Pohozaev identity needed for the case that $\sigma$ is an integer.
\begin{prop}[Pohozaev type identity, Proposition 3.3 in \cite{GPY}] \label{prop:GPY} If $\sigma$ is an integer, we have, for any $u,v\in C^{2\sigma}(B_2\setminus \{0\})$,
\begin{align*}
&\int_{B_R\setminus B_r } (-\Delta)^\sigma u\  \langle x,\nabla v\rangle+ (-\Delta)^\sigma v \  \langle x,\nabla u\rangle\\& = \int_{\pa B_R} g_\sigma(u,v)\,\ud s-\int_{\pa B_r} g_\sigma(u,v)\,\ud s  -\frac{n-2\sigma}{2} \int_{B_R\setminus B_r } \Big((-\Delta)^\sigma u v+ (-\Delta)^\sigma v u\Big),
\end{align*}
where $0<r<R<1$, $g_\sigma(u,v)$  has the following form
\begin{equation}\label{eq:gpohozaev}
g_\sigma(u,v)=\sum_{j=1}^{2\sigma-1}\bar l_j (x, \nabla^j u, \nabla^{2\sigma-j} v) + \sum_{j=0}^{2\sigma-1} \tilde l_j (\nabla^j u, \nabla^{2\sigma-j-1} v)
\end{equation}
and $\bar l_j (x, \nabla^j u, \nabla^{2\sigma-j} v)$ and $ \tilde l_j (\nabla^j u, \nabla^{2\sigma-j-1} v)$ are linear in each component. Moreover
\begin{equation}\label{eq:pohozaevleft}
\int_{\pa B_R}\tilde l_{2\sigma-1}(\nabla^{2\sigma-1} u, v)=(-1)^\sigma\frac{n-2\sigma}{2} \int_{\pa B_R} \frac{\pa \Delta^{\sigma-1} u}{\pa\nu} v,
\end{equation}
where $\nu$ is the unit outer normal of $B_R$.
\end{prop}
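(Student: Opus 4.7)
The plan is to derive the identity by combining Green's second identity (applied $\sigma$ times) with the radial commutator formula for $\Delta$, and then symmetrize. The whole argument rests on one key algebraic identity, and the rest is bookkeeping of boundary terms.

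\textbf{Step 1 (commutator identity).} First I would establish by induction on $k$ that
\[
\Delta^k(\langle x,\nabla v\rangle) = \langle x,\nabla \Delta^k v\rangle + 2k\,\Delta^k v.
\]
The base case $k=1$ is a direct computation ($\Delta(x_i\partial_i v) = 2\Delta v + x_i\partial_i\Delta v$), and the inductive step is immediate. Multiplying by $(-1)^\sigma$ yields
\[
(-\Delta)^\sigma(\langle x,\nabla v\rangle) = \langle x,\nabla (-\Delta)^\sigma v\rangle + 2\sigma(-\Delta)^\sigma v.
\]

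\textbf{Step 2 (transfer the operator).} Using Green's second identity iteratively on $B_R\setminus B_r$, I would move $(-\Delta)^\sigma$ from $u$ onto $\langle x,\nabla v\rangle$: for every pair of smooth functions $\phi,\psi$,
\[
\int_{B_R\setminus B_r}(-\Delta)^\sigma \phi \cdot \psi = \int_{B_R\setminus B_r}\phi \cdot (-\Delta)^\sigma\psi + \mathcal{B}(\phi,\psi),
\]
where $\mathcal{B}(\phi,\psi)$ is a sum over $\partial B_R$ and $\partial B_r$ of multilinear expressions in $\nabla^j \phi,\nabla^{2\sigma-1-j}\psi$ with $0\le j\le 2\sigma-1$. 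Plugging $\psi=\langle x,\nabla v\rangle$ and applying Step 1 gives
\[
\int_{B_R\setminus B_r}(-\Delta)^\sigma u\cdot\langle x,\nabla v\rangle = \int_{B_R\setminus B_r} u\,\langle x,\nabla (-\Delta)^\sigma v\rangle + 2\sigma\int_{B_R\setminus B_r} u(-\Delta)^\sigma v + \mathcal{B}(u,\langle x,\nabla v\rangle).
\]

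\textbf{Step 3 (divergence reorganization).} I would then integrate by parts once on the first term using $\nabla\cdot(xu)=nu+\langle x,\nabla u\rangle$, producing
\[
\int u\,\langle x,\nabla (-\Delta)^\sigma v\rangle = -n\!\int u(-\Delta)^\sigma v - \int (-\Delta)^\sigma v\,\langle x,\nabla u\rangle + \int_{\partial(B_R\setminus B_r)}\!\!\!\langle x,\nu\rangle u (-\Delta)^\sigma v.
\]
Combining with Step 2 yields
\[
\int_{B_R\setminus B_r}\!\![(-\Delta)^\sigma u\cdot\langle x,\nabla v\rangle + (-\Delta)^\sigma v\cdot\langle x,\nabla u\rangle] = -(n-2\sigma)\!\int_{B_R\setminus B_r}\!\!u(-\Delta)^\sigma v + \widetilde{\mathcal{B}}(u,v),
\]
where $\widetilde{\mathcal{B}}$ collects all boundary terms so far.

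\textbf{Step 4 (symmetrization).} To match the stated right-hand side, I would write $-(n-2\sigma)\int u(-\Delta)^\sigma v$ as $-\tfrac{n-2\sigma}{2}\int[u(-\Delta)^\sigma v+(-\Delta)^\sigma u\cdot v]$ by applying the transfer formula from Step 2 one more time (with $\phi=u,\psi=v$). This swap produces one additional boundary contribution; by the explicit iterated Green identity
\[
\int u(-\Delta)^\sigma v - \int (-\Delta)^\sigma u\cdot v = (-1)^\sigma\sum_{j=0}^{\sigma-1}\int_{\partial(B_R\setminus B_r)}\!\bigl[\Delta^j u\,\partial_\nu\Delta^{\sigma-1-j}v - \partial_\nu\Delta^j u\,\Delta^{\sigma-1-j}v\bigr],
\]
the term with highest order in $u$ comes from $j=\sigma-1$ and contributes precisely $(-1)^{\sigma+1}\int_{\partial B}\partial_\nu\Delta^{\sigma-1}u\cdot v$, which after multiplication by $-\tfrac{n-2\sigma}{2}$ yields the coefficient $(-1)^\sigma\tfrac{n-2\sigma}{2}$ appearing in \eqref{eq:pohozaevleft}.

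\textbf{Step 5 (packaging the boundary).} Finally, I would split every boundary term according to whether it contains an explicit factor of $x$ (which comes only from the $\langle x,\nu\rangle$ in Step 3 and from $\mathcal{B}(u,\langle x,\nabla v\rangle)$, giving the $\bar l_j(x,\nabla^j u,\nabla^{2\sigma-j}v)$ pieces) or not (giving the $\tilde l_j(\nabla^j u,\nabla^{2\sigma-j-1}v)$ pieces). Each such term is multilinear of the claimed orders because every Green step lowers the order on one factor by two, distributed across $j=0,\dots,2\sigma-1$.

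The main obstacle is Step 5: one must keep the orders of the two factors tracked across each of the $\sigma$ Green swaps and the one divergence step, and verify that the sole term of the form $\partial_\nu\Delta^{\sigma-1}u\cdot v$ (with no remaining $x$ factor and with $v$ appearing undifferentiated) comes exactly with the coefficient $(-1)^\sigma\tfrac{n-2\sigma}{2}$. Everything else is structural and follows from multilinearity of the boundary operators produced by Green's identity.
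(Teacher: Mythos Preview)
The paper does not prove this proposition; it is quoted verbatim as Proposition 3.3 of \cite{GPY} and used as a black box. Your proposal supplies what the paper omits, and the route you outline---commutator formula $\Delta^k(\langle x,\nabla v\rangle)=\langle x,\nabla\Delta^k v\rangle+2k\Delta^k v$, iterated Green transfer, one divergence step, then symmetrization---is the standard derivation and is correct.

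Two small remarks on the bookkeeping. First, the boundary term $\int_{\partial}\langle x,\nu\rangle\,u\,(-\Delta)^\sigma v$ produced in Step~3 has order $0$ on $u$ and $2\sigma$ on $v$; since the stated sum $\sum_{j=1}^{2\sigma-1}\bar l_j$ begins at $j=1$, this term does not literally fit the displayed indexing. It is nonetheless of the correct structural type (explicit $x$ factor, total order $2\sigma$), so the discrepancy is cosmetic rather than mathematical---the proposition only claims $g_\sigma$ \emph{has the form} of such a sum, and one could equally index from $j=0$. Second, your argument that the unique $\tilde l_{2\sigma-1}$ contribution (i.e.\ the only boundary term with $v$ undifferentiated and no $x$ factor) arises solely from the symmetrization in Step~4 is correct: every term in $\mathcal{B}(u,\langle x,\nabla v\rangle)$ carries at least one derivative on $v$ because $\langle x,\nabla v\rangle$ already does, and the Step~3 boundary term carries the $x$ factor. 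So the coefficient $(-1)^\sigma\tfrac{n-2\sigma}{2}$ in \eqref{eq:pohozaevleft} is indeed isolated exactly as you say.
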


Suppose $\sigma$ is an integer. Let
\begin{equation}\label{eq:pohozaevinteger}
P(u,r)= \int_{\pa B_r} g_\sigma(u,u) \,\ud s - \frac{n-2\sigma}{n} c(n,\sigma)r \int_{\pa B_r} u^{\frac{2n}{n-2\sigma}}\,\ud s,
\end{equation}
where $g_\sigma$ is given in \eqref{eq:gpohozaev}.

\begin{prop} \label{prop:pohozaev}
Suppose $\sigma$ is an integer, and $u\in C^{2\sigma}(B_2\setminus\{0\})$ is a positive solution of 
\[
(-\Delta)^\sigma u= c(n,\sigma) u^{\frac{n+2\sigma}{n-2\sigma}} \quad \mbox{in }B_2\setminus\{0\}.
\]
Let $P(u,r)$ be as in \eqref{eq:pohozaevinteger}. Then $P(u,r)$ is  independent of $r$.
\end{prop}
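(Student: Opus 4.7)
The plan is to apply Proposition \ref{prop:GPY} directly with $v=u$ and exploit the critical structure of the equation so that all bulk terms collapse and only boundary terms survive. Concretely, with $p=\frac{n+2\sigma}{n-2\sigma}$ so that $p+1=\frac{2n}{n-2\sigma}$ and $\frac{2n}{p+1}=n-2\sigma$, taking $u=v$ in Proposition \ref{prop:GPY} yields
\begin{align*}
2\int_{B_R\setminus B_r}(-\Delta)^\sigma u\,\langle x,\nabla u\rangle
&=\int_{\pa B_R}g_\sigma(u,u)-\int_{\pa B_r}g_\sigma(u,u)\\
&\quad -(n-2\sigma)\int_{B_R\setminus B_r}(-\Delta)^\sigma u\cdot u,
\end{align*}
valid for $0<r<R<2$ since $u\in C^{2\sigma}$ on the closed annulus.

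Next I will use the equation $(-\Delta)^\sigma u=c(n,\sigma)u^p$ to rewrite both volume integrals in terms of $u^{p+1}$. For the first one, $\langle x,\nabla u\rangle u^p=\frac{1}{p+1}\langle x,\nabla(u^{p+1})\rangle$, and the divergence theorem on the annulus gives
\[
\int_{B_R\setminus B_r}\langle x,\nabla(u^{p+1})\rangle
=-n\int_{B_R\setminus B_r}u^{p+1}+R\int_{\pa B_R}u^{p+1}-r\int_{\pa B_r}u^{p+1}.
\]
For the second volume term, $\int(-\Delta)^\sigma u\cdot u=c(n,\sigma)\int u^{p+1}$. Substituting both back and collecting, the coefficient of the annulus integral of $u^{p+1}$ on the right-hand side becomes
\[
-\frac{2nc(n,\sigma)}{p+1}+(n-2\sigma)c(n,\sigma),
\]
and this vanishes exactly because $\frac{2n}{p+1}=n-2\sigma$. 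This critical-exponent cancellation is the only nontrivial ingredient and I do not anticipate any obstacle beyond it.

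After the cancellation, what remains is
\[
\int_{\pa B_R}g_\sigma(u,u)-\int_{\pa B_r}g_\sigma(u,u)
=\frac{(n-2\sigma)c(n,\sigma)}{n}\left(R\int_{\pa B_R}u^{p+1}-r\int_{\pa B_r}u^{p+1}\right),
\]
where I have used $\frac{2}{p+1}=\frac{n-2\sigma}{n}$. Rearranging gives $P(u,R)=P(u,r)$, which is the claim. Thus the proof is essentially a direct application of the known Pohozaev identity combined with the algebraic identity $\frac{2n}{p+1}=n-2\sigma$ characterizing the critical exponent; no further analytic estimates are required since $u$ is classical on every closed annulus $\overline{B_R}\setminus B_r$.
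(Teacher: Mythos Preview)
Your proof is correct and follows exactly the approach the paper indicates: apply Proposition \ref{prop:GPY} with $v=u$, substitute the equation, and integrate by parts so that the critical-exponent identity $\frac{2n}{p+1}=n-2\sigma$ kills the bulk term. The paper's one-line proof ``It follows from Proposition \ref{prop:GPY}, the equation of $u$ and integration by parts'' is precisely what you have spelled out.
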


\begin{proof}   
It follows from Proposition \ref{prop:GPY}, the equation of $u$ and integration by parts.
\end{proof}

If $\sigma$ is not an integer, then we will use an extension formulation for $(-\Delta)^\sigma$ (see \cite{CS2,CG,Y}), so that we will work in $\R^{n+1}$. We use capital letters, such as $X=(x,t)\in\R^n\times\R$, to denote points in $\R^{n+1}$. Let $m\ge 0$ be an integer, 
\[
\Delta_b:= \Delta_{x,t} +\frac{b}{t} \pa_t=t^{-b} \mathrm{div}(t^{b} \nabla ),
\]
and $\Delta_b^m U=\Delta_b(\Delta_b^{m-1} U)$. We use the convention that $\Delta_b^0 U=U$. We will derive a Pohozaev-type identity for $\Delta_b^{m+1} U$. A Pohozaev-type identity for $\Delta_b^{2} U$ was obtained and used in \cite{FF}.

\begin{prop}[Pohozaev type identity] \label{prop:GPYsigma}  Let $E \subset \R^{n+1}$ be a bounded piecewise-smooth domain and $U\in C^{2m+2}(\overline E)$. Then
\begin{align*}
\int_{E}\mathrm{div}(t^{b}\nabla \Delta_b^m U) \langle X,\nabla U\rangle 
&=-\frac{n+b-1-2m}{2} \int_E \mathrm{div}(t^{b} \nabla \Delta_b^{m}  U) U\\
& \quad + \frac{n+b-1-2m}{2}  \int_{\pa E} t^b \pa_\nu \Delta_b^m U U \\
&\quad +  \int_{\pa E} t^b \pa_\nu \Delta_b^m U \langle X,\nabla U\rangle +\int_{\pa E}Q_m(U),
\end{align*}
where $\nu$ is the unit outer normal of $E$, 
\[
 \begin{split}
 Q_0(U)&=-\frac 12 \int_{\pa E} t^b \nabla U\nabla U X^k\nu_k  \\
Q_1(U)&=\int_{\pa E} \Big( t^b \pa_\nu U X^{k}\pa_k \Delta_b U+ t^b  \Delta_b U  \pa_\nu U - t^b \nabla U\nabla \Delta_b U X^k\nu_k  -t^b \Delta_b U \nu_lX^k \pa_{kl}U \Big)\\
Q_m(U)&=\int_{\pa E} Q_{m-2}(\Delta_bU)  \\
&\quad+ \frac{(n-1+b-2(m-2))}{2}\int_{\pa E} t^b \Delta_bU\pa_\nu \Delta_b^{m-1}U+\int_{\pa E} t^b \pa_\nu \Delta_b^{m-1} U X^k\pa_k \Delta_bU \\
&\quad  -\frac{(n-1+b-2(m-1))}{2}\int_{\pa E} t^b \pa_\nu U \Delta_b^mU-\int_{\pa E} t^b \Delta_b^m U \nu_lX^k \pa_{kl}U
\end{split}
\]
for $m\ge 2$.

\end{prop}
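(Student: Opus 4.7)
My plan is to proceed by induction on $m$, with the engine being the commutator identity $\Delta_b R(f) = R(\Delta_b f) + 2\Delta_b f$, where $R(f) := \langle X,\nabla f\rangle$. This combines the classical Euler relation $\Delta R(f) = R(\Delta f) + 2\Delta f$ with the analogous computation $\tfrac{b}{t}\pa_t R(f) - R\bigl(\tfrac{b}{t}\pa_t f\bigr) = \tfrac{2b}{t}\pa_t f$, which follows from writing $X^k\pa_k$ in coordinates and using $\pa_t X^{n+1}=1$. Iterating gives $\Delta_b^k R(f) = R(\Delta_b^k f) + 2k\Delta_b^k f$ for every $k\ge 0$, and this is what will allow the identity at level $m$ to collapse onto the identity at level $m-2$ applied to $\Delta_b U$.

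I would dispatch the base cases $m=0$ and $m=1$ by direct integration by parts. For $m=0$, rewriting $\mathrm{div}(t^b\nabla U)R(U) = \mathrm{div}\bigl(t^b R(U)\nabla U\bigr) - t^b|\nabla U|^2 - \tfrac12 t^b R(|\nabla U|^2)$ and integrating the last term via $\mathrm{div}(t^b X) = (n+b+1)t^b$ produces the dimensional factor $\tfrac{n+b-1}{2}$ together with the boundary integrand $Q_0(U) = -\tfrac12 t^b|\nabla U|^2 X^k\nu_k$. For $m=1$, I would integrate twice, first to peel $\mathrm{div}(t^b\nabla\Delta_b U)$ off $R(U)$ and then to handle the interior term $t^b\nabla\Delta_b U\cdot\nabla R(U)$, collecting the residual boundary integrands into $Q_1(U)$.

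For the inductive step $m\ge 2$, the central move is a single application of Green's second identity for the operator $V\mapsto\mathrm{div}(t^b\nabla V)$ with $V=\Delta_b^m U$ and $W=R(U)$, giving
\[
\int_E t^b\Delta_b^{m+1}U\,R(U) = \int_E t^b\Delta_b^m U\,\Delta_b R(U) + \int_{\pa E} t^b\bigl(\pa_\nu\Delta_b^m U\,R(U) - \Delta_b^m U\,\pa_\nu R(U)\bigr).
\]
The commutator identity splits the first bulk integral into $\int_E t^b\Delta_b^m U\,R(\Delta_b U) + 2\int_E t^b\Delta_b^m U\,\Delta_b U$, and the first piece is precisely the left-hand side of the proposition at level $m-2$ applied to $\Delta_b U$ in place of $U$. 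The induction hypothesis therefore supplies the term $\int_{\pa E} Q_{m-2}(\Delta_b U)$ together with three additional boundary integrals and a bulk piece proportional to $\int_E t^b\Delta_b^m U\,\Delta_b U$.

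It then remains to consolidate. The two resulting bulk contributions combine with coefficient $2 - \tfrac{n+b+3-2m}{2} = -\tfrac{n+b-1-2m}{2}$, and one further application of Green's identity rewrites $\int_E t^b\Delta_b^m U\,\Delta_b U$ as $\int_E \mathrm{div}(t^b\nabla\Delta_b^m U)\,U$ plus the boundary pair $\int_{\pa E} t^b(\pa_\nu\Delta_b^m U\,U - \Delta_b^m U\,\pa_\nu U)$. The elementary splitting $\pa_\nu R(U) = \pa_\nu U + \nu_l X^k\pa_{kl}U$ then lets me pair every surviving boundary integrand against one of the five terms defining $Q_m(U)$. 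The main obstacle will be purely combinatorial bookkeeping: verifying that the coefficients $\tfrac{n-1+b-2(m-1)}{2}$ and $\tfrac{n-1+b-2(m-2)}{2}$ emerge with precisely the signs stipulated in the statement, which demands careful tracking of which boundary contributions come from the initial Green's identity, which come from the induction hypothesis at level $m-2$, and which come from the splitting of $\pa_\nu R(U)$.
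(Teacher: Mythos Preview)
Your proposal is correct and follows essentially the same inductive strategy as the paper: both handle $m=0,1$ by direct integration by parts, and for $m\ge 2$ reduce to the identity at level $m-2$ applied to $\Delta_b U$, then collect the boundary terms into $Q_m$. The paper carries out the reduction via an explicit one-step integration-by-parts formula (their display (\ref{eq:poho-1})), whereas you package the same manipulation as Green's second identity together with the commutator $\Delta_b R(f)=R(\Delta_b f)+2\Delta_b f$; the underlying computation and the resulting coefficient bookkeeping are identical.
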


\begin{proof} 
We use $X=(X^1,\cdots,X^n,X^{n+1})$, and $X^k\pa_k U=\langle X,\nabla U\rangle=\sum_{k=1}^{n+1}X^k\pa_{X^k} U$.

We first prove it for $m=0$. Let $V$ satisfy the same conditions as $U$. By integration by parts,
\begin{align*}
&\int_{E}\mathrm{div}(t^{b}\nabla U) X^k\pa_k V=-\int_{E} t^b\nabla U \nabla(X^k \pa_k V) +\int_{\pa E} t^b \pa_\nu U X^{k}\pa_k V\\&
=-\int_{E} t^b\nabla U (\nabla V+X^k \pa_k \nabla V)  +\int_{\pa E} t^b \pa_\nu U X^{k}\pa_k V \\&
=\int_E \pa_k(X^k t^b \nabla U)\nabla  V -\int_{E} t^b \nabla U\nabla V - \int_{\pa E }t^b \nabla U\nabla V X^k\nu_k  +\int_{\pa E} t^b \pa_\nu U X^{k}\pa_k V \\&
=\int_E  t^b \nabla (X^k\pa_k U)\nabla  V +(n-1+b)\int_{E} t^b \nabla U\nabla V - \int_{\pa E }t^b \nabla U\nabla V X^k\nu_k  \\
&\quad+\int_{\pa E} t^b \pa_\nu U X^{k}\pa_k V.
\end{align*}
Integrating by parts again, it follows that
\begin{align*}
&\int_{E}\mathrm{div}(t^{b}\nabla U) X^k\pa_k V  +\int_{E}\mathrm{div}(t^{b}\nabla V) X^k\pa_k U   \\&
=-\frac{(n-1+b)}{2}\int_{E} U\mathrm{div}(t^b\nabla V) + V\mathrm{div}(t^b\nabla U) +\frac{(n-1+b)}{2} \int_{\pa E} t^b U\pa_\nu V+t^b \pa_\nu UV  \\&
\quad  +\int_{\pa E} \Big(t^b X^k\pa_k U \pa_\nu V + t^b X^{k}\pa_k V\pa_\nu U - t^b \nabla U\nabla V X^k\nu_k  \Big).
\end{align*}
This proves the case when $m=0$ by setting $V=U$. 

For $m\ge 1$, we have
\begin{equation}
\label{eq:poho-1}
\begin{split}
&\int_{E}\mathrm{div}(t^{b}\nabla \Delta_b^m U) X^k\pa_k V  \\&=-\int_{E} t^b\nabla \Delta_b^m U (\nabla V+X^k \pa_k \nabla V)  +\int_{\pa E} t^b \pa_\nu \Delta_b^m U X^{k}\pa_k V\\&
= \int_E \mathrm{div}(t^{b} \nabla \Delta_b^{m-1} U) X^k \pa_k \Delta_b V +\Delta^{m}_b U\mathrm{div}(t^b  \nabla V) - t^b\nabla \Delta_b^m U \nabla V\\&
\quad +\int_{\pa E} t^b \pa_\nu \Delta_b^m U X^{k}\pa_k V -\int_{\pa E} t^b \Delta_b^m U \nu_lX^k \pa_{kl}V\\&
= \int_E \mathrm{div}(t^{b} \nabla \Delta_b^{m-1} U) X^k \pa_k \Delta_b V +2\mathrm{div}(t^{b} \nabla \Delta_b^{m} U)V  \\&\quad +\int_{\pa E} t^b \pa_\nu \Delta_b^m U X^{k}\pa_k V -\int_{\pa E} t^b \Delta_b^m U \nu_lX^k \pa_{kl}V \\&
\quad +\int_{\pa E} t^b  \Delta_b^m U  \pa_\nu V-2 \int_{\pa E}  t^b  \pa_\nu  \Delta_b^m U V,
\end{split}
\end{equation}
where we used
\begin{align*}
\mathrm{div} (t^b X^k\pa_k \nabla V)&= X^k \pa_k \mathrm{div}(t^b  \nabla V) +(1-b)\mathrm{div}(t^b  \nabla V)\\&
=t^b X^k \pa_k \Delta_b V +\mathrm{div}(t^b  \nabla V).
\end{align*}
If $m=1$, by what we have proved for $m=0$, it follows that
\begin{align*}
& \int_E \mathrm{div}(t^{b} \nabla U) X^k \pa_k \Delta_b V +\mathrm{div}(t^{b} \nabla \Delta_b V) X^k \pa_k U \\&
=-\frac{(n-1+b)}{2}\int_{E} U\mathrm{div}(t^b\nabla \Delta_b V) + \Delta_b V\mathrm{div}(t^b\nabla U)\\ &\quad+\frac{(n-1+b)}{2} \int_{\pa E} t^b U\pa_\nu \Delta_bV+t^b \pa_\nu U \Delta_b V  \\&
\quad  +\int_{\pa E} \Big( t^b X^k\pa_k U \pa_\nu \Delta_b V - t^b \nabla U\nabla \Delta_b V X^k\nu_k  + t^b \pa_\nu U X^{k}\pa_k \Delta_b V\Big).
\end{align*}
Note that
\begin{align}\label{eq:integration UV}
\int_E \Delta_b V\mathrm{div}(t^b\nabla U) =\int_E V \mathrm{div}(t^b\nabla \Delta_b U) +\int_{\pa E} t^b \pa_\nu V \Delta_bU- t^bV \pa_\nu \Delta_b U.
\end{align}
By setting $V=U$, we obtain
\begin{align*}
&\int_{E}\mathrm{div}(t^{b}\nabla \Delta_b U) X^k\pa_k U\\
& =-\frac{(n-1+b-2)}{2}\int_{E} U\mathrm{div}(t^b\nabla \Delta_b U) +\frac{(n-1+b-2)}{2} \int_{\pa E} t^b U\pa_\nu \Delta_bU  \\
&\quad+\int_{\pa E} t^b \pa_\nu \Delta_b U X^k\pa_k U  \\
&+\int_{\pa E} \Big( t^b \pa_\nu U X^{k}\pa_k \Delta_b U+ t^b  \Delta_b U  \pa_\nu U - t^b \nabla U\nabla \Delta_b U X^k\nu_k  -t^b \Delta_b U \nu_lX^k \pa_{kl}U \Big).
\end{align*}

Hence, we proved it for $m=1$. 

Suppose it holds up to  $m-1$ with $m\ge 2$. We are going to show it for $m$. We start from \eqref{eq:poho-1} and calculate the first term on the right-hand side of \eqref{eq:poho-1}:
\begin{align*}
&\int_E \mathrm{div}(t^{b} \nabla \Delta_b^{m-1} U) X^k \pa_k \Delta_b U\\
&=\int_E \mathrm{div}(t^{b} \nabla \Delta_b^{m-2} (\Delta_bU)) X^k \pa_k \Delta_b U  \\
& =\frac{(n-1+b-2(m-2))}{2}\left(-\int_{E} \Delta_bU\mathrm{div}(t^b\nabla \Delta_b^{m-1} U) +\int_{\pa E} t^b \Delta_bU\pa_\nu \Delta_b^{m-1}U\right)  \\
&\quad+\int_{\pa E} t^b \pa_\nu \Delta_b^{m-1} U X^k\pa_k \Delta_bU  +\int_{\pa E} Q_{m-2}(\Delta_bU).
\end{align*}
By \eqref{eq:integration UV}, we have
\begin{align*}
\int_E \Delta_b U\mathrm{div}(t^b\nabla \Delta_b^{m-1}U) &=\int_E U \mathrm{div}(t^b\nabla \Delta_b^m U) +\int_{\pa E} t^b \pa_\nu U \Delta_b^mU- t^bU \pa_\nu \Delta_b^m U.
\end{align*}
Combining with \eqref{eq:poho-1} with $U=V$, we have
\begin{align*}
&\int_{E}\mathrm{div}(t^{b}\nabla \Delta_b^m U) X^k\pa_k U  \\
& =-\frac{(n-1+b-2m)}{2}\int_{E} \Delta_bU\mathrm{div}(t^b\nabla \Delta_b^{m-1} U)  +\frac{(n-1+b-2m)}{2}\int_{\pa E}  t^bU \pa_\nu \Delta_b^m U\\
&\quad+\int_{\pa E} t^b \pa_\nu \Delta_b^m U X^{k}\pa_k U +\int_{\pa E} Q_{m-2}(\Delta_bU)  \\
&\quad+ \frac{(n-1+b-2(m-2))}{2}\int_{\pa E} t^b \Delta_bU\pa_\nu \Delta_b^{m-1}U+\int_{\pa E} t^b \pa_\nu \Delta_b^{m-1} U X^k\pa_k \Delta_bU \\
&\quad  -\frac{(n-1+b-2(m-1))}{2}\int_{\pa E} t^b \pa_\nu U \Delta_b^mU-\int_{\pa E} t^b \Delta_b^m U \nu_lX^k \pa_{kl}U 
\end{align*}
This completes the proof.
\end{proof}

\section{Estimates for Poisson extensions}\label{appendix:poisson}
This appendix is to show some estimates for Poisson extensions, which are needed for the proof of Proposition \ref{prop:zero} when $\sigma\in (0,\frac n2)$ is not an integer. 

Let
\[
\mathcal{P}_\sigma(x,t)= \beta(n,\sigma) \frac{t^{2\sigma}}{(|x|^2+t^2)^{\frac{n+2\sigma}{2}}}, \quad (x,t)\in \R^{n+1}_+,
\]
where $\beta(n,\sigma)$ is a normalization constant such that $ \beta(n,\sigma)  \int_{\R^n}(|x|^2+1)^{-\frac{n+2\sigma}{2}}\,\ud x=1$, and let
 \begin{equation}\label{eq:poissonextension}
 V(x,t)= \mathcal{P}_\sigma * v(x,t)= \beta(n,\sigma) \int_{\R^n} \frac{t^{2\sigma}}{(|x-y|^2+t^2)^{\frac{n+2\sigma}{2}}}v(y)\,\ud y.
 \end{equation}
Let $m=[\sigma]$, $b=1-2(\sigma-m) $ and
\[
\Delta_b:= \Delta_{x,t} +\frac{b}{t} \pa_t=t^{-b} \mathrm{div}(t^{b} \nabla ).
\]
If $v$ and its derivatives have fast decay at infinity (e.g., in certain fractional Sobolev spaces), then we have $V(x,0)=v(x)$ and
\begin{align}
\Delta_b^{m+1} V&=0 \quad \mbox{in }\R^{n+1}_+,\label{eq:extensionharmonic}\\
\lim_{t\to 0} t^{b} \pa_t \Delta_b^k V&=0 , \quad k=0,\dots, m-1,\label{eq:extensionboundaryzero}\\
(-1)^{m+1}\lim_{t\to 0} t^{b} \pa_t \Delta_b^m V&= N_{n,\sigma} (-\Delta)^\sigma v(x).\label{eq:extensionneumann}
\end{align}
where $N_{n,\sigma}$ is a positive constant depending only on $n, \sigma$. The equations \eqref{eq:extensionharmonic}, \eqref{eq:extensionboundaryzero} and \eqref{eq:extensionneumann}  were proved Caffarelli-Silvestre \cite{CS2} for $\sigma\in(0,1)$ and  Yang\cite{Y} for $\sigma\in (1,\frac n2)$. 


We will verify that the equations \eqref{eq:extensionharmonic}, \eqref{eq:extensionboundaryzero} and \eqref{eq:extensionneumann} also hold for our solution $v$ defined in \eqref{eq:definitionofv} in certain domains. Denote $\B_R$ as the ball in $\R^{n+1}$ with radius $R$ and center at the origin, $\B^+_R$ as the upper half ball $\B_R\cap \R^{n+1}_+$, $\pa' \B_R$ as the flat part of $\pa \B_R$ which is the ball $B_R$ in $\R^{n}$ and $\pa'' \B_R= \pa \B_R \cap \{t>0\}$.

\begin{prop}\label{lem:cut-off1}
Let $v\in W^{2m,1}_{loc}(\R^n) \cap C^{\infty}(B_2)$ be a positive function in $\R^n$, where $m=[\sigma]$. Suppose that 
\begin{align}\label{eq:cut-offassumption}
\int_{\R^n}\frac{|\nabla^kv(y)|}{1+|y|^{n+2\sigma-k}}\,\ud y<\infty\quad, k=0,1,\cdots, 2m. 
\end{align}
Let $V$ be as in \eqref{eq:poissonextension}. Then the left hand sides of \eqref{eq:extensionboundaryzero} and \eqref{eq:extensionneumann} are H\"older continuous functions in $\overline{\B_1^+}$, and both \eqref{eq:extensionboundaryzero} and \eqref{eq:extensionneumann} hold for all $x\in B_1$.
\end{prop}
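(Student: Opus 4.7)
The plan is to prove the proposition via a localization decomposition $v=v_1+v_2$, where the inner piece is smooth and compactly supported (so the classical extension theory of Caffarelli--Silvestre, Yang and Chang--Gonz\'alez applies directly), and the outer tail is harmless near $B_1$ (so its Poisson extension can be analyzed by differentiation under the integral sign). Fix a cut-off $\eta\in C_c^\infty(B_2)$ with $\eta\equiv 1$ on $B_{3/2}$, and set $v_1=\eta v$, $v_2=(1-\eta)v$. Since $v\in C^\infty(B_2)$, we have $v_1\in C_c^\infty(B_2)$; and $v_2\in W^{2m,1}_{loc}(\R^n)$ is supported in $\{|y|\ge 3/2\}$, still satisfying \eqref{eq:cut-offassumption}. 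The extension splits linearly as $V=V_1+V_2$, with $V_i=\mathcal{P}_\sigma *v_i$. For $V_1$, since $v_1$ is Schwartz, the results of \cite{CS2,Y,CG,CC} apply verbatim and give smoothness of $V_1$ up to the flat boundary, the vanishing of $\lim_{t\to 0}t^b\pa_t\Delta_b^k V_1$ for $k<m$, and the identity $(-1)^{m+1}\lim_{t\to 0}t^b\pa_t\Delta_b^m V_1=N_{n,\sigma}(-\Delta)^\sigma v_1(x)$.

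The core of the argument is therefore to handle the tail piece $V_2$. The key observation is that for $|x|\le 1$ and $|y|\ge 3/2$ one has $|x-y|\ge 1/2$, so the only singular behaviour of $\mathcal{P}_\sigma$ at $t=0$ is captured by the explicit $t^{2\sigma}$ factor. Setting $s=t^2$, one writes
\[
V_2(x,t)=\beta(n,\sigma)\,t^{2\sigma}F(x,t^2),\qquad F(x,s):=\int_{\{|y|\ge 3/2\}}\frac{v_2(y)}{(|x-y|^2+s)^{(n+2\sigma)/2}}\,\ud y.
\]
All derivatives in $(x,s)$ of the kernel are bounded on $\overline{B_1}\times[0,1]$ by $C(1+|y|)^{-(n+2\sigma)}$, which is integrable against $|v_2|$ by the $k=0$ case of \eqref{eq:cut-offassumption}; hence $F\in C^\infty(\overline{B_1}\times[0,1])$.

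A direct computation using $b=1+2m-2\sigma$ gives
\[
\Delta_b\bigl(t^{2\sigma}F(x,t^2)\bigr)=4m\sigma\,t^{2\sigma-2}F(x,t^2)+t^{2\sigma}G(x,t^2)
\]
for a smooth $G$ built from $(x,s)$-derivatives of $F$. Iterating, one shows by induction that for $0\le k\le m$,
\[
\Delta_b^kV_2=\sum_{j=0}^{k}c_{k,j}(\sigma,m)\,t^{2\sigma-2j}H_{k,j}(x,t^2),\qquad H_{k,j}\in C^\infty,
\]
and the coefficient of the most singular power $t^{2\sigma-2(m+1)}$ appearing in $\Delta_b^{m+1}V_2$ contains the factor $(2m)(2m-2)\cdots(2m-2m)=0$, reflecting the fact $\Delta_b^{m+1}\mathcal{P}_\sigma=0$ off the origin. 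Applying $t^b\pa_t$ to a generic summand $t^{2\sigma-2j}H_{k,j}$ produces $t^{2m-2j}$ times a bounded smooth function, plus higher powers of $t$. For $k\le m-1$ only $j\le k\le m-1$ appears, so $2m-2j\ge 2$ and $\lim_{t\to 0}t^b\pa_t\Delta_b^kV_2=0$; for $k=m$ only the $j=m$ term survives at $t=0$, producing an explicit finite limit of the form $c(\sigma,m)\,\beta(n,\sigma)\int|x-y|^{-(n+2\sigma)}v_2(y)\,\ud y$, which is smooth in $x\in B_1$. H\"older continuity of $t^b\pa_t\Delta_b^kV_2$ up to $\overline{\B_1^+}$ follows from the same $t$-expansion, with H\"older exponent at least $\min(2\sigma-2m,1)$.

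It remains to match the constant in the $k=m$ limit with $(-1)^{m+1}N_{n,\sigma}(-\Delta)^\sigma v_2(x)$. Since $v_2\equiv 0$ on $B_1$, the nonlocal operator $(-\Delta)^\sigma v_2$ reduces there to an explicit convergent integral with kernel $|x-y|^{-(n+2\sigma)}$ (the $k\ge 1$ cases of \eqref{eq:cut-offassumption} ensure the integration-by-parts formula makes sense for $\sigma\ge 1$). Rather than recompute the universal combinatorial constant from scratch, one fixes it by approximating $v_2$ by Schwartz functions $v_2^{(j)}$ supported in $\{|y|\ge 3/2\}$ and converging in the weighted norm induced by \eqref{eq:cut-offassumption}; the classical theory already applies to each $v_2^{(j)}$, and both sides pass stably to the limit by the uniform integral bounds in Step~3. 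Adding the contributions of $V_1$ and $V_2$ and using $(-\Delta)^\sigma v=(-\Delta)^\sigma v_1+(-\Delta)^\sigma v_2$ finishes the proof. The main technical obstacle is the bookkeeping of the $t$-expansion of $\Delta_b^kV_2$ through the key algebraic cancellation $L_t^{m+1}(t^{2\sigma})=0$ (with $L_t=\pa_{tt}+(b/t)\pa_t$); the density argument for the Neumann constant is what keeps this clean instead of combinatorial.
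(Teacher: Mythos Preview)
Your approach is correct and shares the same cut-off decomposition $v=v_1+v_2$ as the paper, but the technical execution differs in two places. For the tail $V_2$, you exploit the representation $V_2=t^{2\sigma}F(x,t^2)$ with $F\in C^\infty(\overline{B_1}\times[0,1])$ and expand $\Delta_b^kV_2$ inductively; the paper instead records only the cruder bounds $|\nabla_t^\ell\nabla_x^kV_2|\le Ct^{2\sigma-\ell}$ and then runs a separate large-$R$ truncation $\eta_R$ to verify \eqref{eq:extensionboundaryzero}--\eqref{eq:extensionneumann}. Your expansion is tighter and yields both the H\"older regularity and the explicit boundary limit in one stroke, with the algebraic cancellation $(2m-2m)=0$ cleanly explaining why no new singular powers arise past $j=m$. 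For the inner piece $V_1$, you defer to the Schwartz-class theory of \cite{CS2,Y,CG,CC}, whereas the paper redoes the boundary regularity by subtracting the Taylor polynomial $T_{k,x}$ of $v$ at $x$; your citation is legitimate for the identities, though the H\"older continuity of $t^b\pa_t\Delta_b^mV_1$ up to $\overline{\B_1^+}$ (as opposed to the mere existence of the limit) is not always stated verbatim in those references and would benefit from one explicit line, e.g.\ via the Fourier representation for Schwartz data. Finally, your density argument for matching the Neumann constant is a clean alternative to the paper's $R\to\infty$ truncation; both ultimately rest on the same weighted integrability \eqref{eq:cut-offassumption}.
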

\begin{proof}
Let $\eta$ be a nonnegative smooth cut-off function such that $\eta\equiv 1$ in $[-1,1]$ and is identically zero outside of $[-3/2,3/2]$. Let $(x,t)\in\B_1^+$. Then
\begin{align*}
 V(x,t)&= \beta(n,\sigma) \int_{\R^n} \frac{t^{2\sigma}}{(|x-y|^2+t^2)^{\frac{n+2\sigma}{2}}}\eta(|y|)v(y)\,\ud y\\
&\quad+  \beta(n,\sigma) \int_{\R^n} \frac{t^{2\sigma}}{(|x-y|^2+t^2)^{\frac{n+2\sigma}{2}}}(1-\eta(|y|))v(y)\,\ud y\\
&\quad=:V_1+V_2.
\end{align*}
We have for all $k=1,2,\cdots,2m$ that 
\begin{align*}
\nabla^k_x V_1(x,t)&=\beta(n,\sigma) \int_{\R^n}\eta(|y|)v(y)\nabla_x^k \frac{t^{2\sigma}}{(|x-y|^2+t^2)^{\frac{n+2\sigma}{2}}}\,\ud y\\
&=\beta(n,\sigma) \int_{\R^n} \frac{t^{2\sigma}}{(|x-y|^2+t^2)^{\frac{n+2\sigma}{2}}}\nabla_y^k (\eta(|y|)v(y))\,\ud y.
\end{align*}
Denote $\widetilde \Delta_t=\partial_{tt}+\frac{b}{t}\partial_t$. Since
\begin{align*}
 V_1(x,t)&= \beta(n,\sigma) \int_{\R^n} \frac{t^{2\sigma}(\eta(|y|)v(y)-T_{k,x}(y))}{(|x-y|^2+t^2)^{\frac{n+2\sigma}{2}}}\,\ud y + \beta(n,\sigma) \int_{\R^n} \frac{t^{2\sigma}T_{k,x}(y)}{(|x-y|^2+t^2)^{\frac{n+2\sigma}{2}}}\,\ud y\\
 &= \beta(n,\sigma) \int_{\R^n} \frac{t^{2\sigma}(\eta(|y|)v(y)-T_{k,x}(y))}{(|x-y|^2+t^2)^{\frac{n+2\sigma}{2}}}\,\ud y+ \sum_{j=0}^{2k} c_j t^j,
\end{align*}
where $T_{k,x}(y)$ is the Taylor expansion of $v$ at $x$ up to order $2k$.
Then
\begin{align*}
\widetilde \Delta_t^k V_1(x,t)&= \beta(n,\sigma) \int_{\R^n}(v(y)-T_{k,x}(y))\Delta_t^k \frac{t^{2\sigma}}{(|x-y|^2+t^2)^{\frac{n+2\sigma}{2}}}\,\ud y+\tilde c_{2k}
\end{align*}
and
\begin{align*}
&\int_{\R^n}(\eta(|y|)v(y)-T_{k,x}(y))\Delta_t^k \frac{t^{2\sigma}}{(|x-y|^2+t^2)^{\frac{n+2\sigma}{2}}}\,\ud y\\
& \le C \int_{B_\va(x)}|y-x|^{2k+\delta} \frac{t^{2(\sigma-k)}}{(|x-y|^2+t^2)^{\frac{n+2\sigma}{2}}}\,\ud y\\
&\quad+ C t^{2(\sigma-k)}\int_{\R^n\setminus B_\va(x)}\frac{|\eta(|y|)v(y)-T_{k,x}(y)|}{|x-y|^{n+2\sigma}}\,\ud y,
\end{align*}
where $\va$ is some small number. Also, we observe that
\begin{align*}
|\nabla^k_\ell \nabla^k_x V_2(x,t)|\le Ct^{2\sigma-\ell},
\end{align*}
where we used that \eqref{eq:cut-offassumption}.

This shows that $\Delta^k_x V(x,t)$ and $\widetilde \Delta_t^k V(x,t)$ are both H\"older continuous on $\overline{\mathcal{B}_{r_3}^+\setminus \mathcal B_{r_2}^+} $. Using the assumption \eqref{eq:cut-offassumption} and similar arguments, one can show show that $\Delta_b^k V(x,t)$ is H\"older  continuous on $\overline{\mathcal{B}_{r_3}^+\setminus \mathcal B_{r_2}^+} $ for all $k=0,1,2,\cdots,m$, and so is $t^b\pa_\nu \Delta_b^m V $. 

Let $\eta_R(s)=\eta(s/R)$, $R>0$. Write
\begin{align*}
 V(x,t)&= \beta(n,\sigma) \int_{\R^n} \frac{t^{2\sigma}}{(|x-y|^2+t^2)^{\frac{n+2\sigma}{2}}}\eta_R(|y|)v(y)\,\ud y\\
&\quad+  \beta(n,\sigma) \int_{\R^n} \frac{t^{2\sigma}}{(|x-y|^2+t^2)^{\frac{n+2\sigma}{2}}}(1-\eta_R(|y|))v(y)\,\ud y\\
&\quad=:V_{1,R}+V_{2,R}.
\end{align*}
Since
\[
(-1)^{m+1}\lim_{t\to 0} t^{b} \pa_t \Delta_b^m V_{1,R}(x,t)= N_{n,\sigma} (-\Delta)^\sigma (\eta_R(|x|)v(x)),
\]
to verify \eqref{eq:extensionneumann} for $V$ in $B_1$, we only need to show that
\begin{align}
\lim_{R\to\infty}(-1)^{m+1}\lim_{t\to 0} t^{b} \pa_t \Delta_b^m V_{2,R}(x,t)&= 0,\label{eq:integraltailzero}\\
\lim_{R\to\infty} (-\Delta)^\sigma \Big((1-\eta_R(|x|))v(x)\Big)&=0.\label{eq:tailzeroharmonic}
\end{align}
The identity \eqref{eq:integraltailzero} follows from the observation that
\[
\lim_{t\to 0} |t^{b} \pa_t \Delta_b^m V_{2,R}(x,t)|\le C\int_{\{|y|\ge R\}} \frac{v(y)}{|y|^{n+2\sigma}}\,\ud y\to 0\quad\mbox{as }R\to\infty.
\]
The identity \eqref{eq:tailzeroharmonic} follows from the estimate that
\begin{align*}
 &(-\Delta)^\sigma \Big((1-\eta_R(|x|))v(x)\Big)\\
 &= (-\Delta)^{\sigma-m} (-\Delta)^m \Big((1-\eta_R(|x|))v(x)\Big)\\
 &=C\int_{\{|y|>R\}}\frac{|(-\Delta)^mu(y)|}{|x-y|^{n+2(\sigma-m)}}\,\ud y+C\sum_{k=0}^{2m-1}\int_{\{R<|y|<2R\}}\frac{|\nabla^{2m-k}(1-\eta_R)(y)\nabla^k v(y)|}{|x-y|^{n+2(\sigma-m)}}\,\ud y\\
 &\le C\int_{\{|y|>R\}}\frac{|(-\Delta)^mu(y)|}{1+|y|^{n+2(\sigma-m)}}\,\ud y+C\sum_{k=0}^{2m-1}\int_{\{R<|y|<2R\}}\frac{|\nabla^k v(y)|}{R^{2m-k}|y|^{n+2(\sigma-m)}}\,\ud y\\
 &\le C\sum_{k=0}^{2m}\int_{\{|y|>R\}}\frac{|\nabla^k u(y)|}{1+|y|^{n+2\sigma-k}}\,\ud y\to 0\quad\mbox{as }R\to\infty.
\end{align*}
This verifies \eqref{eq:extensionneumann} for $V$. The verification of \eqref{eq:extensionboundaryzero} for $V$ can be done similarly.
\end{proof}

Our solution $v$ defined in \eqref{eq:definitionofv} satisfies \eqref{eq:cut-offassumption} since $h$ is assumed to satisfy \eqref{eq:assumptionsh}. However, our $v$ may have a singularity at the origin. Nevertheless, from Lemma \ref{lem:estimates}, $|\nabla^k u(x)| =O(|x|^{-\frac{n-2\sigma}{2}-k})$ for $x \in B_1\setminus \{0\}$. This is actually enough to ensure that \eqref{eq:extensionneumann} and \eqref{eq:extensionboundaryzero} hold, using another cut-off as follows.

\begin{prop}\label{lem:cut-off2}
Let $v\in W^{2m,1}_{loc}(\R^n\setminus\{0\}) \cap C^{\infty}(B_2\setminus\{0\})$ be a positive function in $\R^n\setminus\{0\}$ satisfying \eqref{eq:cut-offassumption}, where $m=[\sigma]$. Suppose in addition that 
\begin{align}\label{eq:cut-offassumption0}
\int_{B_1}\frac{|\nabla^kv(y)|}{|y|^{2m-k}}\,\ud y<\infty, \quad k=0,1,\cdots, 2m. 
\end{align}
Let $V$ be as in \eqref{eq:poissonextension}. Then the left hand sides of \eqref{eq:extensionboundaryzero} and \eqref{eq:extensionneumann} are H\"older continuous functions in $\overline{\B_1^+\setminus \B^+_{1/2}}$, and both \eqref{eq:extensionboundaryzero} and \eqref{eq:extensionneumann} hold for all $x\in B_1\setminus B_{1/2}$.
\end{prop}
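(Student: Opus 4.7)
The plan is to reproduce the strategy of Proposition \ref{lem:cut-off1}, but with the cut-off placed at the singularity $\{0\}$ instead of at infinity. First, I would pick $\eta \in C^\infty(\R^n)$ with $\eta \equiv 0$ on $B_{1/8}$ and $\eta \equiv 1$ on $\R^n \setminus B_{1/4}$, and split $v = v_1 + v_2$ with $v_1 := \eta v$ and $v_2 := (1-\eta) v$. Since $v_1$ vanishes on $B_{1/8}$ and coincides with $v$ outside $B_{1/4}$, it lies in $C^\infty(B_2) \cap W^{2m,1}_{loc}(\R^n)$ and inherits the decay condition \eqref{eq:cut-offassumption} from $v$. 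Consequently Proposition \ref{lem:cut-off1} applies to $v_1$ and delivers \eqref{eq:extensionboundaryzero}, \eqref{eq:extensionneumann} for $V_1 := \mathcal{P}_\sigma * v_1$ on $B_1$, together with H\"older continuity of the relevant boundary traces on all of $\overline{\B_1^+}$, and in particular on $\overline{\B_1^+\setminus\B_{1/2}^+}$.

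The real work is then to analyze $V_2 := \mathcal{P}_\sigma * v_2$ on $\overline{\B_1^+\setminus\B_{1/2}^+}$. The hypothesis \eqref{eq:cut-offassumption0} with $k=0$ yields $v_2 \in L^1(\R^n)$, and $\mathrm{supp}\, v_2 \subset \overline{B_{1/4}}$. A direct geometric check shows that for every $(x,t) \in \overline{\B_1^+\setminus\B_{1/2}^+}$ and every $y \in \overline{B_{1/4}}$ one has $|x-y|^2 + t^2 \ge c > 0$ for a universal constant $c$, so the Poisson kernel $\mathcal{P}_\sigma(x-y,t) = \beta(n,\sigma)\,t^{2\sigma}(|x-y|^2+t^2)^{-(n+2\sigma)/2}$ and all its $(x,t)$-derivatives are bounded uniformly in $y$. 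Differentiating under the integral, I would write $V_2(x,t) = t^{2\sigma} G(x,t)$ where $G$ is smooth on $\overline{\B_1^+\setminus\B_{1/2}^+}$ and even in $t$. Using the identity
\[
\Delta_b\bigl(t^p H(x,t^2)\bigr) = p(p-1+b)\,t^{p-2} H + t^p\bigl(\Delta_x H + (4p+2+2b)\partial_s H + 4t^2 \partial_s^2 H\bigr),
\]
an induction on $k$ then yields $\Delta_b^k V_2(x,t) = t^{2\sigma-2k} G_k(x,t)$ with $G_k$ smooth up to $t=0$ and even in $t$. Since $b + 2\sigma - 2k - 1 = 2(m-k)$, one gets
\[
t^b\partial_t\Delta_b^k V_2(x,t) = (2\sigma-2k)\,t^{2(m-k)} G_k(x,t) + t^{2(m-k)+1}\partial_t G_k(x,t),
\]
which is smooth on $\overline{\B_1^+\setminus\B_{1/2}^+}$ and vanishes at $t=0$ for $k=0,\dots,m-1$. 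This already proves the H\"older continuity claim and \eqref{eq:extensionboundaryzero} for $V_2$.

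To identify the trace at $k=m$ with $N_{n,\sigma}(-\Delta)^\sigma v_2$ on $B_1 \setminus B_{1/2}$, I would approximate $v_2$ by $v_2^\varepsilon \in C_c^\infty(\R^n)$ with $\mathrm{supp}\, v_2^\varepsilon \subset \overline{B_{1/4}}$ and $v_2^\varepsilon \to v_2$ in $L^1(\R^n)$ (obtained by truncating $v_2$ away from the origin and mollifying). Proposition \ref{lem:cut-off1} gives \eqref{eq:extensionneumann} for each $V_2^\varepsilon := \mathcal{P}_\sigma * v_2^\varepsilon$; simultaneously, for $x \in B_1\setminus B_{1/2}$ the right-hand sides have the integral representation $(-\Delta)^\sigma v_2^\varepsilon(x) = -C_{n,\sigma}\int|x-y|^{-(n+2\sigma)} v_2^\varepsilon(y)\,\ud y$ (valid because $x \notin \mathrm{supp}\, v_2^\varepsilon$), and these converge uniformly in $x$ to the analogous integral against $v_2$. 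The hard part will be ensuring that the boundary limits $\lim_{t\to 0^+} t^b\partial_t\Delta_b^m V_2^\varepsilon$ pass to the $\varepsilon\to 0$ limit uniformly in $x$; this follows from the uniform $C^\infty$ bounds on $\mathcal{P}_\sigma(x-y,t)$ on the compact set $\overline{\B_1^+\setminus\B_{1/2}^+}\times \overline{B_{1/4}}$ (which is the crux of the whole argument), allowing the two limits to be exchanged. Summing the results for $V_1$ and $V_2$ then yields the proposition.
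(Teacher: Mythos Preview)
Your proposal is correct and follows a route close to, but not identical to, the paper's. Both arguments split $v$ by a cutoff near the origin, apply Proposition~\ref{lem:cut-off1} to the regular piece, and handle the singular piece separately. The paper uses a \emph{shrinking} family of cutoffs $\eta_\varepsilon(s)=\eta(s/\varepsilon)$: for each $\varepsilon$ it applies Proposition~\ref{lem:cut-off1} to $(1-\eta_\varepsilon)v$, and then shows directly that both the extension trace $\lim_{t\to 0}t^b\partial_t\Delta_b^m\bigl(\mathcal{P}_\sigma*(\eta_\varepsilon v)\bigr)$ and $(-\Delta)^\sigma(\eta_\varepsilon v)$ vanish on $B_1\setminus B_{1/2}$ as $\varepsilon\to 0$; the latter estimate expands $(-\Delta)^m(\eta_\varepsilon v)$ by the Leibniz rule and uses all cases $k=0,\dots,2m$ of hypothesis~\eqref{eq:cut-offassumption0}. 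You instead fix the cutoff scale once, observe that $V_2=\mathcal{P}_\sigma*v_2$ has the explicit structure $t^{2\sigma}G$ with $G$ smooth on $\overline{\B_1^+\setminus\B_{1/2}^+}$ (because the Poisson kernel is smooth when $|x-y|^2+t^2$ is bounded below), and read off H\"older continuity and \eqref{eq:extensionboundaryzero} from an inductive formula for $\Delta_b^k(t^{2\sigma}G)$; to identify the Neumann trace you approximate $v_2$ in $L^1$ by $C_c^\infty$ functions. Your approach is more explicit for the regularity part and in fact uses only the $k=0$ case of \eqref{eq:cut-offassumption0}; the paper's approach is shorter for the Neumann identification but needs the full hypothesis. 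One cosmetic point: Proposition~\ref{lem:cut-off1} is stated for strictly positive $v$, whereas your $v_1$ and $v_2^\varepsilon$ are only nonnegative, but positivity plays no role in its proof (and for $v_2^\varepsilon\in C_c^\infty$ the extension identities are in any case the original Caffarelli--Silvestre/Yang result).
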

\begin{proof}
The proof for that the left hand sides of \eqref{eq:extensionboundaryzero} and \eqref{eq:extensionneumann} are H\"older continuous functions in $\overline{\B_1^+\setminus \B^+_{1/2}}$ is almost identical to that in Proposition \ref{lem:cut-off1}, replacing the cut-off $\eta$ by the new cut-off function $\tilde\eta$ which is equal to $1$ in $[1/4,5/4]$ and is identically zero outside of $[1/8,3/2]$.

To prove that both \eqref{eq:extensionboundaryzero} and \eqref{eq:extensionneumann} hold for all $x\in B_1\setminus B_{1/2}$, we use the cut-off function $\eta_\va(s)=\eta(s/\va)$, where $\va>0$ small, and $\eta$ is the one in the proof of  Proposition \ref{lem:cut-off1}. To verify \eqref{eq:extensionneumann} for $V$ in $B_1\setminus B_{1/2}$, using Proposition \ref{lem:cut-off1}, we only need to show that
\begin{align}
\lim_{\va\to 0}(-1)^{m+1}\lim_{t\to 0} t^{b} \pa_t \Delta_b^m \widetilde V_{\va}(x,t)&= 0,\label{eq:integraltailzero2}\\
\lim_{\va\to0} (-\Delta)^\sigma \Big(\eta_\va(|x|)v(x)\Big)&=0.\label{eq:tailzeroharmonic2}
\end{align}
where
\[
\widetilde V_\va=\beta(n,\sigma) \int_{\R^n} \frac{t^{2\sigma}}{(|x-y|^2+t^2)^{\frac{n+2\sigma}{2}}}\eta_\va(|y|)v(y)\,\ud y.
\]

For \eqref{eq:integraltailzero2}, we observe that 
\[
|(-1)^{m+1}\lim_{t\to 0} t^{b} \pa_t \Delta_b^m \widetilde V_{\va}(x,t)|\le C \int_{\{|y|\le \va\}} v(y)\,\ud y\to 0\quad\mbox{as }\va\to 0^+.
\]
For \eqref{eq:tailzeroharmonic2}, similar to the proof of \eqref{eq:tailzeroharmonic}, we have
\begin{align*}
(-\Delta)^\sigma \Big(\eta_\va(|x|)v(x)\Big)&\le \int_{|y|\le 2\va} |(-\Delta)^m v(y)|\,\ud y + C\sum_{k=0}^{2m-1} \int_{\va/8\le  |y|\le 2\va}\va^{k-2m} |\nabla^k v(y)|\,\ud y \\
&\le C\sum_{k=0}^{2m} \int_{|y|\le 2\va} |y|^{k-2m} |\nabla^k v(y)|\,\ud y\to 0\quad\mbox{as }\va\to 0^+.
\end{align*}

This finishes the proof of this lemma.
\end{proof}

\section{Some technical estimates for the non-integer cases}\label{appendix:technical}

This appendix is for the proof of the lower bound in Proposition \ref{prop:zero} when $\sigma$ is not an integer. The idea is similar to the case that $\sigma$ is an integer. But we need to work in the framework of the extension \eqref{eq:poissonextension}, and therefore, we need to establish all the needed ingredients for the extension $V$. For $\sigma\in (0,1)$, this analysis has been carried out in \cite{CJSX}. So we will focus on the case $\sigma>1$.

 Recall $v$ is defined in \eqref{eq:definitionofv} and  is a solution of
\be \label{eq:v}
(-\Delta)^\sigma v= c(n,\sigma) v^{\frac{n+2\sigma}{n-2\sigma}} \quad \mbox{in }B_2\setminus\{0\}, \quad v>0\quad\mbox{in }\R^n.
\ee
Define
\begin{equation}\label{eq:pohozaevnoninteger}
\begin{split}
P(v,r)&=\frac{n-2\sigma}{2} \int_{\pa'' \B_r^+} t^b\pa_\nu \Delta_b^m V V +r\int_{\pa'' \B_r^+} t^b\pa_\nu \Delta_b^m V \partial_\nu V \\
&\quad+\int_{\pa'' \B_r^+} Q_m(V) + \frac{(-1)^m(n-2\sigma)N_\sigma c(n,\sigma)}{2n} r\int_{\pa B_r}  v^{\frac{2n}{n-2\sigma}}.
\end{split}
\end{equation}
where $V$ is the Poisson extension of $v$ defined in \eqref{eq:poissonextension}. Then 
\begin{prop}\label{prop:pohozaev22}
Let $u$ and $h$ be as in Proposition \ref{prop:zero}, and $v$ be as in \eqref{eq:definitionofv}, and $P(v,r)$ be as in \eqref{eq:pohozaevnoninteger}. Then $P(v,r)$ is independent of $r$.
\end{prop}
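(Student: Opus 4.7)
The plan is to apply the Pohozaev-type identity from Proposition \ref{prop:GPYsigma} to the Poisson extension $V$ on the annular half-ball $E = \B_R^+ \setminus \overline{\B_r^+}$ for $0 < r < R < 1$, and show the result is $r$-independent by converting everything into boundary contributions on $\partial'' \B_r^+$ and $\partial'' \B_R^+$ together with the nonlinear bulk term on $\partial B_r, \partial B_R$.

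First, I would justify regularity. The function $v$ is smooth in $B_2 \setminus \{0\}$ and satisfies \eqref{eq:assumptionsh} together with the local pointwise estimates $|\nabla^k v(x)| \le C|x|^{-\frac{n-2\sigma}{2}-k}$ from Lemma \ref{lem:estimates}; in particular $v$ satisfies \eqref{eq:cut-offassumption} and \eqref{eq:cut-offassumption0}. Thus Proposition \ref{lem:cut-off1} and Proposition \ref{lem:cut-off2} (together with interior $t$-smoothness from the explicit Poisson kernel) give that $V \in C^{2m+2}(\overline{E})$ away from $(0,0)$, that $\Delta_b^{m+1}V = 0$ in $\R^{n+1}_+$, that \eqref{eq:extensionboundaryzero} holds on the flat piece $F = \{(x,0) : r < |x| < R\}$, and that $(-1)^{m+1}\lim_{t \to 0} t^b \partial_t \Delta_b^m V = N_{n,\sigma} c(n,\sigma) v^{\frac{n+2\sigma}{n-2\sigma}}$ there. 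This legitimates all the boundary traces appearing in the identity.

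Next, noting $n + b - 1 - 2m = n - 2\sigma$ and that both volume integrals in Proposition \ref{prop:GPYsigma} vanish since $\Delta_b^{m+1}V = 0$ on $E$, the identity reduces to
\[
0 = \tfrac{n-2\sigma}{2}\int_{\partial E} t^b \partial_\nu \Delta_b^m V \cdot V + \int_{\partial E} t^b \partial_\nu \Delta_b^m V\, \langle X, \nabla V\rangle + \int_{\partial E} Q_m(V).
\]
I would split $\partial E$ into $\partial''\B_R^+$, $\partial''\B_r^+$ (with outward normal pointing towards the origin), and $F$. On the two spherical pieces $\langle X, \nabla V\rangle = |X|\,\partial_\nu V$, so the contribution of $\partial''\B_R^+$ minus that of $\partial''\B_r^+$ reproduces the first three terms of $P(v,R) - P(v,r)$.

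The key computation is the flat contribution. On $F$ the outward normal is $\nu = -e_{n+1}$, so $\langle X,\nabla V\rangle|_F = \langle x,\nabla v\rangle$ and $t^b \partial_\nu \Delta_b^m V \to (-1)^m N_{n,\sigma} c(n,\sigma)\, v^{\frac{n+2\sigma}{n-2\sigma}}$ as $t \to 0^+$. A term-by-term inspection of $Q_m$ shows each summand on $F$ either carries a trace of $t^b \partial_t \Delta_b^k V$ with $k < m$ (which vanishes by \eqref{eq:extensionboundaryzero}) or a factor $t^b \Delta_b^j V$ paired with a tangential derivative involving $\partial_t$ which, together with the smoothness of $V$ near $F$, forces the limit to vanish; hence $\int_F Q_m(V) = 0$. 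The two surviving flat integrals therefore equal
\[
(-1)^m N_{n,\sigma}\, c(n,\sigma) \int_{B_R \setminus B_r}\!\! \Big[\tfrac{n-2\sigma}{2}\, v^{\frac{2n}{n-2\sigma}} + v^{\frac{n+2\sigma}{n-2\sigma}} \langle x,\nabla v\rangle\Big]\,\ud x.
\]
Using $v^{\frac{n+2\sigma}{n-2\sigma}}\langle x,\nabla v\rangle = \tfrac{n-2\sigma}{2n}\,\langle x, \nabla v^{\frac{2n}{n-2\sigma}}\rangle$ and the divergence theorem on $B_R \setminus B_r$, the bulk term $\tfrac{n-2\sigma}{2}\int v^{\frac{2n}{n-2\sigma}}$ cancels and one is left with $\tfrac{(-1)^m(n-2\sigma)N_{n,\sigma} c(n,\sigma)}{2n}\big[R\!\int_{\partial B_R} v^{\frac{2n}{n-2\sigma}} - r\!\int_{\partial B_r} v^{\frac{2n}{n-2\sigma}}\big]$, which is precisely the difference of the last terms in $P(v,R)$ and $P(v,r)$. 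Summing all contributions yields $P(v,R) - P(v,r) = 0$.

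The main obstacle I expect is the verification that $\int_F Q_m(V) = 0$: the structural formula for $Q_m$ is built recursively and contains many terms mixing $t^b$ weights with $x$- and $t$-derivatives of various orders. Making the vanishing limits precise on the flat piece requires carefully applying \eqref{eq:extensionboundaryzero} together with the H\"older regularity of $t^b \partial_\nu \Delta_b^k V$ in $\overline{\B_R^+ \setminus \B_r^+}$ supplied by Propositions \ref{lem:cut-off1}–\ref{lem:cut-off2}; every other step is either a direct application of Proposition \ref{prop:GPYsigma} or an elementary divergence-theorem manipulation.
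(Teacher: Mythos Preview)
Your overall strategy---apply Proposition \ref{prop:GPYsigma} to the annular half-ball, use $\Delta_b^{m+1}V=0$ to kill the bulk terms, and reduce the flat contribution to the divergence identity $v^{\frac{n+2\sigma}{n-2\sigma}}\langle x,\nabla v\rangle=\tfrac{n-2\sigma}{2n}\langle x,\nabla v^{\frac{2n}{n-2\sigma}}\rangle$---is exactly the paper's, and your spelled-out computation of the nonlinear boundary term is correct and more detailed than what the paper writes.

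The genuine gap is the regularity claim ``$V\in C^{2m+2}(\overline{E})$ away from $(0,0)$.'' Propositions \ref{lem:cut-off1} and \ref{lem:cut-off2} do \emph{not} give this: they only assert H\"older continuity of the weighted combinations $\Delta_b^k V$ $(k\le m)$ and $t^b\partial_t\Delta_b^m V$ up to $\{t=0\}$. For non-integer $\sigma$ the weight $b=1-2(\sigma-m)\neq 0$, and the extension typically behaves like $V(x,t)=v(x)+c(x)\,t^{2(\sigma-m)}+\cdots$ near $t=0$, so individual derivatives such as $\partial_t^2 V$ or $\partial_t^{2m+2}V$ are \emph{not} continuous up to the flat boundary. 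Consequently Proposition \ref{prop:GPYsigma}, whose hypothesis is $U\in C^{2m+2}(\overline{E})$, cannot be applied directly on $E=\B_R^+\setminus\overline{\B_r^+}$.

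The fix is exactly what the paper does: apply Proposition \ref{prop:GPYsigma} on the truncated domain $E_\varepsilon=(\B_R^+\setminus\overline{\B_r^+})\cap\{t>\varepsilon\}$, where $V$ is genuinely smooth, and then send $\varepsilon\to 0^+$. The boundary of $E_\varepsilon$ now has a flat piece at height $t=\varepsilon$ rather than at $t=0$; the H\"older continuity furnished by Proposition \ref{lem:cut-off2} is precisely what lets you pass to the limit in each term of $Q_m$ and in $t^b\partial_\nu\Delta_b^m V$ on that piece, yielding $\lim_{\varepsilon\to 0^+}\int_{\{t=\varepsilon\}}Q_m(V)=0$ and recovering the Neumann data \eqref{eq:extensionneumann} in the remaining flat terms. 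Once this truncation step is inserted, your argument goes through unchanged.
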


\begin{proof}   
We apply Proposition  \ref{prop:GPYsigma} with $E=(\mathcal B_R^+\setminus \mathcal B_{r}^+)\cap \{(x,t):\ t=\va\}$ and $U=V$. By Proposition \ref{lem:cut-off2}, all the integrands in $Q_m$ are continuous functions, and moreover, 
\[
\lim_{\va\to 0^+} \int_{(\mathcal B_R^+\setminus \mathcal B_{r}^+)\cap \{(x,t):\ t=\va\}} Q_m(V)=0,\quad\forall\ 2>R> r>0.
\]
The conclusion follows by using the equations \eqref{eq:v}, \eqref{eq:extensionharmonic} and \eqref{eq:extensionneumann},  integration by parts, and sending $\va\to 0^+$ in the end.
\end{proof} 
 
We will derive a Harnack inequality for $V$. To achieve this, we first need a Harnack inequality for the tail of the Poisson convolution. 

\begin{lem}\label{lem:poissonharnacktail}
Let $u$ and $h$ be as in Proposition \ref{prop:zero} and  $v$ be as in \eqref{eq:definitionofv}. Then for any $0<r<1/4$ and $3/4\le |x|\le 5/4$, we have 
\begin{equation}\label{eq:harnackpossiontail}
\int_{\{|y|<1/2\} \cup \{|y|>3/2\}} |x-y|^{-(n+2\sigma)} v(ry) \,\ud y \le C v(r x), 
\end{equation}
where $C>0$ is independent of $r$. 
\end{lem}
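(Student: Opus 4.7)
\medskip

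\noindent \textbf{Proof proposal for Lemma \ref{lem:poissonharnacktail}.}

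The plan is to split the integration domain into three parts, $A = \{|y|<1/2\}$, $B = \{3/2 < |y| < 5/2\}$, and $C = \{|y| > 5/2\}$, and estimate each separately. Note that for $3/4 \le |x| \le 5/4$, the distance $|x-y|$ is bounded below by $1/4$ throughout all three regions, so $|x-y|^{-(n+2\sigma)}$ is a bounded factor on $A$ and $B$, while on $C$ we have the cleaner bound $|x-y|^{-(n+2\sigma)} \le C|y|^{-(n+2\sigma)}$ (valid since $|y| \ge 5/2 \ge 2|x|$).

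For region $B$, the points $ry$ lie in the annulus $\{3r/2 \le |w| \le 5r/2\}$, of the same scale as $|rx| \sim r$. Iterating Lemma \ref{lem:harnack} along a bounded chain of overlapping dyadic annuli yields $v(ry) = u(ry) \le C\, v(rx)$ uniformly for $y \in B$, and so $\int_B \le C v(rx)$. For region $C$, I would split further into $C_1 = \{5/2 < |y| < 2/r\}$ and $C_2 = \{|y| > 2/r\}$. On $C_2$, the point $ry$ lies outside $B_2$, so $v(ry)$ is given directly by the integral formula \eqref{eq:definitionofv}, and a direct change of variables gives $\int_{C_2} |y|^{-(n+2\sigma)} v(ry)\,\ud y = r^{2\sigma} \int_{|z|>2} |z|^{-(n+2\sigma)} v(z)\,\ud z$. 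Since $u^p \in L^1(B_2)$ yields $v(z) \le C|z|^{2\sigma-n} + h(z)$ for $|z|$ large, the hypothesis \eqref{eq:assumptionsh} on $h$ makes this integral finite, and one obtains the bound $Cr^{2\sigma}$, which is controlled by $v(rx) \ge c_0>0$ (from the universal lower bound \eqref{eq:pointwiselowerbound}).

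The delicate pieces are $A$ and $C_1$, where $ry$ approaches the singular point $0$ or $\partial B_2$ respectively. On $A$, after a change of variable $w = ry$, the task reduces to showing $\int_{|w|<r/2} v(w)\,\ud w \le C r^n v(rx)$. On $C_1$ it is analogous but with a decay weight. In both cases I would combine the pointwise upper bound $u(w) \le C|w|^{-(n-2\sigma)/2}$ from Proposition \ref{prop:upbound} with a matching lower bound for $v(rx)$ coming from the integral equation: restricting the representation \eqref{eq:definitionofv} to $z \in B_{2r}$ and using $|rx - z| \le 4r$ there gives
\[
v(rx) \ge C r^{2\sigma - n} \int_{B_{2r}} u(z)^{\frac{n+2\sigma}{n-2\sigma}}\,\ud z.
\]
Combined with H\"older's inequality to relate $\int_{B_{r/2}} u$ to $\int_{B_{2r}} u^{(n+2\sigma)/(n-2\sigma)}$, together with the Harnack chain of Lemma \ref{lem:harnack} to align scales, this should yield the desired bound $\int_{|w|<r/2} v(w)\,\ud w \le C r^n v(rx)$, and similarly for $C_1$.

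The main obstacle is precisely this inner region $A$ (and $C_1$): the pointwise upper bound $u(w) \le C|w|^{-(n-2\sigma)/2}$ by itself yields an estimate of the order $r^{-(n-2\sigma)/2}$ that need not be dominated by $v(rx)$ when $v(rx)$ is only of constant size (for instance near a removable singularity). The key is therefore to exploit the integral equation for $v(rx)$, which produces a lower bound of the correct scale tied to the local $L^{(n+2\sigma)/(n-2\sigma)}$-mass of $u$, so that the upper estimate on $\int_A$ and the lower estimate on $v(rx)$ scale compatibly in $r$. Once this matching is carried out, the lemma follows by summing the three contributions.
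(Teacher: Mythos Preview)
Your treatment of regions $B$ and $C_2$ is sound, but the argument for $A$ (and by the same token $C_1$) has a genuine gap. Carry out your H\"older step: with $p=\frac{n+2\sigma}{n-2\sigma}$ and $I:=\int_{B_{2r}}u^{p}$, H\"older gives $\int_{B_{r/2}}u\le C r^{\frac{4\sigma n}{n+2\sigma}}I^{1/p}$, while your lower bound gives $r^{n}v(rx)\ge C r^{2\sigma}I$. The inequality $r^{\frac{4\sigma n}{n+2\sigma}}I^{1/p}\le C r^{2\sigma}I$ rearranges to $I\ge c\,r^{(n-2\sigma)/2}$, which is \emph{not} available here: under the hypotheses of Proposition~\ref{prop:zero} one has $\liminf_{x\to 0}|x|^{(n-2\sigma)/2}u(x)=0$, so along a subsequence $I$ is $o(r^{(n-2\sigma)/2})$. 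Falling back on the universal lower bound $v(rx)\ge c_0$ does not help either, since the pointwise upper bound on $u$ only gives $\int_{B_{r/2}}u\le C r^{(n+2\sigma)/2}$, and $(n+2\sigma)/2<n$. So neither branch closes, and the ``Harnack chain to align scales'' does not rescue the argument: Harnack compares $u$ on a single annulus, it does not sum dyadic contributions.

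The missing idea, and what the paper does, is to exploit the integral equation for the \emph{integrand} $v(ry)$ rather than only a lower bound for $v(rx)$. In rescaled variables $w_r(y)=r^{(n-2\sigma)/2}v(ry)$ one has $w_r(y)=\int_{B_{2/r}}|y-z|^{2\sigma-n}w_r(z)^{p}\,\ud z+h_r(y)$; substituting this into $\int_{\{|y|<1/2\}\cup\{|y|>3/2\}}|x-y|^{-(n+2\sigma)}w_r(y)\,\ud y$ and applying Fubini, the $y$-integral becomes a bounded kernel (since $|x-y|^{-(n+2\sigma)}$ is integrable at infinity and bounded near $x$), and the remaining $z$-integral is termwise dominated by $\int_{B_{2/r}}|x-z|^{2\sigma-n}w_r(z)^{p}\,\ud z\le w_r(x)$. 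This substitution--Fubini step replaces your H\"older bound and closes the estimate for $A$ and $C_1$ without any lower bound hypothesis on $I$.
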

\begin{proof}
 since
\begin{align*}
w_r(x)=\int_{B_{r/2}}\frac{w_r(z)^{\frac{n+2\sigma}{n-2\sigma}}}{|x-z|^{n-2\sigma}}\,\ud z+ h_r(x),
\end{align*}
where $w_r(x)=r^{\frac{n-2\sigma}{2}}v(rx)$ and $h_r(x)=r^{\frac{n-2\sigma}{2}}h(rx)$, we have
\begin{align*}
&\int_{\{|y|<1/2\} \cup \{|y|>3/2\}} |x-y|^{-(n+2\sigma)} w_r(y) \,\ud y\\
&=\int_{\{|y|<1/2\} \cup \{|y|>3/2\}} |x-y|^{-(n+2\sigma)} \int_{B_{r/2}}\frac{w_r(z)^{\frac{n+2\sigma}{n-2\sigma}}}{|y-z|^{n-2\sigma}}\,\ud z \,\ud y\\
&\quad+\int_{\{|y|<1/2\} \cup \{|y|>3/2\}} |x-y|^{-(n+2\sigma)} h_r(y) \,\ud y.
\end{align*}
Note that
\begin{align*}
\int_{\{|y|<1/2\} \cup \{|y|>3/2\}} |x-y|^{-(n+2\sigma)} \int_{|z|\le 1/2}\frac{w_r(z)^{\frac{n+2\sigma}{n-2\sigma}}}{|y-z|^{n-2\sigma}}\,\ud z \,\ud y&\le C \int_{|z|\le 1/2}w_r(z)^{\frac{n+2\sigma}{n-2\sigma}}\,\ud z\\
&\le C\int_{|z|\le 1/2}\frac{w_r(z)^{\frac{n+2\sigma}{n-2\sigma}}}{|x-z|^{n-2\sigma}}\,\ud z\\
&\le Cw_r(x),
\end{align*}
where we used Fubini's theorem and the positivity of $h$. And
\begin{align*}
&\int_{\{|y|<1/2\} \cup \{|y|>3/2\}} |x-y|^{-(n+2\sigma)} \int_{1/2\le |z|\le 3/2}\frac{w_r(z)^{\frac{n+2\sigma}{n-2\sigma}}}{|y-z|^{n-2\sigma}}\,\ud z \,\ud y\\
&\quad\le C w_r(x)\int_{\{|y|<1/2\} \cup \{|y|>3/2\}} |x-y|^{-(n+2\sigma)}\int_{1/2\le |z|\le 3/2}\frac{1}{|y-z|^{n-2\sigma}}\,\ud z \,\ud y\\
&\quad\le C w_r(x),
\end{align*}
where we used Proposition \ref{prop:upbound} and Lemma \ref{lem:harnack} in the second inequality. Finally,
\begin{align*}
&\int_{\{|y|<1/2\}} |x-y|^{-(n+2\sigma)} \int_{3/2\le |z|\le r/2}\frac{w_r(z)^{\frac{n+2\sigma}{n-2\sigma}}}{|y-z|^{n-2\sigma}}\,\ud z \,\ud y\\
&\quad\le C \int_{\{|y|<1/2\}}\int_{3/2\le |z|\le r/2}\frac{w_r(z)^{\frac{n+2\sigma}{n-2\sigma}}}{|x-z|^{n-2\sigma}}\,\ud z \,\ud y\\
&\quad\le Cw_r(x),
\end{align*}
and
\begin{align*}
&\int_{\{|y|>3/2\}} |x-y|^{-(n+2\sigma)} \int_{3/2\le |z|\le r/2}\frac{w_r(z)^{\frac{n+2\sigma}{n-2\sigma}}}{|y-z|^{n-2\sigma}}\,\ud z \,\ud y\\
&\quad\le \int_{3/2\le |z|\le r/2} w_r(z)^{\frac{n+2\sigma}{n-2\sigma}}\int_{\{|y|>3/2\}} |y|^{-(n+2\sigma)} \frac{1}{|y-z|^{n-2\sigma}}\,\ud y\,\ud z\\
&\quad\le C \int_{3/2\le |z|\le r/2}\frac{w_r(z)^{\frac{n+2\sigma}{n-2\sigma}}}{|z|^{n-2\sigma}}\,\ud z\\
&\quad\le C \int_{3/2\le |z|\le r/2}\frac{w_r(z)^{\frac{n+2\sigma}{n-2\sigma}}}{|x-z|^{n-2\sigma}}\,\ud z\\
&\quad\le Cw_r(x).
\end{align*}
Therefore,
\[
\int_{\{|y|<1/2\} \cup \{|y|>3/2\}} |x-y|^{-(n+2\sigma)} \int_{B_{r/2}}\frac{w_r(z)^{\frac{n+2\sigma}{n-2\sigma}}}{|y-z|^{n-2\sigma}}\,\ud z \,\ud y\le Cw_r(x).
\]

On the other hand, we have
\begin{align*}
\int_{\{|y|<1/2\} \cup \{3/2<|y|\le 3/(2r)\}} |x-y|^{-(n+2\sigma)} h_r(y) \,\ud y
&\le Cr^{\frac{n-2\sigma}{2}}\max_{B_{3/2}} h\\
&\le Cr^{\frac{n-2\sigma}{2}}h(rx)\\
&\le Cw_r(x),
\end{align*}
and
\begin{align*}
\int_{\{|y|> 3/(2r)\}} |x-y|^{-(n+2\sigma)} h_r(y) \,\ud y&\le Cr^{\frac{n+2\sigma}{2}}\int_{\{|y|> 3/2\}} |rx-y|^{-(n+2\sigma)} h(y) \,\ud y\\
&\le Cr^{\frac{n+2\sigma}{2}} \quad\mbox{by the assumption \eqref{eq:assumptionsh}}\\
&\le Cr^{\frac{n+2\sigma}{2}} h(rx)\le  Cw_r(x).
\end{align*}
Hence
\[
\int_{\{|y|<1/2\} \cup \{|y|>3/2\}} |x-y|^{-(n+2\sigma)} h_r(y) \,\ud y\le  Cw_r(x).
\]
Therefore,
\begin{equation}\label{eq:harnackw}
\int_{\{|y|<1/2\} \cup \{|y|>3/2\}} |x-y|^{-(n+2\sigma)} w_r(y) \,\ud y\le  Cw_r(x).
\end{equation}
This finishes proving \eqref{eq:harnackpossiontail}.
\end{proof}

\begin{lem}\label{lem:poissonharnackV}
Let $u$ and $h$ be as in Proposition \ref{prop:zero},  $v$ be as in \eqref{eq:definitionofv} and $V$ be as in \eqref{eq:poissonextension}. Then for any $0<r<1/4$,
\[
\sup_{ \B_{5r/4}^+\setminus  \B_{3r/4}^+ } V \le C \inf_{ \B_{5r/4}^+\setminus  \B_{3r/4}^+ } V, 
\]
where $C>0$ is independent of $r$. 
\end{lem}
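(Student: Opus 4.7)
The plan is to rescale to a universal annulus and then decompose the Poisson integral into an ``essential'' piece and two ``tail'' pieces that can be absorbed. Set $\tilde V(X) := V(rX)$ and $\tilde v(y) := v(ry)$; by the scaling of the Poisson kernel $\mathcal P_\sigma$, one has $\tilde V = \mathcal P_\sigma * \tilde v$ on $\R^{n+1}_+$. Fix a universal reference point $x^* \in \R^n$ with $|x^*| = 1$. The lemma reduces to the two-sided bound
\[
c\,\tilde v(x^*) \;\le\; \tilde V(x,t) \;\le\; C\,\tilde v(x^*) \qquad \forall\,(x,t) \in \overline{\B_{5/4}^+ \setminus \B_{3/4}^+},
\]
with constants depending only on $n,\sigma$ and the constants in Lemmas \ref{lem:harnack} and \ref{lem:poissonharnacktail}, all of which are independent of $r$. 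The Harnack conclusion then follows by evaluating the sandwich at two different points.

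Decompose $\tilde V = W_A + W_B + W_C$, where $W_A, W_B, W_C$ denote the Poisson integrals of $\tilde v$ restricted to $A = \{|y| \le 1/2\}$, $B = \{1/2 \le |y| \le 3/2\}$, and $C = \{|y| \ge 3/2\}$, respectively. For the main term $W_B$, apply the rescaled version of Lemma \ref{lem:harnack} to obtain $\tilde v(y) \asymp \tilde v(x^*)$ uniformly for $y \in B$, with constants independent of $r$, so that
\[
W_B(x,t) \;\asymp\; \tilde v(x^*)\,J(x,t), \qquad J(x,t) := \int_B \mathcal P_\sigma(x-y,t)\,\ud y.
\]
Clearly $J \le 1$, and $J$ extends continuously to the compact closure $\overline{\B_{5/4}^+ \setminus \B_{3/4}^+}$, with $J(x,0) = 1$ for $|x| \in [3/4,5/4]$ (since $\mathcal P_\sigma$ is an approximate identity and $[3/4,5/4]$ lies strictly inside the open annulus $(1/2,3/2)$). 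Strict positivity of $J$ at every point of this compact set then yields $J(x,t) \ge c_0 > 0$ uniformly, so $W_B(x,t) \asymp \tilde v(x^*)$.

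For the tails $W_A$ and $W_C$, the key geometric observation is that for $(x,t) \in \overline{\B_{5/4}^+ \setminus \B_{3/4}^+}$ and $y \in A \cup C$, the Euclidean distance from $(x,t)$ to $(y,0)$ is bounded below by $1/4$ and, moreover, is comparable to $|x^* - y|$ (by the triangle inequality using $\sqrt{|x|^2+t^2} \in [3/4,5/4]$, $|x^*|=1$, and the separation $|y| \le 1/2$ or $|y| \ge 3/2$). Hence
\[
\mathcal P_\sigma(x-y,t) \;\le\; C\,t^{2\sigma}\,|x^* - y|^{-(n+2\sigma)},
\]
and Lemma \ref{lem:poissonharnacktail} applied at the point $x^*$ (which lies in $[3/4,5/4]$) gives
\[
W_A(x,t) + W_C(x,t) \;\le\; C t^{2\sigma} \int_{A \cup C} \frac{\tilde v(y)}{|x^* - y|^{n+2\sigma}}\,\ud y \;\le\; C\,\tilde v(x^*),
\]
since $t \le 5/4$. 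Combining with the estimate on $W_B$ establishes the sandwich and hence the lemma.

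The main obstacle is ensuring that the lower bound on $\tilde V$ is uniform all the way up to the flat boundary $t=0$, where both tails $W_A$ and $W_C$ vanish but $\tilde V(x,0) = \tilde v(x)$ survives through $W_B$. This is handled exactly by the compactness-plus-continuity argument for $J(x,t)$: one needs only the approximate identity property of $\mathcal P_\sigma$ together with the strict interior containment $[3/4,5/4] \subset (1/2,3/2)$ of the reference annulus inside the support of $W_B$.
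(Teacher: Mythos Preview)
Your proof is correct and follows essentially the same approach as the paper's. Both arguments rescale to a fixed annulus, split the Poisson integral into a ``near'' piece controlled by the boundary Harnack inequality (Lemma \ref{lem:harnack}) and ``tail'' pieces controlled by Lemma \ref{lem:poissonharnacktail}, and then combine via a sandwich with $\tilde v(x^*)$. The only cosmetic differences are that the paper uses a smooth cutoff $\eta$ supported in $[1/4,7/4]$ (two pieces) while you use a hard cutoff (three pieces), and you make explicit the compactness argument for the lower bound $J(x,t)\ge c_0$ that the paper leaves implicit.
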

\begin{proof}
We shall use some arguments from \cite{X}.    

Let $W_r(X)=r^{\frac{n-2\sigma}{2}}V(rX)$ and $w_r(X)=r^{\frac{n-2\sigma}{2}}v(rx)$. Let $\eta$ be a nonnegative smooth cut-off function such that $\eta\equiv 1$ in $[1/2,3/2]$ and is identically zero outside of $[1/4,7/4]$. By the definition of $V$, we have 
\begin{align*}
W_r(X)&= \beta(n,\sigma) \int_{\R^n} \frac{t^{2\sigma}}{(|x-y|^2+t^2)^{\frac{n+2\sigma}{2}}} \eta(|y|)w_r(y)\,\ud y \\& \quad +  \beta(n,\sigma) \int_{\{ |y|<1/2\} \cup \{ |y|>3/2\} } \frac{t^{2\sigma}}{(|x-y|^2+t^2)^{\frac{n+2\sigma}{2}}} (1-\eta(|y|))w_r(y)\,\ud y \\&
=:\Psi_1(X)+\Psi_2(X).
\end{align*}
Using Lemma \ref{lem:harnack}, we have 
\[
\sup_{ \B_{5/4}^+\setminus  \B_{3/4}^+ } \Psi_1 \le \sup_{B_{7/4}\setminus B_{1/4}}w_r\le C\inf_{B_{7/4}\setminus B_{1/4}}w_r\le C \inf_{ \B_{5/4}^+\setminus  \B_{3/4}^+ } \Psi_1.
\]
Direction calculations yield that
\[
\sup_{ \B_{5/4}^+\setminus  \B_{3/4}^+ } \frac{\Psi_2}{t^{2\sigma}} \le C \inf_{ \B_{5/4}^+\setminus  \B_{3/4}^+ } \frac{\Psi_2}{t^{2\sigma}}.
\] 
By Step 1, we have $ \inf_{ \B_{5/4}^+\setminus  \B_{3/4}^+ } \frac{\Psi_2}{t^{2\sigma}} \le w_r(x)$ for any $|x|=1$. Using Lemma \ref{lem:harnack} again, we have
\[
\sup_{ \B_{5/4}^+\setminus  \B_{3/4}^+ } \Psi_2\le C\inf_{ B_{3/2}\setminus  B_{1/2} } w_r\le C \inf_{ \B_{5/4}^+\setminus  \B_{3/4}^+ } W_r.
\]
Hence, we conclude that 
\[
\sup_{ \B_{5/4}^+\setminus  \B_{3/4}^+ } W_r \le C \inf_{ \B_{5/4}^+\setminus  \B_{3/4}^+ } W_r.
\] 
Scaling back, we proved the Harnack inequality of $V$.  
\end{proof}

Moreover, using \eqref{eq:harnackw} and direct calculations, we have for $(x,t)\in \B_{5/4}^+\setminus  \B_{3/4}^+ $,
\begin{align*}
|\nabla_t^\ell  \nabla_x^k\Psi_2(x,t)|&\le C t^{2\sigma-\ell}\quad\mbox{for all }k\ge 1, 1\le \ell<2\sigma.
\end{align*}
Following the above proof for that the left hand sides of \eqref{eq:extensionboundaryzero} and \eqref{eq:extensionneumann} are continuous on $\overline{\mathcal{B}_{R}^+\setminus \mathcal B_r^+} $, one can show that
\[
|\nabla^{k}_{x,t}\Psi_1(x,t)|+|t^b\partial_t \Delta_b^m \Psi_1(x,t)|\le C,\quad k=0,\cdots, 2m.
\]
Hence, for $(x,t)\in \B_{5/4}^+\setminus  \B_{3/4}^+ $,
\[
|\nabla^{k}_{x,t}W_r(x,t)|+|t^b\partial_t \Delta_b^m W_r(x,t)|\le C,\quad k=0,\cdots, 2m.
\]

\begin{lem} \label{lem:estimatepohozaev}
Let $u$ and $h$ be as in Proposition \ref{prop:zero},  $v$ be as in \eqref{eq:definitionofv} and 
\begin{equation}\label{eq:extendvarphi}
\varphi_i(y)=\frac{v(r_i y)}{u(r_i e_n)}.
\end{equation}
 Let $V= \mathcal{P}*v$ and $\Phi_i= \mathcal{P}*\varphi_i$  Then
\begin{align}
\lim_{i\to\infty} P(v,r_i)&=0,\label{eq:appendixtends0}\\
\Phi_i&\to \frac{1}{2|X|^{n-2\sigma}}+\frac 12, \label{eq:appendixfundamental}\\
\lim_{i\to\infty} \int_{\pa'' \B_1^+} \widetilde Q_m(\Phi_i) &= \int_{\pa'' \B_1^+} \widetilde Q_m\left(\frac{1}{2|X|^{n-2\sigma}}+\frac 12\right), \label{eq:appendixlimit}
\end{align}
where
\begin{equation}\label{eq:tildeQ}
\int_{\pa'' \B_r^+} \widetilde Q_m(U) =\frac{n-2\sigma}{2} \int_{\pa'' \B_r^+} t^b\pa_\nu \Delta_b^m U U +r\int_{\pa'' \B_r^+} t^b\pa_\nu \Delta_b^m U \partial_\nu U+\int_{\pa'' \B_r^+} Q_m(U). 
\end{equation}
\end{lem}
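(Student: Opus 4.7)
\bigskip

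\noindent\textbf{Proof plan.} The plan is to establish \eqref{eq:appendixfundamental} first, deduce \eqref{eq:appendixlimit} from it by uniform derivative estimates on $\Phi_i$, and then obtain \eqref{eq:appendixtends0} by direct rescaling. The basic identity is $V(r_iX)=u(r_ie_n)\Phi_i(X)$, which follows from \eqref{eq:poissonextension} after the substitution $y=r_iw$, and it converts each term of $P(v,r_i)$ into a prefactor $(r_i^{(n-2\sigma)/2}u(r_ie_n))^2=o(1)$ (or a higher power thereof, for the last term) times an integral that will be shown to be bounded uniformly in $i$.

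For \eqref{eq:appendixfundamental} we start from \eqref{eq:converg2}, which gives $\varphi_i\to\bar\varphi:=\tfrac{1}{2|y|^{n-2\sigma}}+\tfrac12$ in $C^k_{loc}(\R^n\setminus\{0\})$ for every $k$. The Poisson extension of $\bar\varphi$ is $\bar\Phi(X):=\tfrac12|X|^{2\sigma-n}+\tfrac12$: the constant $\tfrac12$ is preserved because $\mathcal{P}_\sigma$ is a probability kernel, while a direct computation using $\Delta_b r^\alpha=\alpha(\alpha+n+b-1)r^{\alpha-2}$ for radial functions in $\R^{n+1}$ (with $b+2\sigma-1=2m$) shows that $|X|^{2\sigma-n}$ is $\Delta_b^{m+1}$-harmonic on $\R^{n+1}_+$, has vanishing Neumann traces \eqref{eq:extensionboundaryzero}--\eqref{eq:extensionneumann} away from the origin, and restricts to $|x|^{2\sigma-n}$ on $\{t=0\}$, so by uniqueness of the Poisson extension it equals $\mathcal{P}_\sigma*|y|^{2\sigma-n}$. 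To pass to the limit in $\Phi_i$, we split the Poisson integral into a piece over $\{1/4<|y|<2\}$ and a complementary tail. The interior piece converges locally uniformly on $\overline{\B_{5/4}^+\setminus\B_{3/4}^+}$ by the local uniform convergence of $\varphi_i$. Dividing Lemma \ref{lem:poissonharnacktail} through by $u(r_ie_n)$ yields the uniform tail bound
\[
\int_{\{|y|<1/2\}\cup\{|y|>3/2\}}|x-y|^{-(n+2\sigma)}\varphi_i(y)\,\ud y\le C\varphi_i(x)\le C\qquad(3/4\le|x|\le 5/4),
\]
which combined with the pointwise estimate $\mathcal{P}_\sigma(X-y)\le Ct^{2\sigma}|x-y|^{-(n+2\sigma)}$ for $y$ far from $x$ localizes the tail to an arbitrarily small contribution and gives $\Phi_i\to\bar\Phi$ uniformly on $\overline{\B_{5/4}^+\setminus\B_{3/4}^+}$.

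For \eqref{eq:appendixlimit} we need the uniform-in-$i$ bounds
\[
|\nabla^k_{x,t}\Phi_i(X)|+|t^b\pa_t\Delta_b^m\Phi_i(X)|\le C,\qquad X\in\B_{5/4}^+\setminus\B_{3/4}^+,\ k=0,\ldots,2m.
\]
These follow by rerunning the argument of Lemma \ref{lem:poissonharnackV} and the comments after it \emph{verbatim} for $\Phi_i$ in place of $V$, the key input now being the uniform tail bound just derived in place of \eqref{eq:harnackw}. Since each integrand in $\widetilde Q_m$ is a product of such derivatives of $\Phi_i$ paired against the weight $t^b$, and $t^b$ is integrable on $\pa''\B_1^+$ because $b>-1$ (i.e., $\sigma-m<1$), Arzel\`a-Ascoli plus the pointwise identification $\Phi_i\to\bar\Phi$ from the previous step yields \eqref{eq:appendixlimit}.

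Finally, \eqref{eq:appendixtends0} follows from the change of variables $Y=r_iX$ on $\pa''\B_{r_i}^+$ together with $\Delta_{b,Y}^kV(r_iX)=r_i^{-2k}u(r_ie_n)\Delta_{b,X}^k\Phi_i(X)$, $\pa_{\nu_Y}=r_i^{-1}\pa_{\nu_X}$, $t^b=r_i^bs^b$, and $\ud S_Y=r_i^n\ud S_X$. A direct bookkeeping shows that each of the three boundary integrals in \eqref{eq:pohozaevnoninteger} scales as $r_i^{n+b-2m-1}u(r_ie_n)^2=(r_i^{(n-2\sigma)/2}u(r_ie_n))^2$ times the corresponding integrand in $\widetilde Q_m(\Phi_i)$ on $\pa''\B_1^+$, while the last term scales as $(r_i^{(n-2\sigma)/2}u(r_ie_n))^{\frac{2n}{n-2\sigma}}\int_{\pa B_1}\varphi_i^{\frac{2n}{n-2\sigma}}$. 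Since $r_i^{(n-2\sigma)/2}u(r_ie_n)\to 0$ and both remaining integrals are uniformly bounded (by \eqref{eq:appendixlimit} and by the uniform convergence of $\varphi_i$ on the compact set $\pa B_1$), we conclude $P(v,r_i)\to 0$. The principal technical obstacle I anticipate is handling the weighted derivative $t^b\pa_t\Delta_b^m\Phi_i$ near the equator $\pa\B_1\cap\{t=0\}$, since $b$ can be negative when $\sigma-m>1/2$; one cannot simply invoke $C^1$ convergence. The Caffarelli-Silvestre-Yang extension regularity for $\Delta_b^{m+1}$ cited in the discussion preceding Proposition \ref{prop:pohozaev22} and applied in Proposition \ref{lem:cut-off2} is precisely what supplies the weighted continuity of the $\widetilde Q_m$-integrands up to the equator that is needed to justify the interchange of limit and integration.
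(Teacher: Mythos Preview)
Your overall strategy---split the Poisson integral, use the tail Harnack Lemma~\ref{lem:poissonharnacktail}, derive uniform weighted derivative bounds from the analysis following Lemma~\ref{lem:poissonharnackV}, and then rescale $P(v,r_i)$---matches the paper's proof closely, and your scaling bookkeeping for \eqref{eq:appendixtends0} is correct. There are, however, two genuine soft spots in your treatment of \eqref{eq:appendixfundamental}.

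\textbf{The tail does not become ``arbitrarily small''.} Dividing Lemma~\ref{lem:poissonharnacktail} by $u(r_ie_n)$ yields only a \emph{uniform bound} on $\int_{\{|y|<1/2\}\cup\{|y|>3/2\}}|x-y|^{-(n+2\sigma)}\varphi_i(y)\,\ud y$; combined with $\mathcal P_\sigma(X-y)\le Ct^{2\sigma}|x-y|^{-(n+2\sigma)}$ this gives the Poisson tail is $O(t^{2\sigma})$, which is \emph{not} small on the portion of $\B_{5/4}^+\setminus\B_{3/4}^+$ where $t$ is of order one. So a three-$\epsilon$ argument of the type you sketch does not close. The paper handles this differently: it splits at $\{\va\le|y|\le R\}$, uses the uniform $C^1$ bound on the complementary piece to pass to a subsequential limit $f_{\va,R}(X)$, shows via the kernel-ratio argument (as in the proof of \eqref{eq:converg2}) that $\lim_{\va\to0,\,R\to\infty}f_{\va,R}$ is a \emph{constant} $c_0$, and finally forces $c_0=0$ by matching $\Phi_i(x,0)=\varphi_i(x)\to\bar\varphi(x)$ on $\pa'\B_1^+$. (The inner contribution is controlled via the pointwise bound $\varphi_i(y)\le C|y|^{2\sigma-n}$ for $|y|\le 1/2$, extracted from the proof of \eqref{eq:converg2}, which you do not invoke.) Your outline needs either this constant-identification step or a genuine uniform-smallness argument for the far tail, which the single-scale Harnack estimate does not supply.

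\textbf{The identity $\mathcal P_\sigma*|y|^{2\sigma-n}=|X|^{2\sigma-n}$.} Your appeal to ``uniqueness of the Poisson extension'' is formal: $|y|^{2\sigma-n}$ is singular at the origin and does not lie in the function classes for which the extension theory of \cite{CS2,Y} is stated, so uniqueness requires justification. The paper instead computes the convolution directly (see \eqref{eq:fundamentaltobubble}) by recognizing $(1+|y|^2)^{-(n+2\sigma)/2}$ as the critical power of the standard bubble and using that $\bar u_1$ solves the integral equation; this is a two-line calculation and avoids any uniqueness issue.
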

\begin{proof} 
We will do the case for $\sigma>1$, since the above analysis for $\sigma\in (0,1)$ has been done in \cite{CJSX}.

 Choosing $r=r_i$, then we have
\[
\sup_{ \B_{5/4}^+\setminus  \B_{3/4}^+ } \Phi_i \le C \inf_{ \B_{5/4}^+\setminus  \B_{3/4}^+ } \Phi_i.
\]
and $\Phi_i$ is locally uniformly bounded, and $(x,t)\in \B_{5/4}^+\setminus  \B_{3/4}^+ $ that
\begin{equation}\label{eq:localestimateWr}
|D^{k}_{x,t}\Phi_i(x,t)|+|t^b\partial_t \Delta_b^m \Phi_i(x,t)|\le C,\quad k=0,\cdots, 2m.
\end{equation}
Hence for all $|(x,t)|=r_i, k=1,2,\cdots,2m$,
\begin{align*}
|D^k_{x,t} V(x,t)|&\le C r_i^{-k} u(r_ie_n)=o(1)r_i^{-\frac{n-2\sigma}{2}-k}\\
|t^b\partial_t \Delta_b^m V(x,t)|&\le C r_i^{-2m-1+b} u(r_ie_n)=o(1)r_i^{-\frac{n-2\sigma}{2}-2m-1+b}.
\end{align*}
This estimate will imply \eqref{eq:appendixtends0}.

We arbitrarily fix $0<r_1<r_2$, and we prove \eqref{eq:appendixfundamental} for $r_1\le |x|\le r_2$, $t>0$. From the proof of \eqref{eq:converg2}, we see that $\varphi_j(x)\le C|x|^{2\sigma-n}$ for $|x|\le 1/2$, where $C$ is independent of $j$. We estimate as follows:
\begin{align*}
\Phi_i&=\beta(n,\sigma) \int_{\R^n} \frac{t^{2\sigma}}{(|x-y|^2+t^2)^{\frac{n+2\sigma}{2}}}\varphi_i(y)\,\ud y\\
&=\beta(n,\sigma) \int_{\va\le |y|\le R} \frac{t^{2\sigma}}{(|x-y|^2+t^2)^{\frac{n+2\sigma}{2}}}\varphi_i(y)\,\ud y\\
&\quad+\beta(n,\sigma) \int_{\{|y|<\va\}\cup\{|y|>R\}} \frac{t^{2\sigma}}{(|x-y|^2+t^2)^{\frac{n+2\sigma}{2}}}\varphi_i(y)\,\ud y\\
&\quad=:I_i(x,t)+II_i(x,t),
\end{align*}
where $\va<r_1$ is small and $R>r_2$ is large. It follows from  \eqref{eq:converg2}  that
\[
I_i(x,t)\to \beta(n,\sigma) \int_{\va\le |y|\le R} \frac{t^{2\sigma}}{(|x-y|^2+t^2)^{\frac{n+2\sigma}{2}}}\left(\frac{1}{2|y|^{n-2\sigma}}+\frac 12\right)\,\ud y.
\]
Since $II_i$ is bounded in $C^1(\B^+_{r_2}\setminus\B^+_{r_1})$, we know that $II_i(x)\to f_{\va,R}(x)$ for some function $f$. Then as in the proof of \eqref{eq:converg2}, one can show that $\lim_{\va\to 0, R\to \infty}f(x)$ is a constant function. Therefore,
\begin{align*}
\Phi_i&\to  \beta(n,\sigma) \int_{\R^n} \frac{t^{2\sigma}}{(|x-y|^2+t^2)^{\frac{n+2\sigma}{2}}}\left(\frac{1}{2|y|^{n-2\sigma}}+\frac 12\right)\,\ud y+c_0\\
&=\frac{1}{2|X|^{n-2\sigma}}+\frac 12  +c_0,
\end{align*}
for some constant $c_0$. Here, we used the following identity
\begin{equation}\label{eq:fundamentaltobubble}
\beta(n,\sigma) \int_{\R^n} \frac{t^{2\sigma}}{(|x-y|^2+t^2)^{\frac{n+2\sigma}{2}}}\frac{1}{|y|^{n-2\sigma}}\,\ud y=\frac{1}{|X|^{n-2\sigma}}
\end{equation}
of which the proof is postponed to the end. Since $\Phi_i$ is uniformly bounded in $C^1 (\overline{\mathcal{B}_2^+}\setminus \mathcal{B}_{1/2}^+)$, and $\varphi_i(x)\to \frac{1}{2|x|^{n-2\sigma}}+\frac 12$, we have that $c_0=0$ and
\[
\Phi_i\to \frac{1}{2|X|^{n-2\sigma}}+\frac 12 \quad\mbox{in  } C^1 (\overline{\mathcal{B}_2^+}\setminus \mathcal{B}_{1/2}^+). 
\]
By \eqref{eq:localestimateWr}, one can show that $\Phi_i\to 1/(2|X|^{n-2\sigma})+1/2$ in the needed norms to guarantee that \eqref{eq:appendixlimit} holds.

Now we are left to prove \eqref{eq:fundamentaltobubble}. We have
\begin{align*}
&\beta(n,\sigma) \int_{\R^n} \frac{t^{2\sigma}}{(|x-y|^2+t^2)^{\frac{n+2\sigma}{2}}}\frac{1}{|y|^{n-2\sigma}}\,\ud y\\
&=t^{2\sigma-n}\beta(n,\sigma) \int_{\R^n} \frac{1}{(|\frac{x}{t}-y|^2+1)^{\frac{n+2\sigma}{2}}}\frac{1}{|y|^{n-2\sigma}}\,\ud y\\
&= t^{2\sigma-n}\beta(n,\sigma) \int_{\R^n} \left(\frac{1}{(|y|^2+1)^{\frac{n-2\sigma}{2}}}\right)^{\frac{n+2\sigma}{n-2\sigma}}\frac{1}{|\frac{x}{t}-y|^{n-2\sigma}}\,\ud y\\
&= t^{2\sigma-n}\frac{1}{(|\frac{x}{t}|^2+1)^{\frac{n-2\sigma}{2}}}\\
&=\frac{1}{|X|^{n-2\sigma}},
\end{align*}
where in the third equality we used the fact that $\bar u_1$ defined in \eqref{eq:bubblelambda} is a solution of 
\[
u(x)=\beta(n,\sigma)\int_{\R^n}\frac{u(y)^{\frac{n+2\sigma}{n-2\sigma}}}{|x-y|^{n-2\sigma}}\,\ud y.
\]
\end{proof}

\end{appendices}

\small

\bigskip
\bigskip

\noindent T. Jin

\noindent Department of Mathematics, The Hong Kong University of Science and Technology\\
Clear Water Bay, Kowloon, Hong Kong\\[1mm]
Email: \textsf{tianlingjin@ust.hk}

\medskip

\noindent J. Xiong

\noindent School of Mathematical Sciences, Beijing Normal University\\
Beijing 100875, China\\[1mm]
Email: \textsf{jx@bnu.edu.cn}

\end{document}